\newtheorem{theorem}{Theorem}[section]
\newtheorem{definition}[theorem]{Definition}
\newtheorem{lemma}[theorem]{Lemma}
\newtheorem{cor}[theorem]{Corollary}
\newtheorem{proposition}[theorem]{Proposition}
\newcommand{\adjective}{$\Delta_0$-}
\newcommand{\la}{\langle \, }
\newcommand{\ra}{\, \rangle}
\newcommand{\WO}{\mathbb{W}^{On}}
\newcommand{\al}{\alpha}
\newcommand{\be}{\beta}
\newcommand{\Gn}[1]{\ulcorner #1 \urcorner}
\newcommand{\RC}{\textbf{RC}}
\newcommand{\FLE}{\mathbb{F}_{\varepsilon_0}}
\newcommand{\FLEfin}{\texttt{Form}^0_{\mathcal{L}_{\omega}}}
\newcommand{\EA}{\text{EA}}
\newcommand{\TPr}{\textbf{\foreignlanguage{russian}{TS}C}}
\newcommand{\con}{\text{Con}}
\newcommand{\Cn}[2]{\underline{{\con}_{#1}({#2})}}
\newcommand{\gl}{\textbf{GL}}
\newcommand{\glp}{\textbf{GLP}}
\newcommand{\Th}{\sf Th}
\newcommand{\n}{\la n \ra}
\newcommand{\m}{\la m \ra}
\newcommand{\T}[1]{{\sf Th}_{#1}}
\newcommand{\suc}{\text{succ}}
\newcommand{\MNF}[2]{\la n_{#1}^{\al_{#1}} \ra \top \land \ldots \land \la n_{#2}^{\al_{#2}} \ra \top }
\begin{document}

\title{The logic of Turing progressions}

\author{Eduardo Hermo Reyes \footnote{\href{mailto:ehermo.reyes@ub.edu}{ehermo.reyes@ub.edu} } \\ Joost J. Joosten \footnote{\href{mailto:jjoosten@ub.edu}{jjoosten@ub.edu}} \\ {\small University of Barcelona}}
\maketitle

\begin{abstract}
Turing progressions arise by iteratedly adding consistency statements to a base theory. Different notions of consistency give rise to different Turing progressions. In this paper we present a logic that generates exactly all relations that hold between these different Turing progressions given a particular set of natural consistency notions. Thus, the presented logic is proven to arithmetically sound and complete for a natural interpretation, named the \emph{Formalized Turing progressions} (FTP) interpretation. 
\end{abstract}

\section{Introduction}

After G\"{o}del's incompleteness theorems \cite{Godel:1931}, we know that any consistent axiomatizable arithmetical theory, sufficiently rich, is incomplete. Moreover, by the second incompleteness theorem, we know that such a theory cannot prove the natural formalization of its own consistency. 

In other words, given a consistent arithmetical theory $T$ containing say, elementary arithmetic $\EA$, we have that $T \not \vdash \con(T)$ and therefore, extending $T$ by the consistency assertion $\con(T)$, we get a stronger theory. If we started with a sound theory $T$, the new system $T + \con(T)$ is affected once more by G\"{o}del's incompleteness theorems so it cannot prove its own consistency. Hence we can extend it by adding the corresponding consistency assertion obtaining a new, stronger, but again incomplete system. 

In 1936 Turing started working on his PhD thesis under the direction of Alonzo Church. The results of this work were published in 1939 under the title \emph{Systems of logic based on ordinals} \cite{Turing:1939:TuringProgressions}. Here, Turing introduced what are now known as Turing progressions which are hierarchies of theories that arise by iterating this process of adding consistency statements to a base theory, even to transfinite levels. Turing progressions are widely used in proof theory and can serve the purpose of gauging the proof-theoretic strength of mathematical theories that contain or interpret arithmetic. (See e.g.~\cite{Beklemishev:2004:ProvabilityAlgebrasAndOrdinals}.)

It is known that the standard propositional provability logic $\gl$ containing a modality $\Box$ to model the standard formalized provability predicate can be used to denote any finite Turing progression. The logic $\glp$ was introduced by Japaridze in \cite{Japaridze:1988:GLP} and contains a range of modalities $[n]$, one for each $n<\omega$, that can be interpreted as a sequence of ever increasing provability predicates. 

In \cite{Beklemishev:2004:ProvabilityAlgebrasAndOrdinals, Joosten:2014:TuringTaylor} it is shown that transfinite Turing progressions up to the ordinal $\varepsilon_0$ can be approximated using the polymodal provability logic $\glp$. In particular, \cite{Beklemishev:2004:ProvabilityAlgebrasAndOrdinals} showed how an ordinal analysis of Peano Arithmetic can be performed mainly within $\glp$.  

On the one hand, $\glp$ has many good algebraic properties and is very expressible in that, for example, its closed fragment constitutes for an alternative ordinal notation system. On the other hand, the link from $\glp$ to Turing progressions is only by approximating the progression so that applications of $\glp$ require many technical results. 

In this paper a first investigation is undertaken to see if there are logics which are still expressible and have good algebraic properties but which can be used to directly denote Turing progressions rather than just approximating them. In this sense, our logic is inspired by what is called Reflection Calculus $\RC$ (\cite{Beklemishev:2013:PositiveProvabilityLogic}, \cite{Beklemishev:2012:CalibratingProvabilityLogic}, \cite{Dashkov:2012:PositiveFragment}).
Like with $\RC$, we shall focus on strictly positive formulas which are formulas that do not contain disjunction nor implication. \\

The arrangement of the most important content of the current paper is as follows: Section \ref{Tprog} introduces some basic arithemtical definitions such as $n$-consistency and \emph{\adjective~presented theories}. In that section we shall also discuss the notion of \emph{smooth Turing progressions} and how they are presented within arithmetic. 

In Section \ref{signature} we introduce a propositional modal language using \emph{ordinal modalities}, which are modalities of the form $\la n^\al  \ra$ where $\al \in \varepsilon_0$ and $n \in \omega$ (named \emph{exponent} and \emph{base}, respectively). We will also establish a way to interpret our modal formulae as (possibly infinitely axiomatized) theories that contain a sufficient amount of arithmetic. This way, the intended reading of $\la n^\al \ra \varphi$ will be the  $n$-consistency $\al$ times iterated over the arithmetical interpretation of $\varphi$.

In Section \ref{TPr} we will introduce the system $\TPr$ (for \textbf{T}uring-\textbf{S}chmerl \textbf{C}alculus). The derivable objects of this system are \emph{sequents}, i.e., expressions of the form $\varphi \vdash \psi$ whose intended interpretation is to express the entailment between the theories denoted by the modal formulas $\varphi$ and $\psi$. First rank inhabitants of $\TPr$ are the so-called \emph{monomial normal forms} ({\sf MNF}'s for short) and it is shown that each formula is equivalent to such a {\sf MNF}.

In Section \ref{arithint}, we will see how to fix the interpretation for these sequents within $\EA^+$ and we prove the soundness of $\TPr$ with respect to this interpretation. In Section \ref{boundCompl}, we introduce some conservativity results and a characterization for the derivability between {\sf MNF}'s. With these results we prove the completeness of the system.

\section{Turing progressions} \label{Tprog}

We will focus on theories that contain a $\Pi^0_1$ formulation of $\EA^+$. By doing so, the logical complexity of our theories can be low yet various arguments requiring cut-elimination can be formalized. Here, $\EA^+$ is Robinson's arithmetic $\text{Q}$ together with induction for bounded formulas. Note that $\EA^+$ in our formulation has a function symbol for the super-exponential function $x\mapsto 2^x_x$ defined by $2^x_0 := x$ and $2^x_{y+1}:=2^{2^x_y}$.

\subsection{Arithmetical preliminaries}

Since we will be dealing with formalized syntax it is very important to really understand how we represent theories. We will actually identify a theory with the formula that defines the set of G\"odel numbers of its axioms. However, we do not want the complexity of this defining formula to be too high. Therefore we only consider so-called \emph{\adjective presented theories}:

\begin{definition}[\adjective presented theories]
A  theory $U$ is \adjective presented iff there is a bounded arithmetical formula $\sigma (x)$ that defines the sets of G\"{o}del numbers coding the axioms of $U$ in the standard model of arithmetic. By $\epsilon (x)$ we denote the canonical presentation of $\EA^+$.
\end{definition}

In the remainder of this paper we will consider \adjective~presented theories although many results presented here can be proven in a more general setting. Note that since supexp is in our language, our notion of \adjective~presentable does not coincide with the more standard one from the literature where one considers theories whose set of axioms are definable via an \emph{elementary} formula (a $\Delta_0$ formula that does not contain supexp and where only a function symbol for exponentiation is allowed). Via Craig's trick, we think that this difference is not too important.

As usual, by $\Box_T (x)$ we denote $\exists y \, \text{Prf}_T (y, x)$; an arithmetical $\Sigma_1$-formula expressing that there is $y$ such that $y$ codes a proof in $T$ of a formula with code $x$. By $\overline{n}$ we denote the \emph{numeral} $S(S( \ldots S(0) \ldots ))$  ($n$ times). We will use just $n$ when it cannot be mislead with a variable. Often, we write $\Box_T (\varphi (\dot{x}) )$ instead of $\Box_T (\Gn{\varphi (\dot{x})})$, where $\Gn{\varphi(\dot{x_1}, \ldots , \dot{x_j})}$ denotes the map sending $n_1 , \ldots , n_k$ to the G\"{o}del number $\Gn{\varphi(\overline{n_1}, \ldots , \overline{n_j})}$. 

\begin{definition}[$n$-consistency]
A theory $U$ is called $n$-consistent if $U$ together with the set of all true $\Pi_{n}$-sentences is consistent. More formally:
\[
{\sf Cons}_n (U) \ := \ \forall \pi \ (\sf {Tr}_{n}(\pi) \to \neg \Box_U \neg \pi).
\]
Here, $\sf {Tr}_{n}$ is the standard $\Pi_{n}$-truth definition for $\Pi_{n}$-formulas.
\end{definition}

The $n$-consistency of the theory $U$ can equivalently (in $\EA^+$) be expressed (see e.g.~\cite{Beklemishev:2005:Survey}) by the arithmetical formula:
\begin{equation}
\con_n (U):= \hspace{0.25cm} \forall x \in \Pi_{n+1}\ (\Box_U (x) \rightarrow \sf {Tr}_{n+1}(x))
\end{equation}
where $x \in \Pi_{n+1}$ expresses that $x$ is the G\"{o}del number of a $\Pi_{n+1}$ sentence. Notice that the arithmetical complexity of $\con_n(U)$ is $\Pi_{n+1}$.

Turing progressions iterate consistency along a well-order. Thus to speak about Turing progression within arithmetical theories we first need to settle on how to represent linear orders and well-orders within such theories.

\begin{definition}[Elementary linear ordering, elementary well-order]
A pair $(D, \prec)$ is an elementary linear ordering iff both $D$ and $\prec$ are elementary defined, $D \subseteq \mathbb{N}$, $\prec \ \subseteq D^2$ and $\EA^+$ proves that $\prec$ linearly orders $D$. An elementary linear ordering is a well-order if it is well-founded in the standard model.
\end{definition}

For the remainder of this paper we shall furthermore assume that our elementary orders are nice in that the basic properties of and operations on the elements of the ordering are available in $\EA^+$. As such we can view the elements as ordinals, we can distinguish successor ordinals from limit ordinals and perform the basic operations like addition, multiplication and exponentiation.

\subsection{Smooth Turing progressions}

\emph{Turing progressions} are hierarchies of theories such that, given an initial theory $T$, we can construct a transfinite sequence of extensions of $T$ by iteratedly adding $n$-consistency statements. These progressions can be defined according to the following conditions below. Let us assume that we have fixed some recursive limit ordinal $\Lambda$ together with a natural ordinal notation system for $\Lambda$ that is elementary presented. Turing progressions are essentially defined by the following three clauses.

\begin{enumerate}[T1.]
\item $(T)^0_n := T$ where $T$ is an initial or base theory;

\item $(T)^{\al+1}_n := (T)^\al_n \cup \{\,  \con_n \big( \, (T)^\al_n \, \big) \, \}$;

\item $(T)^\lambda_n := \bigcup_{\be < \lambda} \, (T)^\be_n$,  \ for $\lambda$ a limit ordinal not exceeding $\Lambda$.

\end{enumerate}

There are various ways one can define/present a series of theories that satisfy these three clauses. Here we will consider \emph{smooth Turing progressions}, studied by Beklemishev in among others \cite{Beklemishev:2005:Survey} and \cite{Beklemishev:1991:ProvabilityLogicsForNaturalTuringProgressions}. We give a slightly different presentation.

Suppose we are given some elementary well-ordering $(D, \prec )$ and an initial theory $T$. The conditions T1-T3 can be reformulated by the unique following clause:

$$(T)^\al_n := T \cup \{ \, \con_n \big( \, (T)^\be_n \, \big) : \ \prec(\be , \al) ,  \ \be \in D \, \} \ \ \ \forall \al \in D, \  n < \omega$$.


Note that there is a seeming circularity in this definition. Of course, by means of the fixpoint theorem we can find a formalization of the definition within arithmetic.

We say that $\tau_n^{\sigma(z)} (\al, x)$ enumerates the $\al$-th theory of a progression based on iteration of $n$-consistency along $(D, \prec)$ with base $\sigma(z)$ if:
\[
\begin{array}{ll}
EA^+ \vdash \tau_n^{\sigma (z)} (\al, x) \ \leftrightarrow \ \Big( & \big(\, \epsilon (x) \vee \sigma (x) \, \big) \  \vee \\
\ &\exists \be \ \big(\prec (\be , \al) \ \wedge \ x = \Gn {\con_n (\tau_n^{\sigma (z)} (\dot{\beta}, y))} \ \big)\ \Big).
\end{array}
\]

Recall that by our reading conventions we have that $\con_n (\tau_n^{\sigma (z)} (\dot{\beta}, y))$ denotes the $n$-consistency of the theory axiomatized by $\tau_n^{\sigma (z)} (\dot{\beta}, y)$.
Since the equivalence does not directly refer to $\con_n (\tau_n^{\sigma (z)} (\dot{\beta}, y))$ but rather to its G\"odel number, the existence of such $\tau_n^{\sigma (z)} (\al, x)$ is guaranteed by the fixed point theorem. Furthermore, in \cite{Beklemishev:1995:LocalRefVSCon} and \cite{Beklemishev:2003:AnalysisIteratedReflection} it is shown that given an elementary presented theory $T$ and an elementary well-ordering $(D, \prec)$, there is a unique progression $(T)_n^\al$ for $n < \omega$ and $\al \in D$, modulo provable equivalence in $\EA^+$.

\section{A modal language for Turing progressions} \label{signature}

As we have pointed out, we shall work with a strictly positive propositional modal signature consisting of one constant symbol $\top$, one logical connective $\wedge$ and a set of modal connectives $\mathcal{M} := \{ \la n^\al \ra$ : $n < \omega$ and $\al < \varepsilon_0 \}$, named \emph{ordinal modalities}. The set of formulas in this language is defined as follows:

\begin{definition}
By $\FLE$ we denote the smallest set such that:

\begin{enumerate}[i)]
\itemsep0em
\item $\top \in \FLE$;
\item If $\varphi, \, \psi \in \FLE$ then $(\varphi \wedge \psi) \in  \FLE$;
\item If $\varphi \in \FLE, \ n<\omega$ and $\alpha < \varepsilon_0$ then $\la n^\alpha \ra \, \varphi \in \FLE$.
\end{enumerate}
\end{definition}

At the end of this section we shall see how these formulas can be associated to arithmetical theories in a natural way. To have some control over the nature of these theories, for any formula $\psi$ in the signature we define the following two functions returning the set of base and exponent elements respectively of any modality occurring in $\psi$. That is:s

\begin{definition}
Given a formula $\psi$, by ${\sf N\text{-}mod}(\psi)$ we denote the set of natural numbers corresponding to the base element in each modality that occurs in $\psi$, i.e.,
\begin{enumerate}[i)]
\item ${\sf N\text{-}mod}(\top) = \emptyset$;
\item ${\sf N\text{-}mod}(\varphi \, \wedge \, \psi) = {\sf N\text{-}mod}(\varphi) \, \cup \, {\sf N\text{-}mod}(\psi)$;
\item ${\sf N\text{-}mod}(\la n^\al \ra \, \psi) = \{ n \} \cup {\sf N\text{-}mod}(\psi)$.	
\end{enumerate}

Analogously, by ${\sf O\text{-}mod}(\psi)$ we denote the set of ordinals below $\varepsilon_0$ corresponding to the exponent element of each modality in $\psi$, i.e.,
\begin{enumerate}[i)]
\item ${\sf O\text{-}mod}(\top) = \emptyset$;
\item ${\sf O\text{-}mod}(\varphi \, \wedge \, \psi) = {\sf O\text{-}mod}(\varphi)\, \cup \, {\sf O\text{-}mod}(\psi)$;
\item ${\sf O\text{-}mod}(\la n^\al \ra \, \psi) = \{ \al \} \cup {\sf O\text{-}mod}(\psi).$
\end{enumerate}
\end{definition}
 
\subsection{Ordinal worms and restricted sets of formulas}

Within $\FLE$ we can find some special formulas, named \emph{Ordinal Worms} -OWs-, which are defined as follows:

\begin{definition}[Ordinal Worms, $\WO$]
The set of OWs denoted by $\WO$ is inductively defined as:
\begin{enumerate}[i)]
\itemsep0em
\item $\top \in \WO$;
\item If $B \in  \WO$, then $\la n^\alpha \ra \, B \in \WO$ for any $n < \omega$ and $\al < \varepsilon_0$ .
\end{enumerate}
\end{definition}

In order to control the complexity of the theories corresponding to our formulas, for any $n<\omega$ we define the set of formulas $\mathbb{F}_{<n}$ as follows:

\begin{definition} $\mathbb{F}_{<n}$ is the smallest set such that:
\begin{enumerate}[i)]
\item $\top \in \mathbb{F}_{<n}$;
\item if $\varphi, \, \psi \in \mathbb{F}_{<n}$ then $(\varphi \, \wedge \, \psi) \in \mathbb{F}_{<n}$;
\item if $\varphi \in \mathbb{F}_{<n}$ then $\la m^\be \ra \, \varphi \in \mathbb{F}_{<n}$ for and $m < n$ and $\be < \varepsilon_0$.
\end{enumerate}
\end{definition}

This last set of formulas will have some special relevance when trying to capture some conservativity results.\\

From now on, we let Greek letters $\al , \be ,\gamma , \ldots$ denote ordinals below $\varepsilon_0$, Greek lower case in the middle of the alphabet like $\varphi, \psi, \chi$ denote formulas, and capital latin letters $A, B, C, \ldots$ denote OWs. Also, given $A \in \WO$ of the form $\la n_0^{\al_0 }\ra \ldots \la n_k^{\al_k} \ra \top$, and $\varphi \in \FLE$ by $A\varphi$ we denote the formula $\psi$ obtained by the concatenation $\la n_0^{\al_0} \ra \ldots \la n_k^{\al_k} \ra \, \varphi$.

\subsection{Monomial and increasing normal forms}

In this subsection we consider two different kind of special formulas named \emph{monomial normal forms} and \emph{increasing normal forms}. Formulas in monomial normal form are used in the axiomatization of the calculus $\TPr$ as introduced in the next section and play an important role in this paper. 

Later we will show that for every formula $\psi$ in monomial normal form there is a unique equivalent (modulo $\TPr$) OW  $A$ in increasing normal form and viceversa. Moreover, we shall show that for any formula $\varphi$ in the above signature, there is a unique equivalent $\psi$ in monomial normal form. These uniqueness claims shall be proved after proving the arithmetical soundness.

\begin{definition}
The set {\sf M}  of \emph{monomials} is defined as the set of all OWs $A$ of length 1 i.e. ${\sf M} := \{\, \la n^\al \ra \top  \in \mathbb{W}^{On} : \text{ for some } n < \omega \text{ and } \al < \varepsilon_0 \, \}$.
\end{definition}

Monomial normal forms are conjuntions of monomials with an additional condition on the occuring exponents. In order to formulate this condition we first need to define the \emph{hyper-exponential} as studied in \cite{FernandezJoosten:2012:Hyperations}.

\begin{definition}[hyper-exponentiation]
The \emph{hyper-exponential} functions $e^n: \text{On} \rightarrow \text{On}$ where On denotes the class of ordinals, $e^0 $ is the identity function, $e^1: \al \mapsto -1+\omega^\al$ and $e^{n+m}= e^n \circ e^m$.

\end{definition} 
We will use $e$ to denote $e^1$. Note that for $\alpha$ not equal to zero we have that $e(\alpha)$ coincides with the regular ordinal exponentiation with base $\omega$; that is, $\alpha \mapsto \omega^\alpha$. However, it turns out that hyper-exponentials have the nicer algebraic properties in the context of provability logics.  With hyper-exponentiations at hand we can now define the notion that can be considered the corner-stone of our calculus.

\begin{definition}\label{definition:MNFs}
The set of formulas in \emph{monomial normal form}, {\sf MNF}, is inductively defined as follows:
\begin{enumerate}[i)]
\item $\top \in {\sf MNF}$;
\item If $A \in {\sf M}$ then $A \in {\sf MNF}$;
\item \label{item:TechnicalMNFclause}
\begin{tabbing}
	  If \hspace{0.2cm} \= $(a) \ $ \= $\la n_0^{\al_0} \ra \top \wedge \ldots \wedge \la n_k^{\al_k} \ra \top \in {\sf MNF}$; \\
 	  \	\\	     
	     \> $(b) \ $ \> \ $n < n_0$; \\
	  \ \\
	     \> $(c) \ $ \> \ $\al$ of the form $e^{n_0 - n}(\al_0) \cdot (2 + \be)$ for some $\be < \varepsilon_0$,
      \end{tabbing}
      \vspace{0.25cm}
then $\la n^\al \ra \top \wedge \la n_0^{\al_0} \ra \top \wedge \ldots \wedge \la n_k^{\al_k} \rangle \top \in {\sf MNF}.$
\end{enumerate}
\end{definition} 

Thus, monomial normal forms are conjunctions of monomials where the base of the modalities run in increasing order from left to right. The technical Item \ref{item:TechnicalMNFclause} is there to ensure that only monomials are included in the {\sf MNF} if they add new information to the {\sf MNF} that was not already implicit by one of the other monomials.

As we shall later see, there is another natural class of formulas that provides us with alternative normal forms. We define this set already here. 

\begin{definition}
The set of formulas in \emph{increasing normal form}, {\sf INF}, is inductively defined as follows:
\begin{enumerate}[i)]
	  \item $\top \in {\sf INF}$;
	  \item if $\la m^\be \ra A \in {\sf INF}$, $n < m$ and $0 < \al$, then $\la n^\al \ra \la m^\be \ra A \in {\sf INF}$.
\end{enumerate} 
\end{definition}

\subsection{Arithmetical interpretation of modal formulas}

Let us introduce now the arithmetical interpretation of our modal formulae in terms of the $\tau$-formulae previously presented. Notice that the conjunction of modal formulas is intended to mean the union of theories. Hence, we map the interpretation of modal conjunctions to the disjunction of the respective interpretations.

\begin{definition} \label{def}
An arithmetical interpretation for $\FLE$ is a map \\$* : \FLE \longrightarrow \texttt{Form}_{\mathcal{L}_\mathbb{N}}$ inductively defined as follows:
\begin{enumerate}
\itemsep0em
\item $(\top )^*(x)= \epsilon (x)$;
\item $(\varphi \, \wedge \, \psi)^*(x)= (\varphi)^*(x) \vee (\psi)^*(x)$
\item $(\la n^\al \ra \, \varphi)^*(x)= \tau_n^{\varphi^* (y)}( \al, x)$.
\end{enumerate}
\end{definition}

Note that since $\FLE$ has no propositional variables, we can identify a modal formula with its arithmetical interpretation unambiguously and so we will do in the remainder of the paper.

\begin{definition}
Given a numeration $\sigma$, by ${\sf Th}_\sigma$ we denote the theory of $\sigma$. We say that $\chi \in {\sf Th}_\sigma$ iff $\mathbb{N} \models \sigma (\Gn{\chi})$.
\end{definition}

For the sake of clarity, and since we are working in the close fragment, we will use the following notation: given $\varphi \in \FLE$ by ${\sf Th}_\varphi$ we denote ${\sf Th}_\sigma$ where $\varphi^*(x)=\sigma(x)$, following Definition \ref{def}. If $\varphi^*(x)= \epsilon (x)$ we use just $\EA^+$ instead of ${\sf Th}_\epsilon$. Also, if $\varphi := \la n^\al \ra \psi$ we write $({\sf Th}_\psi)_n^\al$ for the theory of $(\la n^\al \ra \psi)^*$.

\section{Turing-Schmerl calculus} \label{TPr}

In the previous Section we have defined an arithmetical interpretation that allows us to denote Turing progressions by modal formulas in $\FLE$. In this section we introduce the logic $\TPr$ in this modal language whose main goal is to express valid relations that hold between the corresponding Turing progressions.\\

We will use the following notation too: by $\varphi \equiv \psi$ we will denote that both $\varphi \vdash \psi$ and $\psi \vdash \varphi$ are derivable. Analogously, by $\varphi \equiv_n \psi$ we denote that for any formula $\chi \in \mathbb{F}_{<n +1}$, $\varphi \vdash \chi$ iff $\psi \vdash \chi$, i.e.,  $\varphi$ and $\psi$ share the same $\mathbb{F}_{<n +1}$ consequences. Also, by convention we take that for any $n$, $\la n^0 \ra \varphi$ is just $\varphi$.

\begin{definition} 
$\TPr$ is given by the following set of axioms and rules:\\

Axioms:
\begin{enumerate}

\item 
$\varphi \vdash \varphi, \ \ \ \varphi \vdash \top$;	

\item 
$\varphi \wedge \psi \vdash \varphi,  \ \ \ \varphi \wedge \psi \vdash \psi$;	\label{conjel}

\item \label{mon}
$\la n^\al \ra \varphi \vdash \la n^\be \ra \varphi$, \ \ \ for $\be \leq \al$;	
		
\item  \label{coadditive}
Co-additivity axioms: $\la n^{\al + \be} \ra \, \varphi \, \equiv\, \la n^\be \ra \la n^\al \ra \, \varphi$;	 

\item \label{omega}
$\la (m + n)^\al \ra \, \varphi \, \vdash \, \la m^{e^n (\al)} \ra \, \varphi$;	

\item \label{MS1}
Schmerl axioms:
\[
\begin{array}{ll}
\la n^\al \ra \, \big(\la n_0^{\al_0} \ra \top \wedge \ldots \wedge \la n_k^{\al_k} \rangle \top\big) \ \equiv & \la n^{e^{n_0 - n} (\al_0) \cdot (1 + \al)}  \ra \, \top \ \  \land \\
 & \la n_0^{\al_0} \ra \top \land \ldots \land \la n_k^{\al_k} \ra \top\\
\end{array}
\]
\ \\
 \ \ \ for $n < n_0$ and $\MNF{0}{k} \in {\sf MNF}$. 

\end{enumerate}
\

Rules:
\begin{enumerate}
\item \label{r:1}
If $\varphi \vdash {\psi}$ and ${\phi}\vdash{\chi}$, then $\varphi\vdash {\psi}\wedge{\chi}$;	

\item \label{r:2}
If $\varphi\vdash {\psi}$ and ${\psi}\vdash{\chi}$ then $\varphi\vdash{\chi}$;	

\item \label{r:3}
If $\varphi\vdash {\psi}$, then $\la n^\al \ra \varphi\vdash \la n^\al \ra {\psi}$ ;	

\item \label{r:4}
If $\varphi \vdash \psi$ then $\la n^\al \ra \varphi \ \land \ \la m^{\be + 1} \ra \psi \, \vdash \, \la n^\al \ra \, \Big( \, \varphi \ \land \ \la m^{\be + 1} \ra \psi \, \Big) \ \ \ \text{ for } m < n$. 
\end{enumerate}
\end{definition}

It is worth mentioning the special character of Axioms \ref{omega} and \ref{MS1} since both axioms are modal formulations of principles related to Schmerl's fine structure theorem, also known as \emph{Schmerl's formulas} (see \cite{Schmerl:1978:FineStructure} and \cite{Beklemishev:2003:AnalysisIteratedReflection}) as formulated in Proposition \ref{Schmerl} of this paper.

The following Proposition expresses some useful properties that can be easily proved.

\begin{proposition} For any $A \in \WO$ and $\varphi, \psi \in \FLE$ : \label{prop}
\begin{enumerate}
\item 
If $\varphi \vdash \psi$, then $A\varphi \vdash A\psi$; \label{prop1}

\item 
$A \varphi \vdash \varphi \land A$; \label{prop3}

\item 
$\la (m + n)^\al \ra \, \varphi \, \vdash \, \la m^\al \ra \, \varphi$; \label{prop4}

\item $\la n^\al \ra (\, \varphi \, \wedge \, \psi \, ) \ \wedge \ \la m^{\be + 1} \ra \varphi \equiv \la n^\al \ra ( \la m^{\be + 1} \ra \varphi \ \land \ \psi)$ for $m < n$; \label{prop5}

\item $\la n^\al \ra \varphi \, \land \, \psi \equiv \la n^\al \ra ( \varphi \, \land \, \psi )$ for $\psi \in \mathbb{F}_{< n}$ and ${\sf O}\text{-}{\sf mod}(\psi) \subset \suc$.  \label{prop6}

\end{enumerate}
\end{proposition}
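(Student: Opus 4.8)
The plan is to establish the five claims in the stated order, since each uses the previous ones, and to record two elementary facts first. From conjunction elimination (Axiom~\ref{conjel}) together with conjunction introduction (Rule~\ref{r:1}) one checks in the usual way that $\wedge$ is, modulo $\equiv$, a commutative, associative and idempotent operation with unit $\top$; I will use this ``semilattice bookkeeping'' silently. Second, instantiating the monotonicity Axiom~\ref{mon} with $\be = 0$ and using the convention $\la n^0 \ra \chi = \chi$ gives the reflection fact $\la n^\al \ra \chi \vdash \chi$ for every $\chi, n, \al$. Item~\ref{prop1} then follows by induction on the number of modalities of $A$: the base case $A = \top$ is the hypothesis, and for $A = \la n^\al \ra B$ the induction hypothesis $B\varphi \vdash B\psi$ is boxed by Rule~\ref{r:3}. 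For item~\ref{prop3} I prove $A\varphi \vdash A$ and $A\varphi \vdash \varphi$ separately and combine them with Rule~\ref{r:1}; the first follows from $\varphi \vdash \top$ via item~\ref{prop1} (as $A = A\top$), the second by peeling modalities off $A$ one at a time using the reflection fact and transitivity (Rule~\ref{r:2}).

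Item~\ref{prop4} is a two-step chain: Axiom~\ref{omega} gives $\la (m+n)^\al \ra \varphi \vdash \la m^{e^n(\al)} \ra \varphi$, and since the hyper-exponential is inflationary ($\al \leq e^n(\al)$, immediate from $e(\gamma) = -1+\omega^\gamma \geq \gamma$ and iteration) the monotonicity Axiom~\ref{mon} gives $\la m^{e^n(\al)} \ra \varphi \vdash \la m^\al \ra \varphi$; transitivity finishes it. For item~\ref{prop5} I argue both directions. Left to right: Rule~\ref{r:4} applied to $\varphi \wedge \psi \vdash \varphi$ yields $\la n^\al \ra (\varphi \wedge \psi) \wedge \la m^{\be+1} \ra \varphi \vdash \la n^\al \ra\big((\varphi \wedge \psi) \wedge \la m^{\be+1} \ra \varphi\big)$, and Rule~\ref{r:3} rewrites the boxed formula using the provable equivalence $(\varphi \wedge \psi) \wedge \la m^{\be+1} \ra \varphi \equiv \la m^{\be+1} \ra \varphi \wedge \psi$. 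Right to left: the left-hand side of the target is recovered by proving its two conjuncts, namely $\la n^\al \ra(\varphi \wedge \psi)$ (push $\la m^{\be+1}\ra\varphi \wedge \psi \vdash \varphi \wedge \psi$ under the box with Rule~\ref{r:3}) and $\la m^{\be+1}\ra\varphi$ (Rule~\ref{r:3} on conjunction elimination followed by the reflection fact), and then applying Rule~\ref{r:1}. The hypothesis $m<n$ is used only to license Rule~\ref{r:4}.

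Item~\ref{prop6} splits into an easy and a hard direction. The inclusion $\la n^\al \ra (\varphi \wedge \psi) \vdash \la n^\al \ra \varphi \wedge \psi$ holds for every $\psi$ (from conjunction elimination, the reflection fact, and Rules~\ref{r:3} and~\ref{r:1}) and uses neither hypothesis on $\psi$. For the converse I would induct on the build-up of $\psi$ as an element of $\mathbb{F}_{<n}$, strengthening the statement to range over all $\varphi$. The case $\psi = \top$ is immediate; for $\psi = \psi_1 \wedge \psi_2$ one applies the induction hypothesis first to $\psi_1$ with formula $\varphi$ and then to $\psi_2$ with formula $\varphi \wedge \psi_1$.

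The decisive case is $\psi = \la m^{\gamma+1} \ra \chi$. Here I first use the induction hypothesis on the proper subformula $\chi$ to get $\la n^\al \ra \varphi \wedge \chi \vdash \la n^\al \ra (\varphi \wedge \chi)$, which lets me move $\chi$ inside the outer box; concretely, from $\la n^\al \ra \varphi \wedge \la m^{\gamma+1} \ra \chi$ I derive $\la n^\al \ra(\chi \wedge \varphi) \wedge \la m^{\gamma+1} \ra \chi$ and then invoke item~\ref{prop5} (with its two formulas instantiated to $\chi$ and $\varphi$) to merge the modality in, obtaining $\la n^\al \ra(\la m^{\gamma+1}\ra\chi \wedge \varphi) \equiv \la n^\al \ra(\varphi \wedge \la m^{\gamma+1}\ra\chi)$. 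This is precisely where both hypotheses on $\psi$ are consumed: ${\sf O}\text{-}{\sf mod}(\psi) \subset \suc$ forces the exponent into the successor shape $\gamma+1$ demanded by Rule~\ref{r:4} (hence by item~\ref{prop5}), and $\psi \in \mathbb{F}_{<n}$ supplies the side condition $m<n$. I expect this modality case to be the main obstacle: one must first smuggle the subformula $\chi$ under the outer box with the correctly (universally) quantified induction hypothesis before item~\ref{prop5} can be applied, whereas all the other items reduce to a single axiom or rule.
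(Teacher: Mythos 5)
Your proof is correct and takes essentially the same route as the paper: the paper likewise treats Items 1--4 as routine consequences of the axioms and rules, and proves Item 5 by a straightforward induction on the structure of $\psi$ driven by Rule \ref{r:4}, which your argument packages through Item \ref{prop5}. The additional details you supply---the reflection fact $\la n^\al \ra \chi \vdash \chi$ from Axiom \ref{mon} with $\be = 0$, the induction hypothesis quantified over all $\varphi$, and the successor-exponent bookkeeping---are exactly the pieces the paper leaves implicit.
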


\begin{proof}
The only non-trivial item is Item \ref{prop6} which follows from a straight-forward induction on the length of $\psi$ using Rule \ref{r:4}.
\end{proof}

\section{Normal forms}


In this section we shall prove that any formula of $\TPr$ is equivalent to a formula in monomial normal for --${\sf MNF}$-- and also equivalent to a formula in increasing normal form, ${\sf INF}$.

\subsection{Monomial normal forms versus increasing normal forms}

In this subsection we prove that monomial normal forms and increasing normal forms can easily be transformed into each other. Let us first observe that for each $\sf MNF$ there is an equivalent $\sf INF$ that holds some similarities with the original $\sf MNF$.

\begin{theorem}
For every $\psi \in {\sf MNF} \mbox{ with } \ \psi := \MNF{0}{k}$, there is an ordinal worm $A \in {\sf INF}$ such that:
\begin{enumerate}
\item 
$\psi \equiv A$;

\item $A := \la n_0^{\be_0} \ra \ldots \la n_k^{\be_k} \ra \top$ where:
	\begin{enumerate}
	\item $\be_k = \al_k$ and,
	\item for all $i, \ 0 \leq i < k$ we have
	\[
	\be_i = -1 + \frac{\al_i}{e^{n_{i+1} - n_i}(\al_{i+1})} .
	\]
	\end{enumerate}
\end{enumerate}	
 \label{MNFINC}
\end{theorem}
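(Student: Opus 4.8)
The plan is to argue by induction on the number of monomials in $\psi$, that is, on $k$, with the Schmerl axiom (Axiom~\ref{MS1}) serving as the sole engine of the transformation. Read from right to left, that axiom trades a conjunction of the shape $\la n^\gamma \ra \top \wedge (\text{\sf MNF})$ for a single modality applied to the same {\sf MNF}, which is exactly the move that folds a flat conjunction into a nested worm. All the ordinal bookkeeping will then be forced by matching the exponent produced by the axiom against the prescribed $\al_0$.

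For the base case $k=0$ the formula $\psi = \la n_0^{\al_0} \ra \top$ is a single monomial, which is already in {\sf INF}, so taking $A := \la n_0^{\al_0} \ra \top$ with $\be_0 = \al_0$ meets the two conditions vacuously. For the inductive step I would write $\psi = \la n_0^{\al_0} \ra \top \wedge \psi'$ with $\psi' := \la n_1^{\al_1} \ra \top \wedge \ldots \wedge \la n_k^{\al_k} \ra \top$. By clause \ref{item:TechnicalMNFclause} of Definition \ref{definition:MNFs}, $\psi'$ is itself an {\sf MNF}, we have $n_0 < n_1$, and $\al_0 = e^{n_1 - n_0}(\al_1) \cdot (2 + \gamma)$ for some $\gamma < \varepsilon_0$; this divisibility is the crucial structural input. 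The induction hypothesis applied to $\psi'$ yields an {\sf INF} $A' = \la n_1^{\be_1} \ra \ldots \la n_k^{\be_k} \ra \top$ with $\psi' \equiv A'$ whose exponents $\be_1, \ldots, \be_k$ are exactly those in the statement.

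It then remains to prepend one modality. Setting $\be_0 := -1 + \frac{\al_0}{e^{n_1-n_0}(\al_1)}$, the exact divisibility gives $\frac{\al_0}{e^{n_1-n_0}(\al_1)} = 2 + \gamma$, so $\be_0 = 1 + \gamma > 0$ and hence $A := \la n_0^{\be_0} \ra A'$ is a genuine {\sf INF} (using $n_0 < n_1$ and $A' \in {\sf INF}$). To establish $\psi \equiv A$ I would first invoke Rule \ref{r:3} on both directions of $\psi' \equiv A'$ to get $\la n_0^{\be_0} \ra A' \equiv \la n_0^{\be_0} \ra \psi'$, and then apply the Schmerl axiom to $\la n_0^{\be_0} \ra \psi'$ (legitimate since $n_0 < n_1$ and $\psi'$ is an {\sf MNF}), obtaining $\la n_0^{\be_0} \ra \psi' \equiv \la n_0^{e^{n_1-n_0}(\al_1)\cdot(1+\be_0)} \ra \top \wedge \psi'$. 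The exponent collapses via $1 + \be_0 = 1 + (1+\gamma) = 2+\gamma$ and $e^{n_1-n_0}(\al_1)\cdot(2+\gamma) = \al_0$, so the right-hand side is precisely $\la n_0^{\al_0}\ra\top \wedge \psi' = \psi$; chaining these equivalences through Rule \ref{r:2} closes the induction.

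The only genuinely delicate point is the ordinal arithmetic: one must be certain that the division $\al_0 / e^{n_1-n_0}(\al_1)$ is exact and that $-1 + (2+\gamma)$ behaves correctly under left-addition of $1$. Both are guaranteed by the normalization clause \ref{item:TechnicalMNFclause}$(c)$, which was designed precisely so that $\al_0$ is a left multiple of $e^{n_1-n_0}(\al_1)$ with quotient at least $2$. This forces the quotient to equal $2+\gamma$, makes $\be_0$ a strictly positive ordinal, and yields the identity $1+\be_0 = 2+\gamma$ needed to recover $\al_0$ from the Schmerl exponent. Everything else reduces to direct applications of the calculus, so I anticipate no further obstacle.
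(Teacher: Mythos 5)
Your proposal is correct and takes essentially the same route as the paper's own proof: induction on $k$, the base case being trivial, and the inductive step combining the induction hypothesis on the tail $\psi'$ with the Schmerl axiom (Axiom \ref{MS1}), the exactness of the ordinal division being exactly what clause \ref{item:TechnicalMNFclause} of Definition \ref{definition:MNFs} guarantees. The paper states this in one line; your write-up just makes the ordinal bookkeeping ($\be_0 = 1+\gamma$, $1+\be_0 = 2+\gamma$, hence the Schmerl exponent recovers $\al_0$) explicit.
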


\begin{proof}
By induction on $k$. The base case is trivial and the inductive case follows from the I.H. and Axiom \ref{MS1}. Note that by an easy induction we see that the division is well-defined in virtue of Item \ref{item:TechnicalMNFclause} of Definition \ref{definition:MNFs}.
\end{proof}

In particular, this theorem tells us that if we start out with a $\sf MNF$ $\psi$, there is an equivalent $\sf INF$ ordinal worm $A$ where the modality bases are the same. The next theorem tells us that this feature of conserving the modality bases also holds if we go from an ordinal worm $A$ in $\sf INF$ to an equivalent formula $\psi$ in $\sf MNF$.

\begin{theorem}
Let $A := \la n_0^{\be_0} \ra \ldots \la n_k^{\be_k} \ra \top \in {\sf INF}$. There is $\psi \in {\sf MNF}$ such that:
\begin{enumerate}
\item 
$A \equiv \psi$;

\item $\psi := \MNF{0}{k}$ where
\begin{enumerate}
\item
$\al_k = \be_k$ and,

\item
for all $i, \ 0 \leq i < k$, $\al_i = e^{n_{i+1} - n_i} (\be_{i+1}) \cdot (1 + \be_i)$.
\end{enumerate}
\end{enumerate}
\label{INCMNF}
\end{theorem}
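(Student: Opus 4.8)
The plan is to prove Theorem \ref{INCMNF} by induction on $k$, the length of the increasing normal form $A$, mirroring the structure of the converse Theorem \ref{MNFINC}. The base case $k=0$ is trivial: a single monomial $\la n_0^{\be_0} \ra \top$ is already in ${\sf MNF}$ and we set $\al_0 = \be_0$, matching clause (a). For the inductive step, suppose the claim holds for increasing normal forms of length $k$, and let $A := \la n_0^{\be_0} \ra \la n_1^{\be_1} \ra \ldots \la n_k^{\be_k} \ra \top \in {\sf INF}$. By the definition of ${\sf INF}$, the tail $A' := \la n_1^{\be_1} \ra \ldots \la n_k^{\be_k} \ra \top$ is itself in ${\sf INF}$, with $n_0 < n_1$ and $0 < \be_0$. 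Applying the induction hypothesis to $A'$ yields a $\psi' := \la n_1^{\al_1} \ra \top \land \ldots \land \la n_k^{\al_k} \ra \top \in {\sf MNF}$ with $A' \equiv \psi'$, where $\al_k = \be_k$ and $\al_i = e^{n_{i+1}-n_i}(\be_{i+1}) \cdot (1+\be_i)$ for $1 \leq i < k$.

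The heart of the argument is to prepend the outermost modality $\la n_0^{\be_0} \ra$. First I would use Proposition \ref{prop}, Item \ref{prop1} (congruence of $A\varphi$ under provable entailment applied to the single-modality worm $\la n_0^{\be_0}\ra$) together with $A' \equiv \psi'$ to get $\la n_0^{\be_0}\ra A' \equiv \la n_0^{\be_0}\ra \psi'$, i.e. $A \equiv \la n_0^{\be_0}\ra \psi'$. Now $\psi' \in {\sf MNF}$ is precisely of the form $\la n_1^{\al_1}\ra \top \land \ldots \land \la n_k^{\al_k}\ra \top$, so I can invoke the Schmerl Axiom \ref{MS1} with $n := n_0$ (legitimate since $n_0 < n_1$ and $\psi' \in {\sf MNF}$). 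This gives
\[
\la n_0^{\be_0}\ra \psi' \ \equiv\ \la n_0^{\,e^{n_1 - n_0}(\al_1)\cdot(1+\be_0)}\ra \top \ \land\ \psi'.
\]
Setting $\al_0 := e^{n_1-n_0}(\al_1)\cdot(1+\be_0)$, the right-hand side is exactly $\la n_0^{\al_0}\ra \top \land \la n_1^{\al_1}\ra \top \land \ldots \land \la n_k^{\al_k}\ra \top$, which is the desired $\psi$, and chaining the two equivalences gives $A \equiv \psi$.

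Two further obligations remain. I must check that $\al_0$ matches the claimed closed-form formula: substituting $\al_1 = e^{n_2-n_1}(\be_2)\cdot(1+\be_1)$ (for $k \geq 2$; for $k=1$ one has $\al_1 = \be_1$) into $\al_0 = e^{n_1-n_0}(\al_1)\cdot(1+\be_0)$ and using the composition law $e^{n+m} = e^n \circ e^m$ from the hyper-exponentiation definition together with distributivity of $e$ over the relevant ordinal products — this is the routine calculation I would verify but not grind through in full — should reproduce $\al_0 = e^{n_1-n_0}(\be_1)\cdot(1+\be_0)$ as stated. Second, I must verify that the resulting $\psi$ genuinely lies in ${\sf MNF}$: this amounts to checking clause (\ref{item:TechnicalMNFclause}) of Definition \ref{definition:MNFs}, namely that $\al_0$ has the form $e^{n_1-n_0}(\al_1)\cdot(2+\gamma)$ for some $\gamma < \varepsilon_0$. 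Since $0 < \be_0$ forces $1 + \be_0 = 2 + \gamma$ for a suitable $\gamma$, this condition is met.

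The main obstacle I anticipate is precisely the bookkeeping in this last verification: ensuring both that the Schmerl axiom is applied with its side condition $\psi' \in {\sf MNF}$ genuinely satisfied at each stage, and that the algebraic identity for $\al_0$ survives the interaction between the hyper-exponential composition and the $(1+\cdot)$ and $(2+\cdot)$ shifts. The condition $0 < \be_i$ built into the ${\sf INF}$ definition is what guarantees the shift $1 + \be_0$ can be rewritten as $2 + \gamma$, so the membership in ${\sf MNF}$ hinges on faithfully tracking that hypothesis; everything else is a direct transcription of the induction hypothesis through Axiom \ref{MS1} and Proposition \ref{prop}.
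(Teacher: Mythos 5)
Your construction coincides with the paper's proof: induction on $k$, prepending the outermost modality via Proposition \ref{prop} (or Rule \ref{r:3}), and one application of Axiom \ref{MS1}, whose side conditions ($\psi' \in {\sf MNF}$ by the induction hypothesis, $n_0 < n_1$, and $0 < \be_0$ giving $1+\be_0 = 2 + \gamma$ for clause (\ref{item:TechnicalMNFclause}) of Definition \ref{definition:MNFs}) you verify correctly. Everything up to and including the identification $\al_0 = e^{n_1-n_0}(\al_1)\cdot(1+\be_0)$ is exactly right, and this is all the paper's one-line proof does.

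The genuine gap is the step you defer as ``the routine calculation I would verify but not grind through in full'': that calculation cannot be carried out, because the identity it requires,
\[
e^{n_1-n_0}(\al_1)\cdot(1+\be_0) \ = \ e^{n_1-n_0}(\be_1)\cdot(1+\be_0),
\]
is false whenever $k \geq 2$. There is no ``distributivity of $e$ over ordinal products'' ($e$ turns sums into products, not products into products), and $e^{n_1-n_0}$ is strictly increasing while the induction hypothesis gives $\al_1 = e^{n_2-n_1}(\be_2)\cdot(1+\be_1) > \be_1$, so the two sides generically differ. Concretely, for $A = \la 0^1 \ra \la 1^1 \ra \la 2^1 \ra \top$ your (correct) recursion gives $\al_1 = \omega\cdot 2$ and $\al_0 = e(\omega\cdot 2)\cdot 2 = \omega^{\omega\cdot 2}\cdot 2$, whereas the closed form in clause 2(b) of the statement gives $e(1)\cdot 2 = \omega\cdot 2$; the conjunction $\la 0^{\omega\cdot 2}\ra\top \land \la 1^{\omega\cdot 2}\ra\top \land \la 2^{1}\ra\top$ violates clause (\ref{item:TechnicalMNFclause}) of Definition \ref{definition:MNFs}, so it is not even an {\sf MNF}, and it is strictly weaker than $A$. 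What this exposes is that clause 2(b) as printed is a misprint: it must read $\al_i = e^{n_{i+1}-n_i}(\al_{i+1})\cdot(1+\be_i)$, with the {\sf MNF}-exponent $\al_{i+1}$ rather than the {\sf INF}-exponent $\be_{i+1}$. That is precisely what your recursion produces, and it is the inverse of the formula $\be_i = -1 + \frac{\al_i}{e^{n_{i+1}-n_i}(\al_{i+1})}$ of Theorem \ref{MNFINC}, as forced by the paper's remark that $\mathcal M \circ \mathcal I$ and $\mathcal I \circ \mathcal M$ are the identity. So your argument is sound for the corrected statement; the reconciliation you postponed is impossible as stated, and the right move was to flag the discrepancy rather than appeal to a nonexistent algebraic law.
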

\begin{proof}
By induction on $k$, with the help of Axiom \ref{MS1} for the inductive case.
\end{proof} 

The proofs of these theorems actually provide operations $\mathcal I$ and $\mathcal M$ to go from a formula $\psi$ in $\sf MNF$ to a corresponding $\mathcal I (\psi)$ in $\sf INF$ and from an ordinal worm $A$ in $\sf INF$ to a corresponding formula $\mathcal M (A)$ in $\sf MNF$. It is easy to see that both $\mathcal M \circ \mathcal I$ and $\mathcal I \circ \mathcal M$ are the identity.

\begin{cor} \label{peelHead}
Given $\psi \in {\sf MNF}$ of the form $\MNF{0}{k}$ and $A := \la n_0^{\be_0} \ra \ldots \la n_k^{\al_k} \ra \top \in {\sf INF}$, if $\mathcal{I}(\psi) = A$ then $\mathcal{I}(\MNF{1}{k}) = \la n_1^{\be_1} \ra \ldots \la n_k^{\al_k} \ra \top$. 
\end{cor}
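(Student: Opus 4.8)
The plan is to read the corollary straight off the closed-form description of the operation $\mathcal{I}$ furnished by Theorem~\ref{MNFINC}, exploiting the fact that that description is purely \emph{local} in the bases and exponents of adjacent monomials. Concretely, I would argue that dropping the leading conjunct $\la n_0^{\al_0} \ra \top$ of $\psi$ leaves every exponent $\be_1, \ldots, \be_k$ of the corresponding increasing normal form untouched, so that $\mathcal{I}(\MNF{1}{k})$ is nothing but the worm $\mathcal{I}(\psi) = A$ with its head modality $\la n_0^{\be_0} \ra$ peeled off.

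First I would check that the tail $\MNF{1}{k}$ is itself a bona fide element of ${\sf MNF}$, so that $\mathcal{I}$ is even defined on it and Theorem~\ref{MNFINC} applies. This is immediate from the inductive definition of ${\sf MNF}$: since $\psi = \la n_0^{\al_0} \ra \top \wedge \MNF{1}{k}$ lies in ${\sf MNF}$ and has at least two monomials, it must have been produced by clause~\ref{item:TechnicalMNFclause} of Definition~\ref{definition:MNFs}, whose very hypothesis~$(a)$ is that $\MNF{1}{k} \in {\sf MNF}$. Applying Theorem~\ref{MNFINC} to this ${\sf MNF}$ on the index range $1, \ldots, k$ then yields an ${\sf INF}$ worm $\la n_1^{\be_1'} \ra \ldots \la n_k^{\be_k'} \ra \top$ whose exponents obey the same recipe: $\be_k' = \al_k$ and $\be_i' = -1 + \frac{\al_i}{e^{n_{i+1} - n_i}(\al_{i+1})}$ for $1 \leq i < k$. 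The crucial observation is that, for each index $i$ with $1 \leq i \leq k$, this formula involves only the data $\al_i, \al_{i+1}, n_i, n_{i+1}$ of monomials $i$ and $i+1$, and is in particular completely independent of the discarded head $\la n_0^{\al_0} \ra \top$. Comparing with the exponents $\be_i$ that Theorem~\ref{MNFINC} attaches to the full $\psi$, we get $\be_i' = \be_i$ throughout, whence $\mathcal{I}(\MNF{1}{k}) = \la n_1^{\be_1} \ra \ldots \la n_k^{\al_k} \ra \top$, exactly as claimed. (The divisions defining $\be_1, \ldots, \be_k$ are well-defined for the same reason as in the proof of Theorem~\ref{MNFINC}, since $\MNF{1}{k}$ continues to satisfy clause~\ref{item:TechnicalMNFclause} on its range.)

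There is essentially no hard step here; the statement is just the remark that peeling the leading conjunct of a monomial normal form corresponds to peeling the leading modality of its increasing normal form, and the only point requiring a modicum of care is the bookkeeping that certifies $\MNF{1}{k}$ as an ${\sf MNF}$. An alternative and perhaps more conceptual route I would mention is to inspect the induction in the proof of Theorem~\ref{MNFINC} itself: there one builds $\mathcal{I}(\psi)$ by first applying the induction hypothesis to the tail $\MNF{1}{k}$, producing an inner ${\sf INF}$ worm, and then prepending a single head modality $\la n_0^{\be_0} \ra$ by means of Axiom~\ref{MS1} (rewriting under the outermost modality via Rule~\ref{r:3}). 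By this construction the inner worm is \emph{defined} to be $\mathcal{I}(\MNF{1}{k})$, which is precisely the assertion of the corollary; once the exponents are pinned down by the closed form, the identity is forced.
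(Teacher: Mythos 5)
Your proposal is correct and matches the paper's (implicit) justification: the corollary is stated in the paper as an immediate consequence of Theorem~\ref{MNFINC}, whose closed-form exponent recipe $\be_i = -1 + \frac{\al_i}{e^{n_{i+1}-n_i}(\al_{i+1})}$ is local to adjacent monomials and hence unaffected by deleting the head conjunct, which is exactly your argument (your alternative route via the induction in the proof of Theorem~\ref{MNFINC} is the same observation in constructive form). The only cosmetic remark is that your appeal to clause~\ref{item:TechnicalMNFclause} of Definition~\ref{definition:MNFs} to certify $\MNF{1}{k} \in {\sf MNF}$ presupposes $k \geq 1$; for $k = 0$ the tail is $\top$ and the claim is trivial.
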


\subsection{Monomial normal forms are closed under conjunctions}

In this subsection we will prove that $\sf MNF$s are closed under taking conjunctions. A first step in doing so is established by the following lemma.

\begin{lemma} \label{conNF}
Let $\psi := \MNF{0}{k} \in {\sf MNF}$ and $n < n_0$. There is $\psi' \in {\sf MNF}$ such that:
\[
\la n^\beta \ra \top \land \psi \equiv \psi'. 
\]
\end{lemma}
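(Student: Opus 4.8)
The plan is to reduce everything to a comparison of $\beta$ with the ordinal $\gamma := e^{n_0 - n}(\al_0)$, which measures how much of a base-$n$ modality is already implicit in $\psi$; no induction on the length $k$ will be needed. First I would record the key absorption fact: since $\psi \vdash \la n_0^{\al_0} \ra \top$ by Axiom \ref{conjel}, Axiom \ref{omega} gives $\psi \vdash \la n^{\gamma} \ra \top$. Hence for $\beta \le \gamma$ the monomial $\la n^\beta \ra \top$ is already a consequence of $\psi$ by monotonicity (Axiom \ref{mon}), so $\la n^\beta \ra \top \land \psi \equiv \psi$ and I may take $\psi' := \psi$.

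For $\beta > \gamma$ I would use ordinal division to write $\beta = \gamma \cdot \delta + \rho$ with $\delta \geq 1$ and $0 \le \rho < \gamma$, and set $\al^* := \beta$ when $\rho = 0$ and $\al^* := \gamma(\delta + 1)$ when $\rho > 0$. In either case $\al^*$ is a multiple $\gamma \cdot \xi$ with $\xi \geq 2$, hence of the form $\gamma \cdot (2 + \be)$ required by clause \ref{item:TechnicalMNFclause} of Definition \ref{definition:MNFs}; thus $\psi' := \la n^{\al^*} \ra \top \land \psi$ lies in ${\sf MNF}$, and I claim it is the desired formula. When $\rho = 0$ there is nothing to prove. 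When $\rho > 0$ the direction $\psi' \vdash \la n^\beta \ra \top \land \psi$ is immediate from $\al^* = \gamma(\delta+1) > \gamma\delta + \rho = \beta$ via Axioms \ref{mon}, \ref{conjel} and Rule \ref{r:1}.

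The crux is the reverse entailment $\la n^\beta \ra \top \land \psi \vdash \la n^{\al^*} \ra \top$, i.e.\ the fact that the remainder $\rho$ gets rounded \emph{up} to a full copy of $\gamma$ (it cannot round down, since a conjunction only gives the maximum of two exponents, never their sum). Assuming first that $\beta$ is a successor, I would run the chain: from $\psi \vdash \la n_0^{\al_0} \ra \top$ and Rule \ref{r:1} we get $\la n^\beta \ra \top \land \psi \vdash \la n_0^{\al_0} \ra \top \land \la n^\beta \ra \top$; Rule \ref{r:4} (applicable since $n < n_0$ and $\beta$ is a successor, with the two inner formulas taken to be $\top$) then yields $\la n_0^{\al_0} \ra \la n^\beta \ra \top$; Axiom \ref{omega} converts the head to give $\la n^{\gamma} \ra \la n^\beta \ra \top$; and co-additivity (Axiom \ref{coadditive}) rewrites this as $\la n^{\beta + \gamma} \ra \top$. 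The argument closes with the ordinal identity $\beta + \gamma = \gamma\delta + \rho + \gamma = \gamma\delta + \gamma = \al^*$, which holds because $\gamma = e^{n_0 - n}(\al_0)$ is additively principal and $\rho < \gamma$, so $\rho + \gamma = \gamma$. For limit $\beta$ I would reduce to the successor case: since $\gamma\delta < \beta$ we have $\gamma\delta < \gamma\delta + 1 < \beta$, so $\la n^\beta \ra \top \vdash \la n^{\gamma\delta + 1} \ra \top$ by Axiom \ref{mon}, and the successor case applied to $\gamma\delta + 1$ (same $\delta$, remainder $1$) gives $\la n^{\gamma\delta+1} \ra \top \land \psi \vdash \la n^{\al^*} \ra \top$.

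The one genuinely delicate point is this rounding-up step, and in particular the identity $\rho + \gamma = \gamma$; everything else is bookkeeping with the axioms. I would therefore isolate as a small preliminary observation that each hyper-exponential value $e^{j}(\al_0)$ with $j \geq 1$ and $\al_0 \geq 1$ is a power of $\omega$ (using $e^1(\al) = \omega^\al$ for $\al \neq 0$ and iterating), hence additively principal; this applies to $\gamma$ since $n < n_0$ forces $n_0 - n \geq 1$ and the monomials of a genuine ${\sf MNF}$ carry nonzero exponents.
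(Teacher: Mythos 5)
Your proof is correct and follows essentially the same route as the paper: the absorption case $\beta \le e^{n_0-n}(\al_0)$ and the rounding-up chain (deriving $\la n_0^{\al_0}\ra\la n^{\beta}\ra\top$ via Rule \ref{r:4}, converting the head with Axiom \ref{omega}, and merging with co-additivity) are exactly the paper's steps. The only difference is bookkeeping: you locate the target exponent by ordinal division $\beta = e^{n_0-n}(\al_0)\cdot\delta + \rho$ and round up to the next multiple, whereas the paper truncates the Cantor normal form of $\beta$ at the first term below $e^{n_0-n}(\al_0)$ and appends one copy of it — these produce the same ordinal, and your explicit treatment of additive principality and of the limit case makes precise what the paper leaves implicit.
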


\begin{proof}
If $\beta \leq e^{n_0 - n} (\al_0)$, clearly we can take $\psi' = \psi$. Otherwise, let $\beta_{\sf CNF}$ be the Cantor normal form of $\beta$ with base $\omega$ and we write  $\beta_{\sf CNF} := \beta_0 + \ldots + \beta_i$ with each $\beta_j$ additively indecomposable and $\beta_0 \geq \ldots \geq \beta_i$. 

If for all $j$ with $0 \leq j \leq i$ we have $\al_j \geq e^{n_0 - n} (\al_0)$, then it easy to see that $\la n^\al \ra \top \land \psi \in {\sf MNF}$. 
If on the contrary, there is $j$  with $0 < j \leq i$ such that $\beta_j < e^{n_0 - n} (\al_0)$ and $\beta_{j-1} \geq e^{n_0 - n} (\al_0)$, then we reason as follows:
\begin{tabbing}
$\la n^\beta \ra \top \land \psi $ \= $\equiv \la n^{\beta} \ra \top \land \la n_0^{ \al_0} \ra \top \land \psi$;\\ [0.20cm]
\ \> 
$\equiv \la n^{\beta_0 + \ldots + \beta_{i}} \ra \top \land \la n_0^{ \al_0} \ra \top \land \psi$;\\ [0.20cm]
\ \> 
$\vdash \la n^{\beta_0 + \ldots + \beta_{j-1} + 1 } \ra \top \land \la n_0^{ \al_0} \ra \top \land \psi$;\\[0.20cm]
\ \> 
$\vdash \la n_0^{ \al_0} \ra \la n^{\beta_0 + \ldots + \beta_{j-1} + 1 } \ra \top \land \psi$;\\[0.20cm]

\ \> 
$\vdash \la n^{ e^{n_0 -n}(\al_0)} \ra \la n^{\beta_0 + \ldots + \beta_{j-1} + 1 } \ra \top \land \psi$;\\[0.20cm]

\ \> 
$\vdash \la n^{\beta_0 + \ldots + \beta_{j-1} + e^{n_0 - n} (\al_0)} \ra \top \land \psi$.
\end{tabbing} 

However, by monotonicity we see that $\la n^{\beta_0 + \ldots + \beta_{j-1} + e^{n_0 - n} (\al_0)} \ra \top \vdash \la n^\beta \ra \top$ so that actually $\la n^{\beta_0 + \ldots + \beta_{j-1} + e^{n_0 - n} (\al_0)} \ra \top \land \psi \equiv \la n^\beta \ra \top \land \psi$.
Clearly we have $\la n^{\beta_0 + \ldots + \beta_{j-1} + e^{n_0 - n} (\al_0)} \ra \top \land \psi \in {\sf MNF}$ and we are done.
\end{proof}

We get the following simple corollaries.

\begin{cor}\label{theorem:conjunctionOfMonomialsHasMNF}
Any conjunction of monomials is equivalent to some ${\sf MNF}$.
\end{cor}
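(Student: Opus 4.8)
The plan is to prove Corollary \ref{theorem:conjunctionOfMonomialsHasMNF} by induction on the number of monomials in the conjunction, using Lemma \ref{conNF} as the central workhorse together with the already-established fact (Theorems \ref{MNFINC} and \ref{INCMNF}) that $\sf MNF$s correspond to $\sf INF$s. First I would reduce the statement to a cleaner form: a conjunction of monomials is some $\la m_0^{\gamma_0} \ra \top \land \ldots \land \la m_j^{\gamma_j} \ra \top$, and by the associativity and commutativity of $\land$ built into the calculus (Axioms 1 and 2 together with Rule \ref{r:1}), I may freely reorder and regroup conjuncts. Moreover, by Axiom \ref{mon} (monotonicity) any two monomials sharing the same base $m$, say $\la m^{\gamma}\ra\top \land \la m^{\gamma'}\ra\top$ with $\gamma \leq \gamma'$, collapse to $\la m^{\gamma'}\ra\top$. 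Hence without loss of generality the conjunction has pairwise distinct bases, and I may assume they are sorted in strictly increasing order $m_0 < m_1 < \ldots < m_j$.

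Next I would set up the induction. The base cases (the empty conjunction $\top$ and a single monomial) are immediate from clauses i) and ii) of Definition \ref{definition:MNFs}. For the inductive step, I would take the conjunction $\la m_0^{\gamma_0}\ra\top \land \big(\la m_1^{\gamma_1}\ra\top \land \ldots \land \la m_j^{\gamma_j}\ra\top\big)$ and apply the induction hypothesis to the parenthesized tail, obtaining an equivalent $\psi \in {\sf MNF}$. The crucial point is that the construction in Theorems \ref{MNFINC}/\ref{INCMNF} preserves the set of modality bases, so the smallest base occurring in $\psi$ is still $m_1$, and by our sorting we have $m_0 < m_1$. This is precisely the hypothesis $n < n_0$ required by Lemma \ref{conNF}. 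Applying that lemma to $\la m_0^{\gamma_0}\ra\top \land \psi$ then directly yields an equivalent $\psi' \in {\sf MNF}$, and transitivity of $\equiv$ (via Rule \ref{r:2} in both directions) closes the induction.

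The main obstacle I anticipate is not any single deep step but rather the bookkeeping needed to legitimately arrive at the hypothesis of Lemma \ref{conNF}, namely verifying that after applying the induction hypothesis the resulting $\sf MNF$ still has $m_1$ as its least base so that $m_0 < m_1$ genuinely guarantees $n < n_0$. This relies essentially on the base-preservation property noted after Theorem \ref{INCMNF} (that $\mathcal M$ and $\mathcal I$ conserve the modality bases), and on the preliminary normalization that eliminates repeated bases via Axiom \ref{mon}; without the latter a collision $m_0 = m_1$ would fall outside the scope of the lemma. Once these structural facts are in place, the argument is a clean finite induction and requires no further computation, since Lemma \ref{conNF} already absorbs the delicate Cantor-normal-form manipulation internally.
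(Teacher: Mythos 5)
Your proposal is correct and follows essentially the same route as the paper, whose entire proof is ``by the length of such a conjunction of monomials written with the modality bases from small to large, applying Lemma \ref{conNF} at the inductive step''; your version simply spells out the bookkeeping (collapsing repeated bases via Axiom \ref{mon}, sorting, and checking the hypothesis $n<n_0$) that the paper leaves implicit. The one inaccuracy is your citation for the crucial base-preservation fact: what you actually need is a strengthened induction hypothesis saying the resulting {\sf MNF} only uses bases occurring in the original conjunction, and this follows by inspecting the proof of Lemma \ref{conNF} (whose output is either $\psi$ itself or $\la n^{\gamma}\ra \top \land \psi$ for suitable $\gamma$, so no new, smaller bases are introduced), not from Theorems \ref{MNFINC} and \ref{INCMNF}, which concern the ${\sf MNF}$/${\sf INF}$ conversion and play no role in this induction.
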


\begin{proof}
By the length of such a conjunction of monomials written with the modality bases from small to large, applying Lemma \ref{conNF} at the inductive step. 
\end{proof}

We can recast it as the first inductive step that each formula is equivalent to a $\sf MNF$.

\begin{cor}\label{theorem:MNFsClosedUnderConjunctions}
The monomials normal forms --$\sf MNF$s-- are closed under conjunctions.
\end{cor}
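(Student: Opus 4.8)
The plan is to derive this directly from Corollary \ref{theorem:conjunctionOfMonomialsHasMNF}, since almost all of the genuine work has already been carried out in Lemma \ref{conNF} and that corollary. First I would recall that, by Definition \ref{definition:MNFs}, every formula in ${\sf MNF}$ is a conjunction of monomials (possibly the empty conjunction $\top$, or a single monomial). Hence if $\psi_1, \psi_2 \in {\sf MNF}$, then $\psi_1 \wedge \psi_2$ is again a conjunction of monomials, namely the conjunction of all monomials occurring in $\psi_1$ together with all those occurring in $\psi_2$.

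The only point requiring a word of justification is that $\psi_1 \wedge \psi_2$ may be treated as an \emph{unordered} multiset of monomials, so that it really falls under the scope of Corollary \ref{theorem:conjunctionOfMonomialsHasMNF}. This is because conjunction is associative and commutative up to provable equivalence in $\TPr$: Axiom \ref{conjel} gives the projections $\varphi \wedge \psi \vdash \varphi$ and $\varphi \wedge \psi \vdash \psi$, and Rule \ref{r:1} allows one to recombine any two derivable consequences into a conjunction, so any two groupings or orderings of the same collection of conjuncts are interderivable. I would therefore rearrange the conjuncts of $\psi_1 \wedge \psi_2$ so that their modality bases run from small to large, obtaining an equivalent conjunction of monomials in the form required by Corollary \ref{theorem:conjunctionOfMonomialsHasMNF}.

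Applying that corollary then yields a $\psi' \in {\sf MNF}$ with $\psi_1 \wedge \psi_2 \equiv \psi'$, which is exactly the closure statement. I do not expect any real obstacle here: the inductive mechanism of repeatedly absorbing a leading monomial into an existing ${\sf MNF}$ (Lemma \ref{conNF}) is precisely what powers Corollary \ref{theorem:conjunctionOfMonomialsHasMNF}, and the present corollary is essentially a repackaging of that result once one observes that both inputs are themselves conjunctions of monomials. The result is thus best presented as a short consequence rather than a fresh argument.

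\begin{proof}
Both $\psi_1$ and $\psi_2$ are, by Definition \ref{definition:MNFs}, conjunctions of monomials, and hence so is $\psi_1 \wedge \psi_2$. Using Axiom \ref{conjel} and Rule \ref{r:1}, conjunction is commutative and associative up to $\equiv$, so we may reorder the conjuncts of $\psi_1 \wedge \psi_2$ with the modality bases from small to large. By Corollary \ref{theorem:conjunctionOfMonomialsHasMNF} this conjunction of monomials is equivalent to some $\psi' \in {\sf MNF}$, whence $\psi_1 \wedge \psi_2 \equiv \psi'$.
\end{proof}
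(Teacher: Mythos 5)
Your proof is correct and follows exactly the route the paper intends: the paper states this corollary without proof as an immediate consequence of Corollary \ref{theorem:conjunctionOfMonomialsHasMNF}, since a conjunction of two {\sf MNF}s is again a conjunction of monomials. Your additional remark that reordering and regrouping of conjuncts is justified by Axiom \ref{conjel} and Rule \ref{r:1} is a sound (and welcome) explicit treatment of a detail the paper leaves implicit.
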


In the light of this corollary, it only remains to show that $\sf MNF$s are closed under putting an ordinal-diamond up front of them.

\subsection{Monomial normal forms are closed under ordinal diamonds}

We shall now see that the collection of monomial normal forms is, modulo provable equivalence, closed under putting ordinal diamonds up front of them. 
Let us recall Schmerl's axiom scheme: 
\[
\begin{array}{ll}
\la n^\al \ra \, \big(\la n_0^{\al_0} \ra \top \wedge & \ldots \ \wedge \la n_k^{\al_k} \rangle \top\big) \ \ \ \ \ \ \ \equiv\\
&  \la n^{e^{n_0 - n} (\al_0) \cdot (1 + \al)}  \ra \, \top \ \  \land 
\la n_0^{\al_0} \ra \top \land \ldots \land \la n_k^{\al_k} \ra \top.\\
\end{array}
\]
We observe that this axiom tells us that putting a relatively small ($n< n_0$)  non-trivial ($\alpha \geq 1$) ordinal modality up front an {\sf MNF} $\MNF{0}{k}$ yields a new {\sf MNF}. To see that putting \emph{any} ordinal modality up front an {\sf MNF} is equivalent to some other {\sf MNF}, we need to study two cases: $n=n_0$ and $n>n_0$. Let us start with the latter.

\begin{proposition}\label{theorem:PseudoSchmerlOne}
Let $n, n_0$ be natural numbers with $n > n_0$. Further, let $\la n_0^{\al_0} \ra \top \, \land \, \MNF{1}{k}$ with $k\geq 0$ be a conjunction of monomials not necessarily in ${\sf MNF}$.
The following principle (we use $\text{PS}$ for \emph{Pseudo Schmerl}) is derivable in $\TPr$:\label{ObsMS2}
\[
\begin{array}{ll}
\la n^\al \ra \Big(\, \la n_0^{\al_0} \ra \top \  \wedge & \MNF{1}{k}\, \Big) \ \ \equiv \ \ \ \ \ \ \ \ \ \ \  \  \ \ \ \ \ \ \ \ \ \ \ \ \ \ \text{(PS1)}\\
& \la n_0^{\al_0 + e^{n-n_0} (\al)} \rangle \top \ \land \ \la n^\al \ra \Big( \, \la n_1^{\al_1} \ra \top \land \ldots \land \la n_k^{\al_k} \ra \top \, \Big). \\
\label{PS1}
\end{array}
\]
Note that for the case when $k=0$, the formula $\MNF{1}{k}$ is to be read as the empty conjunction which is $\top$ so that the principle becomes
\[
\la n^\al \ra  \la n_0^{\al_0} \ra \top  \ \ \equiv \ \  \la n_0^{\al_0 + e^{n-n_0} (\al)} \rangle \top \ \land \ \la n^\al \ra \top. 
\]
\end{proposition}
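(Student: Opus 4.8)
The plan is to prove the two entailments of (PS1) separately, writing $\chi := \MNF{1}{k}$ and treating $\chi$ simply as an arbitrary element of $\FLE$; its internal shape is never used, so the argument covers at once the case $k=0$ (where $\chi = \top$) and the hypothesis that the conjunction need not be an ${\sf MNF}$. First I would dispose of the degenerate case $\al = 0$. Here $e^{n-n_0}(0) = 0$, so by the convention $\la n^0 \ra \varphi = \varphi$ both sides of (PS1) literally collapse to $\la n_0^{\al_0} \ra \top \land \chi$ and the equivalence holds by reflexivity. Hence I may assume $\al \geq 1$, which is exactly what guarantees $e^{n-n_0}(\al) \geq 1$; this positivity is the hinge of the whole argument.

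For the left-to-right entailment I would produce the two conjuncts on the right and combine them with Rule \ref{r:1}. The conjunct $\la n^\al \ra \chi$ is obtained from $\la n_0^{\al_0} \ra \top \land \chi \vdash \chi$ (Axiom \ref{conjel}) by Rule \ref{r:3}. For the conjunct $\la n_0^{\al_0 + e^{n-n_0}(\al)} \ra \top$, I would first pass, again by Axiom \ref{conjel} and Rule \ref{r:3}, to $\la n^\al \ra \la n_0^{\al_0} \ra \top$; then apply Axiom \ref{omega} with the base split $n = n_0 + (n - n_0)$ to reach $\la n_0^{e^{n-n_0}(\al)} \ra \la n_0^{\al_0} \ra \top$; and finally fuse the two stacked $n_0$-modalities into a single one through the co-additivity Axiom \ref{coadditive}, chaining the steps with Rule \ref{r:2}. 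This direction is uniform in $k$ and in $\al$.

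For the right-to-left entailment I would use positivity of $e^{n-n_0}(\al)$ to trade the (in general limit) exponent $\al_0 + e^{n-n_0}(\al)$ for the successor ordinal $\al_0 + 1$. Since $\al_0 + 1 \leq \al_0 + e^{n-n_0}(\al)$, monotonicity (Axiom \ref{mon}) gives $\la n_0^{\al_0 + e^{n-n_0}(\al)} \ra \top \vdash \la n_0^{\al_0 + 1} \ra \top$. As $\al_0 + 1$ is a successor and $n_0 < n$, Item \ref{prop6} of Proposition \ref{prop} applies and yields $\la n_0^{\al_0 + 1} \ra \top \land \la n^\al \ra \chi \equiv \la n^\al \ra ( \la n_0^{\al_0 + 1} \ra \top \land \chi )$. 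A last use of monotonicity inside the box (Axiom \ref{mon} under Rule \ref{r:3}) lowers the exponent $\al_0 + 1$ back to $\al_0$, delivering $\la n^\al \ra ( \la n_0^{\al_0} \ra \top \land \chi )$, i.e.\ the left-hand side of (PS1).

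The main obstacle is this converse direction when $\al_0$ is a limit ordinal (or $0$): then $\la n_0^{\al_0} \ra \top$ cannot be absorbed into the scope of $\la n^\al \ra$ by the successor-requiring principles (Item \ref{prop6}, ultimately Rule \ref{r:4}). The resolving idea is that the strictly positive summand $e^{n-n_0}(\al)$, available precisely because $\al \geq 1$, always leaves room for an intermediate successor exponent $\al_0 + 1$ that can first be absorbed and then lowered again. This also explains why $\al = 0$ genuinely requires the separate, trivial treatment given at the outset.
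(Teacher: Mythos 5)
Your proof is correct and follows essentially the same route as the paper's: left-to-right by distributing $\la n^\al \ra$ over the conjunction, applying Axiom \ref{omega} and fusing with co-additivity; right-to-left by lowering the exponent to the successor $\al_0+1$ via monotonicity, absorbing it under $\la n^\al \ra$ with Rule \ref{r:4} (your Item \ref{prop6} is just its packaged form), and lowering back to $\al_0$ inside the scope. Your explicit treatment of the degenerate case $\al=0$ is a small refinement the paper leaves implicit, since its monotonicity step tacitly assumes $e^{n-n_0}(\al)\geq 1$.
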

\begin{proof}
For the right-to-left sequent, we apply the following reasoning:
\[
\begin{array}{ll}
\la n_0^{\al_0 + e^{n-n_0} (\al)} \ra \top \land \la n^\al \ra \Big(& \hspace{-0.3 cm}\la n_1^{\al_1} \ra \top \land \ldots \land \la n_k^{\al_k} \ra \top \Big) \\
 &\vdash \ \ \  \la n_0^{\al_0 + 1} \ra \top \wedge \la n^\al \ra \Big(\MNF{1}{k} \Big);\\
 &\vdash \ \ \   \la n^\al \ra \Big( \la n_0^{\al_0 + 1} \ra \top \, \land \, \MNF{1}{k}\Big);\\
 &\vdash \ \ \   \la n^\al \ra \Big( \la n_0^{\al_0} \ra \top \, \land \, \MNF{1}{k}\Big). \\
\end{array}
\]

For the left-to-right sequent we can reason as follows:
\[
\begin{array}{ll}
\la n^\al \ra \Big(\la n_0^{\al_0} \ra \top \, \wedge & \ldots \ \wedge \la n_k^{\al_k} \ra \top \Big)  \\
 &\vdash \ \ \ \ \la n^\al \ra \la n_0^{\al_0} \ra \top \land \la n^\al \ra \Big(\MNF{1}{k} \Big);\\
 &\vdash \ \ \ \  \la n_0^{e^{n-n_0} (\al)} \ra \la n_0^{\al_0 } \rangle \top \land \la n^\al \ra \Big( \MNF{1}{k} \Big);\\
 &\vdash \ \ \ \  \la n_0^{\al_0 + e^{n-n_0} (\al)} \rangle \top \land \la n^\al \ra \Big(\MNF{1}{k}\Big). \\
\end{array}
\]

\end{proof}

\begin{proposition}\label{theorem:smallModalityOverEqualMNF}
Let $\MNF{0}{k} \in {\sf MNF}$ for $k\geq 1$. 
The following principle is derivable in $\TPr$:
\[
\begin{array}{ll}
\la n_0^\al \ra \, \Big(\, \la n_0^{\al_0} \ra \top & \wedge \ \ldots \ \wedge \la n_k^{\al_k} \ra \top \Big)\\
\ & \equiv \ \ \ \la n_0^{\al_0 + e^{n_1-n_0} (\al_1) \cdot \al} \rangle \, \top \ \land \ \la n_0^\al \ra \, \Big(\, \la n_1^{\al_1} \ra \top \land \ldots \land \la n_k^{\al_k} \ra \top \, \Big).\\
\end{array}
\]

\end{proposition}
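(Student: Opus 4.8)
The plan is to obtain the equivalence from two applications of Schmerl's axiom (Axiom \ref{MS1}) to the trailing sub-worm $\chi := \MNF{1}{k}$, glued together by co-additivity (Axiom \ref{coadditive}). Write $\psi = \la n_0^{\al_0}\ra\top \wedge \chi$ and abbreviate $\gamma := e^{n_1-n_0}(\al_1)$. The point that makes everything run is that $\chi$ is itself an ${\sf MNF}$ whose least base $n_1$ is strictly larger than $n_0$ (it is exactly what remains of $\psi$ after deleting its leading monomial), so Axiom \ref{MS1} is applicable to $\la n_0^\mu\ra\chi$ for \emph{every} exponent $\mu$. Moreover, since $\psi\in{\sf MNF}$, the technical clause (Definition \ref{definition:MNFs}, Item \ref{item:TechnicalMNFclause}) supplies $\al_0 = \gamma\cdot(2+\be)$ for some $\be<\varepsilon_0$; equivalently $1+\be = -1 + \al_0/\gamma$, so $1+\be$ is well defined.

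First I would \emph{unfold} the leading monomial: applying Axiom \ref{MS1} to $\la n_0^{1+\be}\ra\chi$ and using $\gamma\cdot(1+(1+\be)) = \gamma\cdot(2+\be) = \al_0$ gives $\la n_0^{\al_0}\ra\top \wedge \chi \equiv \la n_0^{1+\be}\ra\chi$. Prefixing $\la n_0^\al\ra$ to both sides (Rule \ref{r:3}) and then collapsing the two nested $n_0$-modalities by co-additivity (Axiom \ref{coadditive}) yields
\[
\la n_0^\al\ra\psi \ \equiv\ \la n_0^\al\ra\la n_0^{1+\be}\ra\chi \ \equiv\ \la n_0^{1+\be+\al}\ra\chi .
\]
Then I would \emph{refold}: a second application of Axiom \ref{MS1}, now to $\la n_0^{1+\be+\al}\ra\chi$, together with the left-distributivity computation $\gamma\cdot(1+(1+\be+\al)) = \gamma\cdot(2+\be) + \gamma\cdot\al = \al_0 + \gamma\cdot\al$, gives $\la n_0^{1+\be+\al}\ra\chi \equiv \la n_0^{\al_0 + \gamma\cdot\al}\ra\top \wedge \chi$. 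Chaining the two steps proves $\la n_0^\al\ra\psi \equiv \la n_0^{\al_0 + \gamma\cdot\al}\ra\top \wedge \chi$.

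It remains only to reconcile the bare conjunct $\chi$ with the $\la n_0^\al\ra\chi$ that appears in the statement. A third use of Axiom \ref{MS1} gives $\la n_0^\al\ra\chi \equiv \la n_0^{\gamma\cdot(1+\al)}\ra\top \wedge \chi$, and since $\al_0 + \gamma\cdot\al = \gamma\cdot(2+\be+\al) \geq \gamma\cdot(1+\al)$, the monotonicity Axiom \ref{mon} makes the extra conjunct $\la n_0^{\gamma\cdot(1+\al)}\ra\top$ redundant in the presence of $\la n_0^{\al_0+\gamma\cdot\al}\ra\top$. Hence $\la n_0^{\al_0+\gamma\cdot\al}\ra\top \wedge \chi \equiv \la n_0^{\al_0+\gamma\cdot\al}\ra\top \wedge \la n_0^\al\ra\chi$, which is precisely the right-hand side of the Proposition.

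The modal content is short and is entirely driven by the observation that $\chi$ has least base strictly above $n_0$, which is exactly what legitimizes all three invocations of Schmerl's axiom. Consequently the main (though still routine) obstacle I expect is the ordinal bookkeeping: one must handle the division $\al_0/\gamma = 2+\be$ coming from the ${\sf MNF}$ clause and the left-distributive identity $\gamma(\mu+\nu)=\gamma\mu+\gamma\nu$ correctly, all the while remembering that ordinal sums and products are only weakly monotone and are \emph{not} left-cancellative, so that the absorption in the final step must be justified by an explicit inequality of exponents rather than by cancellation.
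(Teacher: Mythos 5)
Your proof is correct, but it takes a genuinely leaner route than the paper's. The paper proves this proposition by passing through the increasing normal form of the \emph{whole} formula: it invokes Theorem \ref{MNFINC} together with Axiom \ref{coadditive} to turn $\la n_0^\al \ra \psi$ into the worm $\la n_0^{\delta_0+\al}\ra\la n_1^{\delta_1}\ra\ldots\la n_k^{\al_k}\ra\top$ (its $\delta_0$ is your $1+\be$), then uses Theorem \ref{INCMNF} plus an auxiliary induction to re-fold the tail into an {\sf MNF}, and finally applies Axiom \ref{MS1} with the same distributivity computation $e^{n_1-n_0}(\al_1)\cdot(1+\delta_0+\al)=\al_0+e^{n_1-n_0}(\al_1)\cdot\al$ that you use; the right-to-left direction then repeats the whole INF round trip. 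You never touch the tail: since $\chi=\MNF{1}{k}$ is itself an {\sf MNF} with least base $n_1>n_0$, you unfold only the head monomial ($\psi\equiv\la n_0^{1+\be}\ra\chi$, one instance of Axiom \ref{MS1}), collapse the nested $n_0$-modalities with Axiom \ref{coadditive}, and re-fold with a second instance of Axiom \ref{MS1}, so Theorems \ref{MNFINC} and \ref{INCMNF} are not needed at all and both directions of the equivalence arrive simultaneously as a chain of equivalences. Your final absorption step --- a third instance of Axiom \ref{MS1} plus the monotonicity Axiom \ref{mon}, justified by the explicit inequality $e^{n_1-n_0}(\al_1)\cdot(2+\be+\al)\geq e^{n_1-n_0}(\al_1)\cdot(1+\al)$ rather than any illicit cancellation --- replaces the paper's second conversion through INF and is arguably more transparent about why the bare conjunct $\chi$ may be upgraded to $\la n_0^\al\ra\chi$. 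What the paper's route buys is uniformity, since the MNF/INF translation machinery it leans on is set up once and reused in several later arguments; what yours buys is self-containment and economy: only the axioms, Rule \ref{r:3}, and elementary ordinal arithmetic (left distributivity and weak monotonicity, which you correctly flag as the only delicate points) are required.
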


\begin{proof}
For the left-to-right sequent, first notice that 
\[
\la n_0^\al \ra (\la n_0^{\al_0} \ra \top \wedge \ldots \wedge \la n_k^{\al_k} \ra \top) \vdash \la n_0^\al \ra ( \la n_1^{\al_1} \ra \top \land \ldots \land \la n_k^{\al_k} \ra \top ).
\]
To obtain that furthermore
\[
\la n_0^\al \ra (\la n_0^{\al_0} \ra \top \wedge \ldots \wedge \la n_k^{\al_k} \ra \top) \vdash \la n_0^{\al_0 + e^{n_1-n_0} (\al_1) \cdot \al} \rangle \top,
\]
note that by means of Theorem \ref{MNFINC} together with Axiom \ref{coadditive} we have that
\[
\la n_0^\al \ra \, \Big(\la n_0^{\al_0} \ra \top \wedge \ldots \wedge \la n_k^{\al_k} \ra \top\, \Big) \ \vdash \ \la n_0^{\delta_0 + \al} \ra \la n_1^{\delta_1} \ra \ldots \la n_k^{\al_k} \ra \top
\]
where $\delta_i = -1 + \frac{\al_i}{e^{n_{i+1} - n_i}(\al_{i+1})}$ for $0 \leq i < k$. Thus, by Theorem \ref{INCMNF} we get via an easy induction that:

\[
\la n_0^{\delta_0 + \al} \ra \la n_1^{\delta_1} \ra \ldots \la n_k^{\al_k} \ra \top \ \equiv \ \la n_0^{\delta_0 + \al} \ra \, \Big( \MNF{1}{k} \Big).
\]

Now, we apply Axiom \ref{MS1} once more observing that  
\[
e^{n_1 - n_0} (\al_1) \cdot (1 + \delta_0 + \al) = e^{n_1 - n_0} (\al_1) \cdot (1 + (-1 + \frac{\al_0}{e^{n_{1} - n_0}(\al_{1})}) + \al)\]

that is,
\[
e^{n_1 - n_0} (\al_1) \cdot (1 + \delta_0 + \al) = e^{n_1 - n_0} (\al_1) \cdot ( \frac{\al_0}{e^{n_{1} - n_0}(\al_{1})} + \al) = \al_0 + e^{n_{1} - n_0}(\al_{1}) \cdot \al ,
\]
to conclude that
\[
\la n_0^{\delta_0 + \al} \ra \la n_1^{\delta_1} \ra \ldots \la n_k^{\al_k} \ra \top \equiv \la n_0^{\al_0 + e^{n_1-n_0} (\al_1) \cdot \al} \rangle \top \land \MNF{1}{k}.
\]
In particular, $\la n_0^\al \ra \ \Big(\la n_0^{\al_0} \ra \top \wedge \ldots \wedge \la n_k^{\al_k} \ra \top \Big) \ \vdash \ \la n_0^{\al_0 + e^{n_1-n_0} (\al_1) \cdot \al} \rangle \top $. \\
\medskip

For the other direction, note that 
\[
\begin{array}{ll}
\la n_0^{\al_0 + e^{n_1-n_0} (\al_1) \cdot \al} \rangle \top \ \land & \la n_0^\al \ra \, \Big( \la n_1^{\al_1} \ra \top \land \ldots \land \la n_k^{\al_k} \ra \top \Big) \\
\ & \ \ \ \ \vdash \ \ \la n_0^{\al_0 + e^{n_1-n_0} (\al_1) \cdot \al} \rangle \top \land \MNF{1}{k}.\\
\end{array}
\]
Thus, reusing reasoning from above we see that by Theorem \ref{MNFINC} together with Axiom \ref{coadditive}:
\[
\la n_0^{\al_0 + e^{n_1-n_0} (\al_1) \cdot \al} \rangle \top \land \MNF{1}{k} \equiv \la n_0^\al \ra \la n_0^{\delta_0} \ra \la n_1^{\delta_1} \rangle \ldots \la n_k^{\al_k} \ra \top.
\]
We now use Theorem \ref{INCMNF} and Rule \ref{r:3} again to obtain
\[
\la n_0^{\al_0 + e^{n_1-n_0} (\al_1) \cdot \al} \rangle \top \land \MNF{1}{k} \equiv \la n_0^\al \ra (\MNF{0}{k}).
\]
Thus,
\[
\begin{array}{ll}
\la n_0^{\al_0 + e^{n_1-n_0} (\al_1) \cdot \al} \rangle \top \ \land & \la n_0^\al \ra \, \Big( \la n_1^{\al_1} \ra \top \land \ldots \land \la n_k^{\al_k} \ra \top \Big) \\
\ &\vdash \ \  \la n_0^\al \ra \, \Big(\MNF{0}{k} \Big)\\
\end{array}
\]
which concludes the proof.
\end{proof}

Thus, this proposition allows us to drag a small modality over an {\sf MNF} that starts with the same modality. We note that the degenerate version of the proposition would be with $k=0$ which is covered by Axiom \ref{coadditive}:
\[
\la n_0^\alpha \ra \la n_0^{\alpha_0} \ra \top \ \ \equiv \ \ \la n_0^{\alpha_0 + \alpha} \ra \top.
\] 

\begin{theorem}\label{theorem:MNFsClosedUnderOrdinalModalities}
The set of {\sf MNF}s is, modulo provable equivalence, closed under putting ordinal modalities up front.
\end{theorem}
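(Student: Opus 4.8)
The goal is to show that the set of {\sf MNF}s is closed (modulo provable equivalence in $\TPr$) under prefixing an arbitrary ordinal modality $\la n^\al \ra$. Given the structure of the section, the plan is to combine the three ingredients already assembled: Axiom \ref{MS1} (genuine Schmerl, which handles the case $n<n_0$), Proposition \ref{theorem:PseudoSchmerlOne} (PS1, the case $n>n_0$), and Proposition \ref{theorem:smallModalityOverEqualMNF} (the case $n=n_0$), together with Corollary \ref{theorem:MNFsClosedUnderConjunctions} (closure under conjunction).

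\medskip

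\textbf{Setup and induction.} Let $\psi := \MNF{0}{k} \in {\sf MNF}$ and fix a modality $\la n^\al \ra$ with $\al > 0$ (the case $\al = 0$ is trivial by our convention that $\la n^0 \ra \varphi$ is just $\varphi$). I want to produce a ${\sf MNF}$ equivalent to $\la n^\al \ra \psi$. I would argue by induction on $k$, and within the inductive step split into the three cases according to how $n$ compares with $n_0$, the smallest base occurring in $\psi$.

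\textbf{The three cases.} If $n < n_0$, then Axiom \ref{MS1} directly gives
\[
\la n^\al \ra \psi \ \equiv \ \la n^{e^{n_0 - n}(\al_0)\cdot(1+\al)} \ra \top \ \land \ \psi,
\]
and since $n < n_0$ the exponent is of precisely the form required by clause \ref{item:TechnicalMNFclause} of Definition \ref{definition:MNFs} (take $\be = \al - 1$ when $\al$ is a successor, adjusting for the $2+\be$ shape), so the right-hand side is already a ${\sf MNF}$. If $n = n_0$, Proposition \ref{theorem:smallModalityOverEqualMNF} rewrites $\la n_0^\al \ra \psi$ as a monomial conjoined with $\la n_0^\al \ra \MNF{1}{k}$, and I apply the induction hypothesis to the shorter tail $\MNF{1}{k}$ (whose smallest base is now $n_1 > n_0 = n$, so this reduces to the first case) and then reattach the leading monomial via Corollary \ref{theorem:MNFsClosedUnderConjunctions}. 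If $n > n_0$, Proposition \ref{theorem:PseudoSchmerlOne} (PS1) peels off $\la n_0^{\al_0}\ra\top$ as an updated monomial $\la n_0^{\al_0 + e^{n-n_0}(\al)}\ra\top$ and leaves $\la n^\al \ra \MNF{1}{k}$; again I invoke the induction hypothesis on the tail and conjoin using Corollary \ref{theorem:MNFsClosedUnderConjunctions}.

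\textbf{The main obstacle.} The delicate point is not any single case but ensuring the recursion terminates cleanly and that the reassembly actually yields a ${\sf MNF}$ rather than merely a conjunction of monomials. In the $n>n_0$ and $n=n_0$ branches the induction hypothesis only gives me \emph{some} ${\sf MNF}$ equivalent to $\la n^\al \ra \MNF{1}{k}$; conjoining the extra leading monomial and then normalizing requires Corollary \ref{theorem:MNFsClosedUnderConjunctions} to absorb it correctly, and I must check that the base of that extra monomial is small enough relative to the normalized tail for Lemma \ref{conNF} to apply in the right order. I expect the bookkeeping here — tracking that bases strictly increase and that the exponent-side divisibility conditions of clause \ref{item:TechnicalMNFclause} survive each rewrite — to be the real work, whereas the three rewrite principles themselves do all the genuine logical lifting.
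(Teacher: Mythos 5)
Your proposal is correct and takes essentially the same route as the paper: the identical three-way case split (Axiom \ref{MS1} for $n<n_0$, Proposition \ref{theorem:smallModalityOverEqualMNF} for $n=n_0$, Proposition \ref{theorem:PseudoSchmerlOne} for $n>n_0$), followed by normalizing the resulting conjunction of monomials via the conjunction-closure corollaries; the paper merely phrases your induction on $k$ as iterated ``pulling'' of $\la n^\al \ra$ over the smaller bases, with a single normalization at the end. Your worry about the ordering needed for Lemma \ref{conNF} is a non-issue, since Corollary \ref{theorem:conjunctionOfMonomialsHasMNF} already handles conjunctions of monomials in arbitrary order.
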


\begin{proof}
For an arbitrary {\sf MNF} $\MNF{0}{k}$ and arbitrary ordinal modality $\la n^\alpha \ra$ we see that $\la n^\alpha \ra \Big( \MNF{0}{k} \Big)$ is equivalent to some {\sf MNF}. The proof follows directly from our earlier results.

By Pseudo Schmerl 1, Proposition \ref{theorem:PseudoSchmerlOne}, we can `pull' the ordinal modality $\la n^\alpha \ra$ over the smaller ordinal modalities of $\MNF{0}{k}$ until $n\leq n_i$ for some $0\leq i \leq k$. 

In case $n=n_i$ we can apply Proposition \ref{theorem:smallModalityOverEqualMNF} (or the co-additivity axiom for the degenerate case) and we can apply directly Schmerl's axiom when $n<n_i$. The result yields us a conjunction of monomials which by Corollary \ref{theorem:conjunctionOfMonomialsHasMNF} we know is equivalent to some {\sf MNF}.
\end{proof}

\subsection{Each formula is equivalent to a normal form}

In this subsection, we shall combine all the earlier results of this sectio to obtain the, by now, simple corollary that every formula $\varphi$ is equivalent to a ${\sf MNF}$ formula. As a corollary we will obtain that $\varphi$ is also equivalent to an ordinal worm $A$ in ${\sf INF}$. However, to establish the uniqueness of ${\sf MNF}$'s and ${\sf INF}$'s the arithmetical soundness of the system is required. Hence, the uniqueness will be discussed later.

\begin{theorem} \label{MNFTHEOREM}
For every formula $\varphi$, there is $\psi \in {\sf MNF}$ such that $\varphi \equiv \psi$.
\end{theorem}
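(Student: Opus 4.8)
The plan is to prove the statement by structural induction on the formula $\varphi$, following exactly the recursive definition of $\FLE$. The three clauses of that definition—$\top$, conjunction, and prefixing an ordinal modality—match precisely the three closure results we have already assembled, so the induction will be short.

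First I would treat the base case: $\varphi = \top$. By clause (i) of Definition \ref{definition:MNFs} we have $\top \in {\sf MNF}$, so we may take $\psi = \top$ and $\varphi \equiv \psi$ is immediate from the reflexivity axiom.

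For the inductive step I would split according to the outermost constructor of $\varphi$. Suppose $\varphi = \varphi_1 \wedge \varphi_2$. By the induction hypothesis there are ${\sf MNF}$ formulas $\psi_1, \psi_2$ with $\varphi_1 \equiv \psi_1$ and $\varphi_2 \equiv \psi_2$; using Rule \ref{r:1} together with Rule \ref{r:2} and the conjunction-elimination Axiom \ref{conjel} one checks that $\varphi_1 \wedge \varphi_2 \equiv \psi_1 \wedge \psi_2$. Now $\psi_1 \wedge \psi_2$ is a conjunction of two ${\sf MNF}$s; by Corollary \ref{theorem:MNFsClosedUnderConjunctions} (that ${\sf MNF}$s are closed under conjunction) there is a single $\psi \in {\sf MNF}$ with $\psi_1 \wedge \psi_2 \equiv \psi$, and transitivity of $\equiv$ gives $\varphi \equiv \psi$. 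Suppose instead $\varphi = \la n^\alpha \ra \varphi'$. By the induction hypothesis there is $\psi' \in {\sf MNF}$ with $\varphi' \equiv \psi'$, whence by Rule \ref{r:3} (monotonicity under diamonds in both directions) we obtain $\la n^\alpha \ra \varphi' \equiv \la n^\alpha \ra \psi'$. Finally, by Theorem \ref{theorem:MNFsClosedUnderOrdinalModalities} the set of ${\sf MNF}$s is closed under putting an ordinal modality up front, so there is $\psi \in {\sf MNF}$ with $\la n^\alpha \ra \psi' \equiv \psi$; transitivity yields $\varphi \equiv \psi$, completing the induction.

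The substantive mathematical content has all been discharged in the earlier subsections—Corollary \ref{theorem:MNFsClosedUnderConjunctions} and Theorem \ref{theorem:MNFsClosedUnderOrdinalModalities} are exactly the two closure properties demanded by the two non-trivial constructors—so this final proof is a routine assembly. The only point deserving care is the congruence of $\equiv$ with respect to $\wedge$ and the diamonds, i.e.\ that equivalents may be substituted inside a conjunction and under an ordinal modality; this is guaranteed by Rules \ref{r:1}, \ref{r:2} and \ref{r:3}, so no genuine obstacle remains.
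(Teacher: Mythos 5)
Your proof is correct and follows essentially the same route as the paper: induction on the structure of $\varphi$, dispatching the conjunctive case via Corollary \ref{theorem:MNFsClosedUnderConjunctions} and the modality case via Theorem \ref{theorem:MNFsClosedUnderOrdinalModalities}. The paper states this in one line; you have merely made explicit the routine congruence steps (Rules \ref{r:1}--\ref{r:3} with Axiom \ref{conjel}) that the paper leaves implicit.
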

\begin{proof}
By induction on $\varphi$ applying Corollary \ref{theorem:MNFsClosedUnderConjunctions} and Theorem \ref{theorem:MNFsClosedUnderOrdinalModalities} for the conjunctive and ordianl modality cases, respectively.
\end{proof}

Consequently, we see that each formula also has an INF.

\begin{cor}
For every formula $\varphi$, there is $A \in {\sf INF}$ such that $\varphi \equiv A$.
\end{cor}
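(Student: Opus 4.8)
The plan is to derive this corollary directly from Theorem~\ref{MNFTHEOREM} together with Theorem~\ref{MNFINC}, exploiting the fact that these two results have already assembled all the machinery needed. The key observation is that Theorem~\ref{MNFTHEOREM} reduces the problem for an \emph{arbitrary} formula $\varphi$ to the case of a $\sf MNF$, and Theorem~\ref{MNFINC} then converts any $\sf MNF$ into an equivalent $\sf INF$ ordinal worm. Since provable equivalence $\equiv$ is transitive (which follows from Rule~\ref{r:2}), chaining these two equivalences closes the argument.

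Concretely, first I would invoke Theorem~\ref{MNFTHEOREM} to obtain a formula $\psi \in {\sf MNF}$ with $\varphi \equiv \psi$. Writing $\psi$ explicitly in the form $\MNF{0}{k}$ guaranteed by membership in $\sf MNF$, I would then apply Theorem~\ref{MNFINC} to this $\psi$ to produce an ordinal worm $A \in {\sf INF}$ with $\psi \equiv A$ (the theorem even furnishes the exponents $\be_i$ explicitly via the hyper-exponential division, though for this corollary only the existence of $A$ is needed). Finally, from $\varphi \equiv \psi$ and $\psi \equiv A$ I conclude $\varphi \equiv A$ by transitivity of $\equiv$, which unpacks to two applications of Rule~\ref{r:2}: from $\varphi \vdash \psi$ and $\psi \vdash A$ we get $\varphi \vdash A$, and symmetrically from $A \vdash \psi$ and $\psi \vdash \varphi$ we get $A \vdash \varphi$.

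There is essentially no obstacle here: the corollary is a one-line composition of two substantial theorems, and all the genuine work has been done in proving Theorem~\ref{MNFTHEOREM} (closure of $\sf MNF$s under conjunction and ordinal diamonds) and Theorem~\ref{MNFINC} (the $\sf MNF$-to-$\sf INF$ translation). The only minor point worth noting is that $\psi$ must be literally a $\sf MNF$ in the syntactic shape $\MNF{0}{k}$ so that Theorem~\ref{MNFINC} applies verbatim; this is immediate since Theorem~\ref{MNFTHEOREM} delivers exactly such a $\psi$. Thus the proof is simply: apply Theorem~\ref{MNFTHEOREM}, then Theorem~\ref{MNFINC}, then transitivity.
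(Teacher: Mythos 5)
Your proof is correct and is precisely the argument the paper intends: the corollary is stated as an immediate consequence ("Consequently...") of Theorem \ref{MNFTHEOREM}, composed with the $\sf MNF$-to-$\sf INF$ translation of Theorem \ref{MNFINC} and transitivity of $\equiv$ via Rule \ref{r:2}. Nothing is missing; the degenerate case $\psi = \top$ is handled trivially since $\top \in {\sf INF}$.
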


\subsection{Fine-tuning}
The current section contains some additional observations building forth on what we have obtained so far in this section. These results are not needed for the remainder of the paper.

We obtain a nice corollory from the proposition which we shall call \emph{Pseudo Schmerl 2}.

\begin{cor}\label{PS2}
Let $\MNF{0}{k}$ be in $\sf MNF$ with $k\geq 1$.
The following principle is derivable in $\TPr$:
\[
\begin{array}{llr}
\la n_0^\al \ra \, \Big(\, \la n_0^{\al_0} \ra \top & \wedge \ \ldots \ \wedge \la n_k^{\al_k} \ra \top \, \Big) & \text{(PS2)} \\
 & \equiv \la n_0^{\al_0 + e^{n_1-n_0} (\al_1) \cdot \al} \rangle \top \land \la n_1^{\al_1} \ra \top \land \ldots \land \la n_k^{\al_k} \ra \top.& \\
\end{array}
\]
\end{cor}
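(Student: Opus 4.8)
The plan is to read off Pseudo Schmerl 2 almost directly from Proposition \ref{theorem:smallModalityOverEqualMNF}, whose only difference from (PS2) is that its right-hand tail still carries the modality $\la n_0^\al \ra$ in front of $\MNF{1}{k}$. Writing $\mu := \la n_0^{\al_0 + e^{n_1-n_0}(\al_1)\cdot \al} \ra \top$, Proposition \ref{theorem:smallModalityOverEqualMNF} gives $\la n_0^\al \ra (\MNF{0}{k}) \equiv \mu \land \la n_0^\al \ra (\MNF{1}{k})$, so it suffices to establish the auxiliary equivalence $\mu \land \la n_0^\al \ra (\MNF{1}{k}) \equiv \mu \land \MNF{1}{k}$ and then chain the two.

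For the forward inclusion of the auxiliary equivalence I would simply discard the leading modality: by Proposition \ref{prop}, Item \ref{prop3}, we have $\la n_0^\al \ra (\MNF{1}{k}) \vdash \MNF{1}{k} \land \la n_0^\al \ra \top$, and conjunction elimination (Axiom \ref{conjel}) then yields $\la n_0^\al \ra (\MNF{1}{k}) \vdash \MNF{1}{k}$, whence $\mu \land \la n_0^\al \ra (\MNF{1}{k}) \vdash \mu \land \MNF{1}{k}$.

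The reverse inclusion is the only real content, and I expect it to be the main obstacle, since I must reinstate the modality $\la n_0^\al \ra$ in front of the tail using only the information stored in $\mu$. Here I would reuse the right-to-left computation already carried out inside the proof of Proposition \ref{theorem:smallModalityOverEqualMNF}, which passes through Theorem \ref{MNFINC}, the co-additivity Axiom \ref{coadditive}, Theorem \ref{INCMNF} and Rule \ref{r:3} to show $\mu \land \MNF{1}{k} \equiv \la n_0^\al \ra (\MNF{0}{k})$. Granting this, a single application of Rule \ref{r:3} to the conjunction-elimination sequent $\MNF{0}{k} \vdash \MNF{1}{k}$ gives $\la n_0^\al \ra (\MNF{0}{k}) \vdash \la n_0^\al \ra (\MNF{1}{k})$; combining this with the trivial $\mu \land \MNF{1}{k} \vdash \mu$ via Rule \ref{r:1} delivers $\mu \land \MNF{1}{k} \vdash \mu \land \la n_0^\al \ra (\MNF{1}{k})$, as required.

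Combining the two directions yields the auxiliary equivalence, and chaining it with Proposition \ref{theorem:smallModalityOverEqualMNF} gives (PS2). The one point to check carefully is that all the normal-form machinery invoked really applies, i.e. that $\MNF{0}{k}$ being a genuine {\sf MNF} with $k \geq 1$ guarantees that $n_1$ exists and that the divisions appearing in Theorems \ref{MNFINC} and \ref{INCMNF} are well-defined by Item \ref{item:TechnicalMNFclause} of Definition \ref{definition:MNFs}; beyond this, no derivation is needed that does not already appear in the proof of Proposition \ref{theorem:smallModalityOverEqualMNF}.
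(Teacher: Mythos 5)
Your proof is correct, and it shares its starting point with the paper's proof (both begin by invoking Proposition \ref{theorem:smallModalityOverEqualMNF}), but the way you discharge the residual modality $\la n_0^\al \ra$ in front of $\MNF{1}{k}$ is genuinely different. The paper finishes by applying the Schmerl axiom, Axiom \ref{MS1}, directly to $\la n_0^\al \ra \big( \MNF{1}{k} \big)$ --- legitimate since $n_0 < n_1$ and the tail $\MNF{1}{k}$ is again in $\sf MNF$ --- obtaining $\la n_0^{e^{n_1-n_0}(\al_1)\cdot(1+\al)} \ra \top \land \MNF{1}{k}$, and then absorbing the new monomial into $\mu := \la n_0^{\al_0 + e^{n_1-n_0}(\al_1)\cdot \al} \ra \top$ by monotonicity (Axiom \ref{mon}), using that the $\sf MNF$ condition forces $\al_0 = e^{n_1-n_0}(\al_1)\cdot(2+\be)$ for some $\be$, whence $\al_0 + e^{n_1-n_0}(\al_1)\cdot\al \geq e^{n_1-n_0}(\al_1)\cdot(1+\al)$. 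You instead reach inside the right-to-left half of the proof of Proposition \ref{theorem:smallModalityOverEqualMNF} and cite the intermediate equivalence $\mu \land \MNF{1}{k} \equiv \la n_0^\al \ra \big( \MNF{0}{k} \big)$. That citation is accurate, but note that this equivalence \emph{is} precisely (PS2) with its two sides swapped; once you have invoked it, you are done, and all the surrounding scaffolding --- the auxiliary equivalence, the forward inclusion via Proposition \ref{prop}, the application of Rule \ref{r:3} to $\MNF{0}{k} \vdash \MNF{1}{k}$, and the final chaining back through Proposition \ref{theorem:smallModalityOverEqualMNF} --- is redundant. What the paper's completion buys is self-containedness at the level of stated results: it uses only the statement of Proposition \ref{theorem:smallModalityOverEqualMNF}, Axiom \ref{MS1}, and monotonicity, rather than an unlabelled step buried in an earlier proof. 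What your version buys is the avoidance of any fresh ordinal-arithmetic observation, at the cost of resting the entire argument on a step that already is the corollary being proved; if you keep your route, you should state that step as the conclusion rather than as a lemma toward one.
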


\begin{proof}
From Proposition \ref{theorem:smallModalityOverEqualMNF} we get that 
\[
\begin{array}{ll}
\la n_0^\al \ra \, \Big(\, \la n_0^{\al_0} \ra \top & \wedge \ \ldots \ \wedge \la n_k^{\al_k} \ra \top \, \Big)  \\
 & \equiv \la n_0^{\al_0 + e^{n_1-n_0} (\al_1) \cdot \al} \rangle \top \land 
\la n_0^\alpha \ra \, \Big(\,\la n_1^{\al_1} \ra \top \land \ldots \land \la n_k^{\al_k} \ra \top \, \Big). \\
\end{array}
\]
We conclude the proof by applying a Schmerl axiom, Axiom scheme \ref{MS1}, to 
\[
\la n_0^\alpha \ra \, \Big(\,\la n_1^{\al_1} \ra \top \land \ldots \land \la n_k^{\al_k} \ra \top \, \Big)
\]
observing that 
\[
\al_0 + e^{n_1-n_0} (\al_1) \cdot \al \geq e^{n_1-n_0} (\al_1) \cdot (1+ \al)
\]
since $\MNF{0}{k}$ is in $\sf MNF$, whence $\al_0 := e^{n_1-n_0} (\al_1) \cdot (2 + \beta)$ for some $\beta$.
\end{proof}

\begin{lemma}
Given $A := \la n^\al \ra \la m^\be \ra \top $, there is $\psi \in {\sf MNF}$ such that:
\begin{enumerate}
\item $A \equiv \psi$;
\item If $n < m$, then ${\sf N}$-${\sf mod}(\psi) = \{ n, m \}$ and  ${\sf O}$-${\sf mod}(\psi) = \{e^{m - n} (\be) \cdot (1 + \al), \be \}$;
\item If $n = m$, then ${\sf N}$-${\sf mod}(\psi) = \{ n \}$ and  ${\sf O}$-${\sf mod}(\psi) = \{ \be + \al \}$;
\item If $n > m$, then  ${\sf N}$-${\sf mod}(\psi) = \{ n \}$ and ${\sf O}$-${\sf mod}(\psi) = \{ \al \}$ or  ${\sf N}$-${\sf mod}(\psi) = \{ n, m \}$ and ${\sf O}$-${\sf mod}(\psi) \subseteq \{ \be + e^{n - m} (\al), \al\}$.
\end{enumerate}
\end{lemma}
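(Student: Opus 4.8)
The plan is to prove this lemma by a straightforward case analysis on the relationship between the bases $n$ and $m$, reducing each case to the normal-form machinery already established in this section. The formula $A := \la n^\al \ra \la m^\be \ra \top$ is an ordinal worm of length $2$, and our goal is merely to find an equivalent {\sf MNF} and to read off its bases and exponents. First I would dispose of the trivial degenerate cases: if $\be = 0$ then $\la m^\be \ra \top$ is just $\top$ and $A \equiv \la n^\al \ra \top$, a monomial; similarly if $\al = 0$. Assuming both exponents are nonzero, I would then split into the three cases stated.

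For the case $n < m$, note that $\la m^\be \ra \top$ is already an {\sf MNF} (a single monomial), so putting the smaller modality $\la n^\al \ra$ in front yields, directly by Schmerl's Axiom \ref{MS1} with $k = 0$, the equivalence $A \equiv \la n^{e^{m-n}(\be) \cdot (1 + \al)} \ra \top \land \la m^\be \ra \top$. This gives the claimed {\sf N}-{\sf mod} and {\sf O}-{\sf mod} sets immediately. For the case $n = m$, I would simply invoke the co-additivity Axiom \ref{coadditive}, which gives $\la n^\al \ra \la n^\be \ra \top \equiv \la n^{\be + \al} \ra \top$, a single monomial with base $\{n\}$ and exponent $\{\be + \al\}$. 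Both of these cases are immediate from single axioms.

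The case $n > m$ is the one requiring actual work, and it is where I expect the main obstacle to lie, since here we are placing a \emph{larger} modality over a smaller one. The natural tool is Pseudo Schmerl 1 (Proposition \ref{theorem:PseudoSchmerlOne}) in its degenerate form $k = 0$, which states that $\la n^\al \ra \la m^{\be} \ra \top \equiv \la m^{\be + e^{n-m}(\al)} \ra \top \land \la n^\al \ra \top$. This accounts for the second alternative in the statement, giving bases $\{n, m\}$ and exponents contained in $\{\be + e^{n-m}(\al), \al\}$. The subtlety — and the reason the lemma hedges with ``${\sf O}\text{-}{\sf mod}(\psi) \subseteq$'' rather than equality, and offers the first alternative with base $\{n\}$ alone — is that the resulting conjunction of two monomials need not itself already be in {\sf MNF}: by Corollary \ref{theorem:conjunctionOfMonomialsHasMNF} it is equivalent to \emph{some} {\sf MNF} $\psi$, but the normalization process (via Lemma \ref{conNF}) may absorb one monomial into the other when the technical Item \ref{item:TechnicalMNFclause} condition fails. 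In particular, if $\la n^\al \ra \top$ is already implied by $\la m^{\be + e^{n-m}(\al)} \ra \top$ — which happens precisely when $e^{n-m}$ applied to the larger exponent dominates $\al$ appropriately — the monomial $\la n^\al \ra \top$ drops out, leaving base $\{n\}$ with exponent $\{\al\}$, or the exponents may be trimmed, explaining the $\subseteq$.

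Thus the heart of the argument for $n > m$ is to apply Pseudo Schmerl 1 and then feed the resulting conjunction of monomials through the {\sf MNF}-normalization of Corollary \ref{theorem:conjunctionOfMonomialsHasMNF}, carefully tracking which monomials survive; the stated two-way disjunction and the $\subseteq$ are exactly what accommodates both the case where both monomials remain and the case where one is absorbed. I would verify the exponent bookkeeping by appealing to the monotonicity Axiom \ref{mon} and the defining condition of {\sf MNF} to confirm that the surviving bases and exponents fall within the claimed sets, which is a routine (if slightly tedious) check rather than a conceptual difficulty.
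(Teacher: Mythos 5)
Your skeleton coincides with the paper's proof: the cases $n<m$ and $n=m$ are dispatched by Schmerl's axiom (Axiom \ref{MS1}) and co-additivity (Axiom \ref{coadditive}), and the case $n>m$ by Pseudo Schmerl 1 (Proposition \ref{theorem:PseudoSchmerlOne}) followed by normalizing the resulting conjunction of two monomials. The gap is in your account of that normalization, which is backwards and internally inconsistent. You claim absorption happens when ``$\la n^\al \ra \top$ is already implied by $\la m^{\be + e^{n-m}(\al)} \ra \top$'', so that ``the monomial $\la n^\al \ra \top$ drops out, leaving base $\{n\}$ with exponent $\{\al\}$''. Neither half can be right: in $\TPr$ (and arithmetically, by soundness -- compare Proposition \ref{BoundsMNF}, Item \ref{firstbound}) a monomial with smaller base never entails one with strictly larger base and nonzero exponent; the only cross-base entailment available, Axiom \ref{omega}, goes from larger base to smaller base. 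Moreover, if $\la n^\al \ra \top$ really did drop out, the survivor would be $\la m^{\be + e^{n-m}(\al)} \ra \top$, i.e.\ base $\{m\}$ and exponent $\be + e^{n-m}(\al)$, contradicting the very alternative of the lemma (base $\{n\}$, exponent $\{\al\}$) that you are trying to account for.

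The correct bookkeeping -- which is the paper's proof -- is the mirror image. By Axiom \ref{omega}, $\la n^\al \ra \top \vdash \la m^{e^{n-m}(\al)} \ra \top$. Hence if $\be < e^{n-m}(\al)$, then, $e^{n-m}(\al)$ being additively indecomposable, $\be + e^{n-m}(\al) = e^{n-m}(\al)$, and it is the $m$-monomial that is redundant: $\la m^{\be + e^{n-m}(\al)} \ra \top \land \la n^\al \ra \top \equiv \la n^\al \ra \top$, which yields ${\sf N}\text{-}{\sf mod}(\psi) = \{n\}$ and ${\sf O}\text{-}{\sf mod}(\psi) = \{\al\}$. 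If instead $\be \geq e^{n-m}(\al)$, then $\be + e^{n-m}(\al)$ is of the form $e^{n-m}(\al)\cdot(2+\gamma)$, so the two-monomial conjunction already satisfies clause \ref{item:TechnicalMNFclause} of Definition \ref{definition:MNFs} and is itself the desired {\sf MNF}; no trimming occurs. Deferring to Lemma \ref{conNF} and Corollary \ref{theorem:conjunctionOfMonomialsHasMNF} would in fact produce the right answer, but only because the proof of Lemma \ref{conNF} performs exactly this absorption rather than the one you describe; carried out as written, your ``careful tracking'' would identify the wrong surviving monomial.
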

\begin{proof}Proof goes by case distinction. The cases $n < m$ and $n = m$ are straightforward. For the case case $n > m$, notice that applying \hyperref[PS1]{(PS1)}, we have that 
\[
\la n^\al \ra \, \la m^\be \ra \top  \equiv \la m^{\be + e^{n - m} (\al)} \ra \top \, \land \, \la n^\al \ra \top. 
\] 
If $\be < e^{n - m} (\al)$, then 
\[
\la m^{\be + e^{n - m} (\al)} \ra \top \, \land  \, \la n^\al \ra \top \equiv \la n^\al \ra \top
\] 
which is in ${\sf MNF}$, and obviously ${\sf N}$-${\sf mod}(\psi) = \{ n \}$ and ${\sf O}$-${\sf mod}(\psi) = \{ \al \}$. Otherwise, notice that $\la m^{\be + e^{n - m} (\al)} \rangle \top \land \la n^\al \ra \top $ is indeed a ${\sf MNF}$ formula satisfying the required conditions.
\end{proof}

The following Proposition is obtained by means of Axiom \ref{MS1}, and principles \hyperref[PS1]{(PS1)} and \hyperref[PS2]{(PS2)}. The goal is to show how given an ordinal modality $\la n^\al \ra$ and $\psi \in {\sf MNF}$, we can obtain a new $\psi' \in {\sf MNF}$ that satisfies some conditions and which is equivalent to $\la n^\al \ra \, \psi$. 

\begin{proposition}
For any $\psi := \MNF{0}{k} \in {\sf MNF}$, with $k \geq 1$, and $0 < \al < \varepsilon_0$, there is $\psi' \in {\sf MNF}$ such that:
\begin{itemize}
\item $\la n^\al \ra \psi \equiv \psi' $;
\item If $n < n_0$, then: 
\begin{itemize} \item ${\sf N}$-${\sf mod}(\psi') = \{ n \} \cup {\sf N\text{-}mod}(\psi)$ and 
	  \item ${\sf O}$-${\sf mod}(\psi') = \{ e^{n_0 - n} (\al_0) \cdot (1 + \al)  \} \cup {\sf O\text{-}mod}(\psi)$;
\end{itemize}	  
\item If $n = n_0$,  then: 
\begin{itemize} \item ${\sf N}$-${\sf mod}(\psi') = {\sf N\text{-}mod}(\psi)$ and 
	  \item ${\sf O}$-${\sf mod}(\psi') = \big(\, {\sf O\text{-}mod}(\psi) - \{\al_0\} \, \big)  \cup \{ \al_0 + e^{n_1 - n_0} (\al_1 ) \cdot (\al) \}$;
	  \end{itemize}	
\item If $n > n_0$, then ${\sf N\text{-}mod}(\psi') \subseteq \{ n\} \cup {\sf N\text{-}mod}(\psi)$.
\end{itemize}	
\label{modNF}
\end{proposition}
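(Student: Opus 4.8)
The plan is to run a case distinction on the comparison of $n$ with $n_0$, mirroring the case split already used in the proof of Theorem \ref{theorem:MNFsClosedUnderOrdinalModalities}. Since that theorem already guarantees the existence of \emph{some} equivalent $\psi' \in {\sf MNF}$, the genuine work here is only to exhibit a concrete $\psi'$ in each case and read off ${\sf N\text{-}mod}(\psi')$ and ${\sf O\text{-}mod}(\psi')$. Throughout, I would lean on the three key principles of the section: Schmerl's Axiom \ref{MS1}, Pseudo Schmerl 1 (Proposition \ref{theorem:PseudoSchmerlOne}), and Pseudo Schmerl 2 (Corollary \ref{PS2}).

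For the case $n < n_0$ I would apply Axiom \ref{MS1} directly, obtaining $\langle n^\al \rangle \psi \equiv \langle n^{e^{n_0-n}(\al_0)\cdot(1+\al)} \rangle \top \land \psi$, and then verify that the right-hand side is itself in ${\sf MNF}$: since $0 < \al$ we have $1+\al \geq 2$ and $1+\al = 2+\be$ for a suitable $\be$, so the technical clause \ref{item:TechnicalMNFclause} of Definition \ref{definition:MNFs} is met. Taking this formula as $\psi'$, both ${\sf N\text{-}mod}(\psi') = \{n\} \cup {\sf N\text{-}mod}(\psi)$ and ${\sf O\text{-}mod}(\psi') = \{ e^{n_0-n}(\al_0)\cdot(1+\al)\} \cup {\sf O\text{-}mod}(\psi)$ are immediate. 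For $n = n_0$ I would invoke Corollary \ref{PS2} to set $\psi' := \langle n_0^{\al_0 + e^{n_1-n_0}(\al_1)\cdot\al} \rangle \top \land \langle n_1^{\al_1} \rangle \top \land \ldots \land \langle n_k^{\al_k} \rangle \top$; writing $\al_0 = e^{n_1-n_0}(\al_1)\cdot(2+\be')$ (valid as $\psi \in {\sf MNF}$) and using left distributivity of ordinal multiplication shows the new head exponent has the required form $e^{n_1-n_0}(\al_1)\cdot(2+(\be'+\al))$, so $\psi' \in {\sf MNF}$. Here the base set is unchanged and the exponent set is obtained by replacing $\al_0$ with the new head exponent, as claimed.

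The main work, and the main obstacle, is the case $n > n_0$. Here I would iterate Proposition \ref{theorem:PseudoSchmerlOne} to ``pull'' the modality $\langle n^\al \rangle$ rightward, peeling off one monomial of base $n_i$ at each step, until the moving modality reaches an index $i$ with $n \leq n_i$ (finishing with Axiom \ref{MS1} when $n < n_i$, or with Corollary \ref{PS2} when $n = n_i$), or until all monomials are exhausted and a bare $\langle n^\al \rangle \top$ remains. The crucial bookkeeping observation is that every base occurring in any intermediate conjunction lies in $\{n\} \cup {\sf N\text{-}mod}(\psi)$: each application of (PS1), Schmerl, or (PS2) only reuses bases among $n, n_0, \ldots, n_k$. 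Converting the resulting conjunction of monomials into an ${\sf MNF}$ via Corollary \ref{theorem:conjunctionOfMonomialsHasMNF}, whose inductive step (Lemma \ref{conNF}) can only delete bases but never create them, preserves this containment, yielding ${\sf N\text{-}mod}(\psi') \subseteq \{n\} \cup {\sf N\text{-}mod}(\psi)$.

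I expect the delicate point to be precisely why this last case gives only the inclusion rather than an equality: the absorption occurring inside Lemma \ref{conNF} may eliminate the base $n$, or some base $n_i$, whenever the accumulated head exponent turns out to be dominated, so one cannot pin down ${\sf O\text{-}mod}(\psi')$ explicitly nor claim the reverse inclusion. Keeping careful track of which monomials survive the normalization, rather than trying to compute them, is therefore the heart of the argument.
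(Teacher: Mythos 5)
Your proof is correct, and in the cases $n < n_0$ and $n = n_0$ it is the paper's proof: a direct application of Axiom \ref{MS1}, respectively of (PS2) (Corollary \ref{PS2}), together with the same verification of the technical clause \ref{item:TechnicalMNFclause} of Definition \ref{definition:MNFs} via $1+\al = 2+\be$ and left distributivity. Where you genuinely diverge is the case $n > n_0$. The paper runs a fresh induction on $k$: it peels off a single monomial with (PS1), uses the unlabelled two-monomial lemma on $\la n^\al \ra \la m^\be \ra \top$ preceding the proposition as base case, and at each level redoes an explicit Cantor-normal-form analysis of the head exponent $\al_0 + e^{n-n_0}(\al)$ against $e^{m-n_0}(\gamma)$, where $\la m^\gamma \ra \top$ is the least monomial of the {\sf MNF} delivered by the inductive hypothesis, to decide which monomials survive absorption. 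You instead pull $\la n^\al \ra$ all the way through in one pass, exactly as in the proof of Theorem \ref{theorem:MNFsClosedUnderOrdinalModalities} (do state explicitly that the finishing move when $n = n_k$ with no monomials to the right is co-additivity, Axiom \ref{coadditive}, rather than (PS2)), and then normalize the resulting conjunction of monomials once, observing that the inductive step of Corollary \ref{theorem:conjunctionOfMonomialsHasMNF}, namely Lemma \ref{conNF}, can delete the incoming base or rewrite its exponent but never introduces a base. This organization buys economy: no new induction and no repetition of the CNF absorption analysis, at the price of appealing to the \emph{proofs}, not merely the statements, of Lemma \ref{conNF} and Corollary \ref{theorem:conjunctionOfMonomialsHasMNF}, since containment of bases is not asserted by either result; the paper's inline analysis is self-contained and slightly more informative about which monomial actually survives. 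Both routes yield, for the same reason, only the inclusion ${\sf N}\text{-}{\sf mod}(\psi') \subseteq \{n\} \cup {\sf N}\text{-}{\sf mod}(\psi)$ in this case, and you identify that reason correctly.
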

\begin{proof}
Notice that for $n < n_0$ we just need to apply Axiom \ref{MS1}, and for $n = n_0$, we apply \hyperref[PS2]{(PS2)}. For $n > n_0$ we proceed by induction on $k$. For $k=1$, applying \hyperref[PS1]{(PS1)}, we get that:
\begin{tabbing}
$\la n^\al \ra \, \big(\, \la n_0^{\al_0}  \ra \top \, \land \, \la n_1^{\al_1} \ra \top \, \big)$ \= $\equiv \la n_0^{\al_0 + e^{n - n_0}(\al)} \ra \top \, \land \, \la n^\al \ra \la n_1^{\al_1} \ra \top$; \\[0.25cm]

\ \> $\equiv \la n_0^{\al_0 + e^{n - n_0}(\al)} \ra \top \wedge \psi$\\
\ \>  \hspace{1.5cm} with $\psi \in {\sf MNF}$, by previous Lemma.
\end{tabbing}
Consider the left-most monomial, $\la m^\gamma \ra \top$, occurring in $\psi$, that is, $$m := \min {\sf N}\text{-}{\sf mod}(\psi).$$ We distinguish the following cases:
\begin{enumerate}
\item If $\al_0 + e^{n - n_0}(\al) \leq e^{m - n_0}(\gamma)$ then  clearly, $\la n_0^{\al_0 + e^{n - n_0}(\al)} \ra \top \wedge \psi \equiv \psi$. 
\item If $\al_0 + e^{n - n_0}(\al) > e^{m - n_0}(\gamma)$, then consider the Cantor normal form of $\al_0 + e^{n_0 - n}(\al)$,  $[\, \al_0 + e^{n_0 - n}(\al) \, ]_{\sf CNF} := \delta_0 + \ldots + \delta_i$. We can distinguish two subcases: 
\begin{itemize} 
	  \item If $\delta_j \geq e^{m - n_0}(\gamma)$ for $j, \ 0 \leq j < i$, then $\la n_0^{\al_0 + e^{n_0 - n}(\al)} \ra \top \land \psi \in {\sf MNF}$ satisfying the conditions for ${\sf N\text{-}mod}$;
      \item If there is $j, \ 0 \leq j \leq i$ such that $\delta_j < e^{m - n_0}(\gamma)$ and for all $j' < j$, $\delta_{j'} \geq e^{m - n_0}(\gamma)$, then 
\[
\la n_0^{\al_0 + e^{n_0 - n}(\al)} \ra \top \, \land \, \psi \equiv \la n_0^{\delta_0 + \ldots + \delta_{j-1} + e^{m - n_0}(\gamma)} \ra \top \, \land \, \psi
\] 
where  $\la n_0^{\delta_0 + \ldots + \delta_{j-1} + e^{m - n_0}(\gamma)} \ra \top \in {\sf MNF}$ and satisfies the required conditions for ${\sf N\text{-}mod}$.
\end{itemize}
\end{enumerate}

For the inductive step, combining \hyperref[PS1]{(PS1)} and I.H., we obtain that:
\[
\la n^\al \ra \, \big(\, \MNF{0}{k+1} \, \big) \equiv \la n_0^{\al_0 + e^{n - n_0}(\al)} \ra \top \, \land \, \psi
\] 
where $\psi \in {\sf MNF}$. Then, we follow the same reasoning we used for the base case. 
\end{proof}

\section{Arithmetical soundness of \TPr} \label{arithint}

In this section we shall prove that the logic $\TPr$ adequately describes graded Turing progressions. First we shall make the link between our logic and graded Turing progressions more precise by introducing the \emph{Formalized Turing Progression (FTP) Interpretation}.

\subsection{The formalized Turing progression interpretation}

The Formalized Turing Progression Interpretation  (FTP) is the representation of provable $\TPr$ sequents as the entailment between the corresponding first order theories within $\EA^+$ via the $\Pi_2$-sentence expressing such derivability. Thus, given $\varphi, \, \psi \in \FLE$ the intended interpretation of $\varphi \vdash \psi$ is the arithmetical statement:
\[
\EA^+ \vdash \forall x \ (\, \Box_{\Th_\psi} (x) \rightarrow \Box_{\Th_\varphi} (x) \, ).
\]

Hereinafter, we will adopt the following notation: by $T \subseteq U$ we denote the arithmetical formula $\forall x \ (, \Box_{T} (x) \rightarrow \Box_{U} (x) \, )$. Likewise, by $T \equiv U$, we denote the formula $\forall x \ (\, \Box_{T} (x) \leftrightarrow \Box_{U} (x) \, )$ and $T \equiv_n U$ stands for the formula $\forall x \in \Pi_{n+1} \ (\, \Box_{T} (x) \leftrightarrow \Box_{U} (x) \, )$.\\

Before proving soundness of \TPr~under the FTP interpretation we will present some useful facts and tools in this subsection. 
In Lemma 2.2 of \cite{Beklemishev:1995:LocalRefVSCon} a very simple yet very useful fact is shown: if a formula is provable in some Turing progression, then it is provable in the base theory together with a single consistency statement.

\begin{lemma}  \label{tools}
Provably in $\EA^+$ we have that
\[
\forall \al \succ 0 \ \forall x \ \Big(\, \Box_{T_n^\al} (x) \rightarrow \exists \, \gamma {\prec} \al \ \Box_T \, \big(\,\con_n(\, (T)_n^{\dot{\gamma}} \,) \dot{\rightarrow} x \big) \,\Big).
\] 
\end{lemma}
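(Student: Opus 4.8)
The plan is to first give the informal reduction and then indicate how to carry it out inside $\EA^+$. The crucial structural fact is that, apart from the axioms of $T$, the only axioms of $(T)_n^\al$ are the consistency statements $\con_n\big((T)_n^\be\big)$ with $\be \prec \al$, and that these are linearly preordered in logical strength by $\prec$. Indeed, if $\be' \preceq \be \prec \al$, then every axiom of $(T)_n^{\be'}$ is already an axiom of $(T)_n^\be$ (immediate from the $\tau$-defining equivalence together with transitivity of $\prec$), so that $(T)_n^{\be'} \subseteq (T)_n^\be$ and hence $\EA^+ \vdash \con_n\big((T)_n^{\be}\big) \rightarrow \con_n\big((T)_n^{\be'}\big)$, using that $n$-consistency is monotone downward under inclusion of theories.

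Granting this, the informal argument runs as follows. Suppose $\Box_{T_n^\al}(x)$ and fix a proof $p$ witnessing it. Being finite, $p$ uses only finitely many consistency-type axioms of $(T)_n^\al$, say $\con_n\big((T)_n^{\be_1}\big), \ldots, \con_n\big((T)_n^{\be_j}\big)$ with all $\be_i \prec \al$, the remaining axioms used lying in $T$. Let $\gamma := \max_{\preceq}\{\be_1, \ldots, \be_j\}$, taking $\gamma := 0$ if no consistency axiom is used (this is licit since $\al \succ 0$ is exactly $0 \prec \al$). By the deduction theorem, $T \vdash \bigwedge_{i} \con_n\big((T)_n^{\be_i}\big) \rightarrow x$, while the monotonicity fact yields $\EA^+ \vdash \con_n\big((T)_n^{\gamma}\big) \rightarrow \con_n\big((T)_n^{\be_i}\big)$ for every $i$ (trivially when $\be_i = \gamma$). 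Chaining these and recalling $\EA^+ \subseteq T$, we obtain $T \vdash \con_n\big((T)_n^{\gamma}\big) \rightarrow x$, i.e. $\Box_T\big(\con_n((T)_n^{\dot{\gamma}}) \dot{\rightarrow} x\big)$, as desired.

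To obtain the statement as a theorem of $\EA^+$, I would run this reasoning about the generic proof $p$ and generic witness $x$ inside $\EA^+$. The extraction of the finite set of consistency-axiom indices from the code of $p$, the selection of its $\prec$-maximum $\gamma$, and the verification $\gamma \prec \al$ are all elementary operations on codes and on the elementary presentation of $(D,\prec)$. The formalized deduction theorem and the passage from $p$ to a $T$-proof of $\con_n((T)_n^{\dot{\gamma}}) \dot{\rightarrow} x$ blow up proof sizes only elementarily, so they are available in $\EA^+$; this is precisely the kind of syntactic manipulation the super-exponential function was included to support.

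The main obstacle is the uniform, formalized monotonicity step: one must establish inside $\EA^+$ that $\forall \be \preceq \gamma\ \forall x\,\big(\tau_n^{\sigma}(\dot{\be}, x) \rightarrow \tau_n^{\sigma}(\dot{\gamma}, x)\big)$, and from it derive the provable implication between the corresponding $n$-consistency statements. This rests on the fixed-point characterization of the $\tau$-numerations and on transitivity of $\prec$ being verifiable in $\EA^+$, and it is here that the seemingly circular definition of the progression must be handled with care; everything else is bookkeeping on finite proofs in the spirit of Lemma~2.2 of \cite{Beklemishev:1995:LocalRefVSCon}.
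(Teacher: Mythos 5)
Your proposal is correct and is essentially the argument the paper relies on: the paper gives no proof of this lemma itself, merely calling it elementary and deferring to Lemma~2.2 of \cite{Beklemishev:1995:LocalRefVSCon}, and that cited proof is precisely your argument --- a proof in $(T)_n^\al$ uses finitely many consistency axioms $\con_n\big((T)_n^{\be_i}\big)$, these are provably linearly ordered in strength, so one takes the $\prec$-maximal index $\gamma$ (or $0$, using $\al \succ 0$) and applies the formalized deduction theorem, all with elementary bounds available in $\EA^+$. A point in your favour worth noting: you derive the needed monotonicity $\be' \preceq \be \rightarrow (T)_n^{\be'} \subseteq (T)_n^{\be}$ at the level of axiom sets directly from the fixed-point equation for $\tau$ and transitivity of $\prec$, rather than from Item~\ref{noLabel} of Lemma~\ref{theorem:basicFactsOfTuringProgressions}, which the paper deduces \emph{from} the present lemma and whose use here would have been circular.
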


The proof of this lemma is elementary as holds for the following.

\begin{lemma}\label{theorem:basicFactsOfTuringProgressions}
Let $T$ and $U$ be theories extending $\EA$. For any $n, m < \omega$ and $\delta \leq \al < \varepsilon_0$, provably in $\EA^+$:\label{BasicArith1}
\begin{enumerate}
\item \label{basic1} 
$T \subseteq U \ \to \  \big( \, \con_n (U)  \rightarrow \con_n (T) \,\big)$;

\item \label{p2}
$U \equiv_n T \ \ \rightarrow \ \ \big(\, \con_n(T) \leftrightarrow \con_n(U)\, \big)$; 

\item \label{noLabel}
$\beta \leq \gamma \ \rightarrow \ (T)^\beta_n \subseteq (T)^\gamma_n$;

\item \label{basic2}
$\con_n \big( \, (T)_m^\al \, \big) \rightarrow \con_n \big( \, (T)_m^\delta \, \big)$.
\end{enumerate} 
\end{lemma}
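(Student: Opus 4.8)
The plan is to establish the four items in the order (1), (2), (3), (4): items (1) and (2) are pure unfoldings of the arithmetical definitions, item (3) is the substantive one, and item (4) is an immediate consequence of (1) and (3). All reasoning takes place inside $\EA^+$. For item (1), unfolding $T \subseteq U$ gives $\forall x\,(\Box_T(x) \to \Box_U(x))$ and unfolding $\con_n(U)$ gives $\forall x \in \Pi_{n+1}\,(\Box_U(x) \to {\sf Tr}_{n+1}(x))$. Fixing an arbitrary $\Pi_{n+1}$-sentence $x$ with $\Box_T(x)$, the inclusion yields $\Box_U(x)$ and then $\con_n(U)$ yields ${\sf Tr}_{n+1}(x)$; as $x$ was arbitrary this is $\con_n(T)$. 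I note that only the restriction of the inclusion to $\Pi_{n+1}$-sentences was used. Item (2) then follows at once: $U \equiv_n T$ is exactly $\forall x \in \Pi_{n+1}\,(\Box_T(x) \leftrightarrow \Box_U(x))$, so the two reflection statements $\con_n(T)$ and $\con_n(U)$ have provably equivalent antecedents $\Box_T(x)$ and $\Box_U(x)$ under their common $\Pi_{n+1}$-quantifier, whence $\con_n(T) \leftrightarrow \con_n(U)$.

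Item (3) is the heart of the lemma and I would prove it via the fixpoint equation defining the progression formula $\tau_n$ (base parameter suppressed),
\[
\tau_n(\al,x)\leftrightarrow\Big((\epsilon(x)\vee\sigma(x))\ \vee\ \exists\be\,\big(\prec(\be,\al)\wedge x=\Gn{\con_n(\tau_n(\dot{\beta},y))}\big)\Big).
\]
Working in $\EA^+$ and using that $\prec$ provably linearly orders $D$, I would first derive the axiom-set inclusion $\forall x\,(\tau_n(\beta,x)\to\tau_n(\gamma,x))$ from the hypothesis $\beta\leq\gamma$: an $x$ with $\tau_n(\beta,x)$ is either a base axiom, so $\tau_n(\gamma,x)$ holds by the first disjunct, or of the form $\Gn{\con_n(\tau_n(\dot{\delta},y))}$ with $\delta\prec\beta$, in which case $\delta\prec\gamma$ by $\beta\leq\gamma$ and transitivity, again giving $\tau_n(\gamma,x)$. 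From this I would conclude the provability inclusion $\forall x\,(\Box_{(T)^\beta_n}(x)\to\Box_{(T)^\gamma_n}(x))$, i.e.\ $(T)^\beta_n\subseteq(T)^\gamma_n$. The main obstacle lies precisely here: one must push the informal ``the same derivation still works'' argument through the formalized proof predicate, verifying inside $\EA^+$ that whenever every non-logical axiom of a derivation satisfies the bounded predicate $\tau_n(\beta,-)$ it also satisfies $\tau_n(\gamma,-)$, and that the attendant induction on the length of proofs needed for monotonicity of $\Box$ in its axiom set is available at this low level of arithmetic.

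Finally, item (4) requires no new ideas: applying item (3) to the progression based on $m$-consistency turns the hypothesis $\delta\leq\al$ into $(T)^\delta_m\subseteq(T)^\al_m$, and then item (1), now with the $n$-consistency operator, yields $\con_n((T)^\al_m)\to\con_n((T)^\delta_m)$, as required.
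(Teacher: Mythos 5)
Your proposal is correct, and Items \ref{basic1}, \ref{p2} and \ref{basic2} are handled essentially exactly as in the paper: Item \ref{basic1} by unfolding the definitions and chasing a $\Pi_{n+1}$-sentence through the inclusion, Item \ref{p2} as the evident variant, and Item \ref{basic2} by combining Item \ref{basic1} with the inclusion $(T)_m^\delta \subseteq (T)_m^\al$ supplied by Item \ref{noLabel}. Where you genuinely diverge is Item \ref{noLabel}. The paper derives it in one line from Lemma \ref{tools} (Beklemishev's compression fact): if $\Box_{(T)_n^\beta}(x)$ with $\beta \succ 0$, then $\Box_T\big(\con_n((T)_n^{\gamma_0}) \dot{\rightarrow} x\big)$ for some $\gamma_0 \prec \beta$; since $\gamma_0 \prec \gamma$, the theory $(T)_n^\gamma$ has $\con_n((T)_n^{\gamma_0})$ among its axioms and extends $T$, whence $\Box_{(T)_n^\gamma}(x)$. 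You instead argue at the level of axiom sets, using the fixpoint equation for $\tau_n$ to get $\forall x\,(\tau_n(\beta,x) \to \tau_n(\gamma,x))$ and then passing to provability via monotonicity of the proof predicate in its axiom set. Both routes are sound; yours is more self-contained (it does not presuppose Lemma \ref{tools}, whose proof is of the same elementary character), while the paper's reuses a lemma it needs elsewhere anyway. The ``main obstacle'' you flag is less of an obstacle than you fear: with the standard formalization, a derivation all of whose non-logical axioms satisfy $\tau_n(\beta,\cdot)$ is \emph{verbatim} a derivation from $\tau_n(\gamma,\cdot)$, so verifying $\Box_{(T)_n^\beta}(x) \to \Box_{(T)_n^\gamma}(x)$ amounts to the bounded check that every axiom occurring in a given proof code transfers; no induction on proof length is required, and this same elementary monotonicity of $\Box$ under extension of the axiom set is tacitly used in the paper's own route (e.g.\ to conclude $T \subseteq (T)_n^\gamma$), so your approach incurs no extra formalization cost.
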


\begin{proof}
All items allow for elementary proofs. For example, reasoning informally in $\EA^+$, for Item \ref{basic1}, assume $T \subseteq U $ and $\con_n (U)$. Now, let $\pi \in \Pi_{n+1}$ such that $\Box_T \pi$. By assumption, $\Box_U \pi$ and by $\con_n (U)$ we get that ${\sf Tr}_{n+1}(\pi)$ as was to be shown. Item \ref{p2} goes analogously. Item \ref{noLabel} follows directly from Lemma \ref{tools} and Item \ref{basic2} follows from Item \ref{basic1} together with the fact that $(T)_m^\delta \subseteq (T)_m^\al$. 
\end{proof}

In a sense, it is a happy coincidence that we can prove these basic facts for Turing progressions. Various other basic facts would intuitively require transfinite induction. Of course, transfinite induction is in general not available in weak base theories. However, Schmerl (\cite{Schmerl:1978:FineStructure}) realized that in the context of provability logics, one can work with something quite similar but substantially weaker: \emph{reflexive induction} or sometimes called \emph{reflexive transfinite induction}.

\begin{proposition}[Reflexive induction]
For any p.r. well-ordering $(D,\prec)$, any theory $T$ containing $\EA^+$ is closed under the following reflexive induction rule:
\[
T \vdash \forall \al \ \Big(\, \Box_T \big(\forall \, \be {<} \dot{\al} \ \varphi(\be)\big) \ \ \rightarrow \ \ \varphi(\al)\Big) \ \ \Longrightarrow \ \ T \vdash \forall \al \, \varphi(\al).
\]
\end{proposition}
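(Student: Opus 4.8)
The plan is to deduce $T \vdash \forall \al\, \varphi(\al)$ from the premise by a single application of L\"ob's theorem. Abbreviate $\psi := \forall \al\, \varphi(\al)$ and $\chi(\al) := \forall \be < \al\ \varphi(\be)$, so that the hypothesis of the rule reads $T \vdash \forall \al\, \big(\Box_T \chi(\dot\al) \to \varphi(\al)\big)$. Recall that L\"ob's theorem guarantees that $T \vdash \Box_T \psi \to \psi$ already yields $T \vdash \psi$. Hence it suffices to establish the single implication $T \vdash \Box_T \psi \to \psi$, and the entire argument reduces to proving this L\"ob hypothesis; no genuine (and in a weak base theory unavailable) transfinite induction along $(D,\prec)$ is needed.

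To prove $T \vdash \Box_T \psi \to \psi$ I would reason informally inside $T$. Assume $\Box_T \psi$, that is $\Box_T(\forall\al\,\varphi(\al))$. The crucial observation is that provability of $\psi$ forces, uniformly in $\al$, the provability of each $\chi(\dot\al)$: since $\psi \to \chi(\al)$ is a logical validity (from $\forall\al\,\varphi(\al)$ one instantiates to get $\varphi(\be)$ for every $\be < \al$) whose derivation depends only elementarily on $\al$, one obtains $\forall\al\,\Box_T\chi(\dot\al)$ from $\Box_T\psi$. Feeding this into the premise $\forall\al\,\big(\Box_T\chi(\dot\al)\to\varphi(\al)\big)$ gives $\forall\al\,\varphi(\al)$, i.e.\ $\psi$. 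This is precisely the implication $\Box_T\psi\to\psi$, so L\"ob's theorem closes the argument.

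The one step demanding genuine care, and which I expect to be the main obstacle, is the formalized passage from $\Box_T\psi$ to $\forall\al\,\Box_T\chi(\dot\al)$. Concretely one must verify
\[
\EA^+ \vdash \forall\al\,\big(\Box_T\psi \to \Box_T\chi(\dot\al)\big).
\]
This follows from the standard Hilbert--Bernays--L\"ob derivability conditions together with provable $\Sigma_1$-completeness: because the code of the valid implication $\psi \to \chi(\dot\al)$ is produced by an elementary function of $\al$ (uniformly closing off a fixed logical derivation), $\EA^+$ proves $\forall\al\,\Box_T(\psi \to \chi(\dot\al))$, and the distribution condition $\Box_T(\psi \to \chi(\dot\al)) \to \big(\Box_T\psi\to\Box_T\chi(\dot\al)\big)$ then yields the displayed statement. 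Since $\EA^+ \subseteq T$, all of this reasoning is internally available in $T$; the only further ingredients are the well-definedness of the numeral $\dot\al$ and of the bounded quantifier $\forall\be<\dot\al$, both guaranteed by our standing assumptions that the basic operations on $(D,\prec)$ are formalizable in $\EA^+$.
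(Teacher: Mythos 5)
Your proof is correct and follows exactly the route the paper intends: the paper itself remarks that reflexive induction ``has a very simple proof and is in a sense a direct consequence of L\"ob's theorem,'' and your argument---reducing the rule to $T \vdash \Box_T \psi \to \psi$ via L\"ob, with the only technical step being the uniform, elementarily verifiable passage from $\Box_T(\forall \al\, \varphi(\al))$ to $\forall \al\, \Box_T(\forall \be \prec \dot\al\ \varphi(\be))$---is precisely that standard argument. You also correctly isolate the one point needing care (the formalized uniform provability of the instantiation step), so nothing is missing.
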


Reflexive induction is our main tool for proving facts about Turing progressions. It may seem very tricky but it has a very simple proof and is in a sense a direct consequence of L\"ob's theorem. Whenever we say that we are going to prove that $T$ proves $\forall \alpha \, \varphi (\alpha)$ by reflexive induction, we implicitly say that we are going to prove that in $T$ we can prove $\forall \al \ \Big(\, \Box_T \big(\forall \, \be {<} \dot{\al} \ \varphi(\be)\big) \, \rightarrow \, \varphi(\al)\Big)$. Further, we call the challange of proving $T\vdash \varphi (0)$ the base case, and for given $\alpha$, refer to the assumption in $T$ that $\Box_T \big(\forall \, \be {<} \dot{\al} \ \varphi(\be)\big)$ as the reflexive induction hypothesis (RIH).

With reflexive induction at hand, we have access to some form of transfinite induction even if we cannot prove well-foundedness of the corresponding orderings within the base theory. Let us, by way of example, proof with detail that stronger theories yield stronger Turing progressions.

\begin{lemma}\label{theorem:TuringProgressionsMonotoneInBaseTheory}
Let $T$ and $U$ be theories extending $\EA^+$, $n < \omega$ and $\al \prec \varepsilon_0$. Provably in $\EA^+$:
\[
T \subseteq U  \to (T)_n^\al \subseteq (U)_n^\al.
\]
\end{lemma}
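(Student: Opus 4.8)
The plan is to argue by reflexive induction on $\al$, which is exactly the device that lets us imitate transfinite induction along $\prec$ without having well-foundedness available in $\EA^+$. Since the statement to be proved is an implication carrying the (generally non-trivial) hypothesis $T\subseteq U$, I would run the reflexive induction inside the theory $S:=\EA^+ + (T\subseteq U)$; proving $S\vdash\forall\al\,(T)^\al_n\subseteq(U)^\al_n$ yields the lemma after discharging the hypothesis by the deduction theorem. Concretely, the reflexive induction rule reduces the goal to showing, provably in $S$, that for every $\al$, if $\Box_S\big(\forall\be\prec\dot{\al}\ (T)^\be_n\subseteq(U)^\be_n\big)$ then $(T)^\al_n\subseteq(U)^\al_n$.

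For the base case $\al=0$ the two progressions are just $T$ and $U$, so the claim is the hypothesis $T\subseteq U$ itself. For $\al\succ 0$ I would fix $x$ with $\Box_{(T)^\al_n}(x)$ and apply Lemma \ref{tools}, which produces some $\gamma\prec\al$ with $\Box_T\big(\con_n((T)^{\dot{\gamma}}_n)\,\dot{\rightarrow}\, x\big)$. This is the crucial move: Lemma \ref{tools} collapses provability in the whole $\al$-th progression to the base theory plus a \emph{single} consistency statement, so that no genuine induction over the progression is needed. Using $T\subseteq U$ (an axiom of $S$) I pass to $\Box_U\big(\con_n((T)^\gamma_n)\rightarrow x\big)$, and since every axiom of $U$ is an axiom of $(U)^\al_n$ this gives $\Box_{(U)^\al_n}\big(\con_n((T)^\gamma_n)\rightarrow x\big)$.

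It then remains to produce $\Box_{(U)^\al_n}\big(\con_n((T)^\gamma_n)\big)$, after which a formalized modus ponens closes the case. Here I would invoke the reflexive induction hypothesis at $\gamma\prec\al$, namely $(T)^\gamma_n\subseteq(U)^\gamma_n$, together with Item \ref{basic1} of Lemma \ref{theorem:basicFactsOfTuringProgressions}, which turns this inclusion into $\con_n((U)^\gamma_n)\rightarrow\con_n((T)^\gamma_n)$; since $\con_n((U)^\gamma_n)$ is literally an axiom of $(U)^\al_n$ (as $\gamma\prec\al$), this delivers the missing $\con_n((T)^\gamma_n)$ inside $(U)^\al_n$.

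The main obstacle I anticipate is the bookkeeping of the theories occurring under the provability boxes. The reflexive induction hands us $(T)^\gamma_n\subseteq(U)^\gamma_n$ under $\Box_S$, whereas the final conclusion $\Box_{(U)^\al_n}(x)$ must be witnessed by a proof in $(U)^\al_n$ itself; for the step of the previous paragraph to be legitimate, the inclusion at $\gamma$ (equivalently, the hypothesis $T\subseteq U$) must be in force inside $(U)^\al_n$ and not merely inside $S$. This match is automatic whenever the inclusion is provable in $\EA^+$ --- which is the situation in all intended applications, where $T\subseteq U$ arises as the interpretation of an already-derived sequent --- but in the fully internal reading it has to be tracked explicitly. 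Getting this box-management right, rather than any ordinal arithmetic, is the delicate part; everything else is routine propositional reasoning about $\Box$ combined with Lemmas \ref{tools} and \ref{theorem:basicFactsOfTuringProgressions}.
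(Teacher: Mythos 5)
Your proposal is correct and follows essentially the same route as the paper's own proof: reflexive induction on $\al$, Lemma \ref{tools} to collapse provability in $(T)^\al_n$ to $T$ plus a single consistency statement, monotonicity via $T \subseteq U$, and then the reflexive induction hypothesis combined with Item \ref{basic1} of Lemma \ref{theorem:basicFactsOfTuringProgressions} to convert $\con_n\big((U)^\delta_n\big)$ into $\con_n\big((T)^\delta_n\big)$ under $\Box_{(U)^\al_n}$. The box-management caveat you flag at the end is a genuine subtlety, but it is the very same one implicit in (and glossed over by) the paper's proof, which states the RIH as $\Box_{\EA^+}\big((T)^\delta_n \subseteq (U)^\delta_n\big)$ with the hypothesis $T \subseteq U$ left outside the box.
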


\begin{proof}
We reason informally in $\EA^+$, assume $T\subseteq U$ and use reflexive transfinite induction on $\al$ with the base case being trivial. Thus, for $\alpha \succ 0$, consider an arbitrary formula $\chi$ such that $\Box_{(T)_n^\al} \chi$. Then, there is some $\delta \prec \alpha$ such that 
\[
\Box_{T} \big( \con_n \big(\, (T)_n^\delta \, \big) \to \chi \big).
\]
By the assumption that $T\subseteq U$ together with provable monotonicity of the provability operator, we get that $\Box_{(U)_n^\al} \big( \con_n ((T)_n^\delta) \to \chi \big)$. Moreover, we have that $\Box_{(U)_n^\al} \con_n ((U)_n^\delta)$. Now, by the RIH we know that $\Box_{\EA^+} \big(\,  (T)^\delta_n \subseteq (U)^\delta_n\,\big)$ so that along with Item \ref{basic1} of Lemma \ref{BasicArith1} under the $\Box_{\EA^+}$ we obtain $\Box_{(U)_n^\al} \con_n ((T)_n^\delta)$ and consequently $\Box_{(U)_n^\al} \chi$.
\end{proof}

From now on we shall include less details in our proofs that employ reflexive induction.

\begin{lemma}\label{theorem:smallGoesInside} 
Let $T$ be any theory extending $\EA^+$ and $\sigma \in \Sigma_{n+1}^0$. The following principles are provable in $\EA^+$: 
\begin{enumerate}
\item \label{t1}
$\con_{n} (\, T \,) \land \sigma \to \con_{n} (\, T + \sigma\, )$;

\item \label{t2}
$(T + \sigma)_{n}^\al \equiv (T)_{n}^\al + \sigma$;

\item \label{t3}
$T + \con_n \big(\,(T)_n^\al\, \big) \equiv (T)_n^{\al + 1}$;

\item \label{t4}
$\con_{n+1} (\, T \,) \ \to \ \con_n (\, T \,)$;

\item \label{t5}
$(T)_n^\al \subseteq (T)_{n+m}^\al$.
\end{enumerate}
\end{lemma}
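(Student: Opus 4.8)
The plan is to prove the five items in an order respecting their dependencies: first the two ``local'' facts (Items \ref{t1} and \ref{t4}) that require no induction, then the direct Item \ref{t3}, and finally Items \ref{t2} and \ref{t5} by reflexive induction.

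For Item \ref{t4} I would argue entirely at the level of the truth definitions: any $\Pi_{n+1}$ sentence is, provably in $\EA^+$, a $\Pi_{n+2}$ sentence on which ${\sf Tr}_{n+2}$ and ${\sf Tr}_{n+1}$ agree, so the $\Pi_{n+2}$-reflection $\con_{n+1}(T)$ immediately yields the $\Pi_{n+1}$-reflection $\con_n(T)$. For Item \ref{t1}, the key observation is that for $\sigma \in \Sigma_{n+1}$ and $\pi \in \Pi_{n+1}$ the implication $\sigma \to \pi$ is provably equivalent to a $\Pi_{n+1}$ sentence, since $\neg\sigma \in \Pi_{n+1}$ and $\Pi_{n+1}$ is closed under finite disjunction. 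Then, assuming $\con_n(T)$ and $\sigma$, from $\Box_{T+\sigma}(\pi)$ the formalized deduction theorem gives $\Box_T(\sigma \to \pi)$; applying $\con_n(T)$ to this $\Pi_{n+1}$ sentence yields ${\sf Tr}_{n+1}(\sigma \to \pi)$, and since ${\sf Tr}_{n+1}$ commutes with the connectives and $\sigma$ holds, we obtain ${\sf Tr}_{n+1}(\pi)$, which is exactly $\con_n(T+\sigma)$. Both arguments are elementary and formalizable in $\EA^+$.

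Item \ref{t3} follows directly from the definition of the progression: by definition $(T)_n^{\al+1}$ is axiomatized by $T$ together with $\con_n\big((T)_n^\delta\big)$ for all $\delta \preceq \al$, so $(T)_n^{\al+1}$ trivially proves $T$ and $\con_n\big((T)_n^\al\big)$, giving one inclusion; for the other, $T + \con_n\big((T)_n^\al\big)$ recovers every axiom $\con_n\big((T)_n^\delta\big)$ with $\delta \preceq \al$ by Item \ref{basic2} of Lemma \ref{theorem:basicFactsOfTuringProgressions}.

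The main work is Item \ref{t2}, which I would prove by reflexive induction on $\al$, the base case being immediate. For the inductive step I would show the two theories prove each other's axioms. Working inside $(T+\sigma)_n^\al$: it proves $\sigma$, it proves $T$, and for each $\delta \prec \al$ it proves $\con_n\big((T+\sigma)_n^\delta\big)$; by the RIH $(T+\sigma)_n^\delta \equiv (T)_n^\delta + \sigma$, so by Item \ref{p2} of Lemma \ref{theorem:basicFactsOfTuringProgressions} this equals $\con_n\big((T)_n^\delta + \sigma\big)$, and monotonicity (Item \ref{basic1}) yields $\con_n\big((T)_n^\delta\big)$, exactly the remaining axioms of $(T)_n^\al + \sigma$. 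Conversely, inside $(T)_n^\al + \sigma$ one has $\sigma$ and each $\con_n\big((T)_n^\delta\big)$, so Item \ref{t1} gives $\con_n\big((T)_n^\delta + \sigma\big)$, which by the RIH and Item \ref{p2} equals $\con_n\big((T+\sigma)_n^\delta\big)$, recovering the axioms of $(T+\sigma)_n^\al$. Finally, Item \ref{t5} I would also prove by reflexive induction on $\al$: given $\Box_{(T)_n^\al}\chi$, Lemma \ref{tools} supplies $\delta \prec \al$ with $\Box_T\big(\con_n((T)_n^\delta) \to \chi\big)$; the stronger progression proves $\con_{n+m}\big((T)_{n+m}^\delta\big)$, which descends to $\con_n\big((T)_{n+m}^\delta\big)$ by applying Item \ref{t4} finitely ($m$) many times, and the RIH together with monotonicity (Item \ref{basic1}) converts this into $\con_n\big((T)_n^\delta\big)$, whence $\Box_{(T)_{n+m}^\al}\chi$. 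The main obstacle is the bookkeeping in Item \ref{t2}: one must be careful that the equivalences delivered by the RIH are applied under the provability box and that Item \ref{t1} is invoked only where $\sigma$ is available as an axiom, so that the passage between $\con_n\big((T)_n^\delta + \sigma\big)$ and $\con_n\big((T)_n^\delta\big) \wedge \sigma$ is legitimate.
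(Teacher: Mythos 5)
Your proof is correct and follows the paper's decomposition almost exactly: Items \ref{t1}, \ref{t3} and \ref{t4} by direct elementary arguments, Item \ref{t2} as the main reflexive induction, and Item \ref{t5} by a reflexive induction that reduces to Item \ref{t4}; the ingredients of your inductive steps (Item \ref{t1}, the RIH applied under a box, Items \ref{basic1} and \ref{p2} of Lemma \ref{theorem:basicFactsOfTuringProgressions}) are also the paper's. The one genuine difference is in the inductive step of Item \ref{t2}: the paper argues theorem-by-theorem, using Lemma \ref{tools} to reduce an arbitrary theorem $\chi$ of $(T+\sigma)_n^\al$ to the base theory plus a \emph{single} consistency statement $\con_n\big((T+\sigma)_n^\delta\big)$, which is then converted via Item \ref{t1} and the RIH; you instead argue axiom-by-axiom, showing that each theory proves all axioms of the other. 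Your route works, but it silently relies on the fact that, provably in $\EA^+$, ``every axiom of $U$ is provable in $V$'' yields $U \subseteq V$; in a base theory lacking $\Sigma_1$-collection this transformation of proofs is only available when the map from axioms to $V$-proofs is elementary and uniform. In your argument it is uniform --- the RIH supplies one $\EA^+$-proof covering all $\delta \prec \al$ simultaneously --- so the step is legitimate, but this is precisely the bookkeeping that Lemma \ref{tools} packages away for the paper's theorem-wise route, and it deserves to be said explicitly. A last minor divergence: for the inclusion $(T)_n^\al + \sigma \subseteq (T+\sigma)_n^\al$ the paper needs no fresh induction, since it can quote the already-established monotonicity in the base theory (Lemma \ref{theorem:TuringProgressionsMonotoneInBaseTheory}), whereas you fold that direction into the same reflexive induction; both are fine, yours being slightly more self-contained and the paper's slightly shorter.
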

\begin{proof}
Items \ref{t1}, \ref{t3} and \ref{t4} are easy to check. The right-to-left implication of Item \ref{t2} is straightforward. 

For the left-to-right implication we proceed by transfinite reflexive induction with the base case being trivial. Thus, we reason in $\EA^+$, assume as RIH that
\[
\Box_{\EA^+} \ \forall \be \prec \dot{\al} \ (\, (T + \sigma)_{n}^\be \subseteq (T)_{n}^\be + \sigma \, ),
\]
and let $\Box_{(T + \sigma)_{n}^\al} \chi $ for some arbitrary $\chi$. Thus, 
\[
\exists \delta \prec\alpha \ \Box_{T + \sigma} (\, \con_{n} ( ((T + \sigma)_{n}^\delta) ) \to \chi \, ),
\] 
and by monotonicity
\[
(*) \ \ \ \exists \delta \prec\alpha \ \Box_{(T)_{n}^\al + \sigma} (\, \con_{n} ( ((T + \sigma)_{n}^\delta) ) \to \chi \, ). \label{*t2}
\] 
Notice that $\Box_{(T)_{n}^\al + \sigma} (\, \con_{n} ( (T)_{n}^\delta ) \land \sigma \,)$ so with the help of Item \ref{t1} we obtain that $\Box_{(T)_{n}^\al + \sigma} \con_{n} ( (T)_{n}^\delta + \sigma)$. 

By the RIH together with monotonicity, we get that 
\[
\Box_{(T)_{n}^\al + \sigma} (\, (T + \sigma)_{n}^\delta \subseteq (T)_{n}^\delta + \sigma \, ).
\]
Combining this with Item \ref{basic1} of Lemma \ref{BasicArith1} (under a box) we conclude that  $\Box_{(T)_{n}^\al + \sigma} \con_{n} ( (T + \sigma)_{n}^\delta )$. With the help of \hyperref[*t2]{(*)}  we finally get that $\Box_{(T)_{n}^\al + \sigma} \ \chi$.

Item \ref{t5} tells us that Turing progressions are monotone in the consistency notion and follows from Item \ref{t4} from a straightforward reflexive transfinite induction.
\end{proof}

Our calculus will prove all the provable relations between the Turing progressions that are considered in this paper. A first step and corner stone in this study is Proposition \ref{Schmerl} below as was proven by Schmerl in \cite{Schmerl:1978:FineStructure} for the base theory of primitive recursive arithmetic. Beklemishev generalized this to the setting of $\EA^+$ with variable base theories. We cite Beklemishev's formulation here (Theorem 3 and Theorem 4 of \cite{Beklemishev:2003:AnalysisIteratedReflection}).

\begin{proposition}[Schmerl] \label{Schmerl}
Let $T$ be an axiomatizable theory which is a $\Pi_{n+1}$-axiomatized extension of $\EA$, then provably in $\EA^+$:
\begin{enumerate}[S1]
\item \label{S1}
$\ \ \ \forall \al \succ 0 \ \Big( \ (T)_{n+m}^\al \ \equiv_{n} \ (T)_n^{e^m (\al)} \ \Big)$; 

\item \label{S2}
$\ \ \ \forall \be \succ 0 \ \Big( \big( (T)_{n+m}^\be \big)_n^\al \ \equiv_{n} \ (T)_n^{e^m (\be) \cdot (1+\al)} \ \Big)$. 
\end{enumerate}
\end{proposition}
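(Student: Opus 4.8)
The statement is Schmerl's fine-structure theorem, so rather than reprove it from scratch I would follow the reflexive-induction reduction method of Schmerl and Beklemishev, using the tools assembled above. The plan is to treat S1 first and then bootstrap S2, noting that S1 is exactly the instance $\al=0$ of S2 (after renaming $\be$ to $\al$), so S1 must in any case be available as the base of the induction that proves S2.

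For S1 I would first reduce to the case $m=1$ by ordinary induction on $m$. The case $m=0$ is trivial since $e^0$ is the identity. For the step, the induction hypothesis applied at base level $n+1$ gives $(T)_{(n+1)+m}^\al \equiv_{n+1} (T)_{n+1}^{e^m(\al)}$, which in particular yields the coarser relation $\equiv_n$ (since every $\Pi_{n+1}$-sentence is also $\Pi_{n+2}$); composing this through the $m=1$ case $(T)_{n+1}^{e^m(\al)} \equiv_n (T)_n^{e(e^m(\al))}$ and using $e\circ e^m = e^{m+1}$ together with transitivity of $\equiv_n$ closes the induction. Here $T$ being $\Pi_{n+1}$-axiomatized is reused as a $\Pi_{n+2}$-axiomatization so that the hypothesis on $T$ is met at the higher level.

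The heart is then the $m=1$ case $(T)_{n+1}^\al \equiv_n (T)_n^{e(\al)}$, which I would prove by reflexive transfinite induction on $\al$. The engine is the reduction of a single step of $(n{+}1)$-consistency into an $e$-governed block (an $\omega^{(\cdot)}$-sized block) of $n$-consistency iterations: one shows, provably in $\EA^+$, that over a $\Pi_{n+1}$-axiomatized base the $\Pi_{n+1}$-consequences of $(n{+}1)$-consistency coincide with those of the $\omega$-fold iterate of $n$-consistency. To turn a derivation in the progression into one from a single lower consistency statement I would invoke the provability-from-below Lemma~\ref{tools}, feeding the reflexive induction hypothesis at the exposed lower level $\gamma \prec \al$, and then use items~\ref{basic1} and~\ref{p2} of Lemma~\ref{BasicArith1} together with items~\ref{t3}--\ref{t5} of Lemma~\ref{theorem:smallGoesInside} for the bookkeeping of consistency notions and exponents; monotonicity in the base theory, Lemma~\ref{theorem:TuringProgressionsMonotoneInBaseTheory}, handles the comparisons between the two progressions.

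Finally, for S2 I would run a reflexive induction on $\al$ with base case $\al=0$ supplied by S1. The delicate point, and the main obstacle, is the successor step: passing from $\al$ to $\al+1$ adds a single outer $n$-consistency over the $(n{+}m)$-rich base $(T)_{n+m}^\be$, and the claim is that this unfolds $\Pi_{n+1}$-conservatively into an entire block of $e^m(\be)$-many $n$-consistency iterations, which is exactly what produces the multiplicative factor $e^m(\be)\cdot(1+\al)$ rather than the naive additive $e^m(\be)+\al$. The reason one step does so much work is that $(T)_{n+m}^\be$ is vastly stronger than $(T)_n^{e^m(\be)}$ in $\Pi_{n+2}$-content even though the two agree on $\Pi_{n+1}$-content by S1, and the reduction exploits precisely this extra strength. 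I expect this conservative-unfolding step to be the genuinely hard part: it is Schmerl's fine-structure fact, resting on a partial cut-elimination argument that is formalizable in $\EA^+$ thanks to the provably total superexponential and the assumption that the base is $\Pi_{n+1}$-axiomatized. Since this is established as Theorems~3 and~4 of Beklemishev's analysis, I would ultimately cite it there for the technical reduction and limit myself to assembling the inductions above.
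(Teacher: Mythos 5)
Your proposal is correct and ultimately takes the same route as the paper: the paper gives no proof of this proposition at all, but simply cites it (Schmerl's theorem as generalized in Theorem 3 and Theorem 4 of Beklemishev's analysis of iterated reflection), which is precisely where you defer the technical core. Your surrounding scaffolding --- reducing S1 to the $m=1$ case, noting S1 is the $\al=0$ instance of S2, and running reflexive transfinite induction with the cut-elimination-based conservativity step as the hard kernel --- is a faithful outline of how the cited result is actually established, so there is no conflict with the paper's treatment.
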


\subsection{Soundness}

In this subsection we shall show that \TPr is sound for the FTP interpretation. We first present the main simple argument and will then fill out the missing details in the remainder of this subsection.

\begin{theorem}[Soundness] \label{soundness}
Given $\varphi,\psi \in \FLE$, if $\varphi \vdash \psi$ then $$\EA^+ \vdash \forall x \ (\Box_{\Th_ \psi} (x) \rightarrow \Box_{\Th_\varphi} (x) \,).$$ 
\end{theorem}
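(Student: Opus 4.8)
The plan is to argue by induction on the length of the $\TPr$-derivation of $\varphi \vdash \psi$. Unwinding the FTP interpretation and using the notation introduced above, the goal is simply the inclusion $\Th_\psi \subseteq \Th_\varphi$, provable in $\EA^+$. Hence it suffices to verify that each axiom of $\TPr$ is sound under FTP and that each rule preserves soundness. Every one of these verifications is carried out \emph{inside} $\EA^+$, and wherever a statement ranging over a whole progression is required I would invoke reflexive induction rather than genuine transfinite induction, since the latter is unavailable in the base theory.

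For the bookkeeping axioms and rules the work is light and is delegated to the facts already established. Axioms~1 and~2 are immediate from the semantics of conjunction as union of theories (so each conjunct's theorems are provably theorems of the conjunction) together with $\top^* = \epsilon$, which every $\Th_\varphi$ provably extends. The monotonicity Axiom~\ref{mon} is exactly Item~\ref{noLabel} of Lemma~\ref{theorem:basicFactsOfTuringProgressions}. Co-additivity (Axiom~\ref{coadditive}) amounts to the progression identity $(\Th_\varphi)_n^{\al+\be} \equiv ((\Th_\varphi)_n^\al)_n^\be$, which I would settle by a short reflexive induction on $\be$. Axiom~\ref{omega} asks for $(\Th_\varphi)_m^{e^n(\al)} \subseteq (\Th_\varphi)_{m+n}^\al$; Schmerl's clause S\ref{S1} yields only the conservation $\equiv_m$, but since every axiom of the left-hand progression is a $\con_m$-statement and hence $\Pi_{m+1}$, the conservation upgrades to the honest inclusion by formalized deduction. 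Rules~1 and~2 are the formalized closure of $\subseteq$ under union and under transitivity, and Rule~\ref{r:3} is precisely Lemma~\ref{theorem:TuringProgressionsMonotoneInBaseTheory}.

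Rule~\ref{r:4} is slightly more delicate but still routine. Using the induction hypothesis $\Th_\psi \subseteq \Th_\varphi$, I would rewrite the inner theory $\Th_\varphi \cup (\Th_\psi)_m^{\be+1}$ as $\Th_\varphi + \sigma$ with $\sigma := \con_m((\Th_\psi)_m^{\be})$, which is legitimate by Items~\ref{t3} and~\ref{basic2}. Because $m<n$ we have $\sigma \in \Pi_{m+1} \subseteq \Sigma_{n+1}$, so Item~\ref{t2} of Lemma~\ref{theorem:smallGoesInside} lets me push $\sigma$ out of the $n$-progression, giving $(\Th_\varphi + \sigma)_n^\al \equiv (\Th_\varphi)_n^\al + \sigma$; since $\sigma$ is provable in $(\Th_\psi)_m^{\be+1}$, the inclusion of the right-hand theory into the left-hand theory follows.

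The main obstacle is the Schmerl axiom scheme (Axiom~\ref{MS1}), which carries the genuine arithmetical content. Its left-hand formula interprets as the $n$-progression of length $\al$ built over the union $\bigcup_i (\EA^+)_{n_i}^{\al_i}$, and the equivalence to be established is the modal shadow of Schmerl's clause S\ref{S2}, namely $((T)_{n+m}^\be)_n^\al \equiv_n (T)_n^{e^m(\be)\cdot(1+\al)}$. I would first use the \textsf{MNF} condition (Item~\ref{item:TechnicalMNFclause} of Definition~\ref{definition:MNFs}) to guarantee that the exponents line up so that the dominant term $e^{n_0-n}(\al_0)\cdot(1+\al)$ absorbs the contributions of the remaining conjuncts, then apply S\ref{S2}, and finally, exactly as for Axiom~\ref{omega}, upgrade the conservation $\equiv_n$ to provable inclusions on both sides by noting that the newly adjoined axioms are $\Pi_{n+1}$. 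Threading the several conjuncts through S\ref{S2} while remaining within $\EA^+$-provability (via reflexive induction) is the delicate point, and it is this verification, together with the supporting Schmerl-type lemmas, that I expect to constitute the ``missing details'' to be filled in over the rest of the subsection.
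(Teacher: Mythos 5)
Your global architecture---induction on the length of the derivation, the bookkeeping axioms, Rule \ref{r:3} via Lemma \ref{theorem:TuringProgressionsMonotoneInBaseTheory}, and Rule \ref{r:4} via the deduction theorem and Lemma \ref{theorem:smallGoesInside}---agrees with the paper's proof. The gap is in your treatment of Axiom \ref{omega}, and it propagates to your plan for Axiom \ref{MS1}: you invoke Proposition \ref{Schmerl} for base theories to which it does not apply. Both clauses of that proposition are stated only for base theories $T$ that are $\Pi_{n+1}$-axiomatized extensions of $\EA$, where $n$ is the level of the asserted conservation. For Axiom \ref{omega} your base theory is $\Th_\varphi$ for an \emph{arbitrary} $\varphi\in\FLE$; as soon as $\varphi$ contains a modality with base $k>m$, the theory $\Th_\varphi$ has axioms of complexity $\Pi_{k+1}$, and the hypothesis of clause \ref{S1} fails. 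This is not a removable technicality: the conservation $(\Th_\varphi)_{m+n}^{\al}\equiv_m(\Th_\varphi)_m^{e^n(\al)}$ that you assert is false in general. Take $\varphi:=\la 2^1\ra\top$, $m=0$, $n=1$, $\al=1$, so $\Th_\varphi=(\EA^+)_2^1$. Applying Proposition \ref{Schmerl} to the legitimate base $\EA^+$, one computes $((\EA^+)_2^1)_1^1\equiv_0(\EA^+)_0^{\omega^{\omega\cdot2}}$, while $((\EA^+)_2^1)_0^{\omega}\equiv_0(\EA^+)_0^{\omega^{\omega+1}}$; since $\omega^{\omega+1}<\omega^{\omega\cdot2}$, the $\Pi_1$ sentence $\con_0\big((\EA^+)_0^{\omega^{\omega+1}}\big)$ is provable in the first theory but, by G\"odel's second incompleteness theorem, not in the second, so the two sides are not $\equiv_0$. (The counterexample in the paper's Miscellanea section makes the same point on the modal side.) The inclusion you actually need, $(\Th_\varphi)_m^{e^n(\al)}\subseteq(\Th_\varphi)_{m+n}^{\al}$, is true for an arbitrary extension of $\EA^+$, but precisely because it cannot be extracted from Schmerl's theorem, the paper proves it from scratch: a reflexive transfinite induction whose successor step needs formalized $\omega$-induction under the box (Proposition \ref{BoxIndm}) together with Lemma \ref{theorem:smallGoesInside}. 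That argument is the real content of this axiom's soundness and is absent from your sketch.

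The same defect affects Axiom \ref{MS1}. There the base theory is the union $\bigcup_i(\EA^+)_{n_i}^{\al_i}$, again not $\Pi_{n+1}$-axiomatized, so clause \ref{S2} cannot be applied to it; moreover, your ``upgrade by deduction'' cannot work ``exactly as for Axiom \ref{omega}'', because the two sides are now progressions over \emph{different} bases, and turning conservation into mutual inclusion requires comparing $\con_n$-statements of progressions over distinct base theories. Supplying that comparison is exactly the role of the paper's supporting lemmas: Lemma \ref{equivUnderSmall} (progressions over provably $\equiv_{n+m}$-equivalent bases have equivalent $n$-consistency statements), Proposition \ref{SoundSch1} (proved through the nested ${\sf INF}$ representation, so that Item \ref{S2} is only ever applied with base $\EA^+$), and Lemma \ref{SoundSch2}, from which the soundness of Axiom \ref{MS1} then follows formally. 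What you defer as ``delicate threading'' is therefore not a routine verification but the construction of intermediate lemmas that keep every appeal to Schmerl's theorem within its hypotheses; as proposed, your route passes through a false intermediate claim.
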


\begin{proof}
By induction on the length of a $\TPr$ proof of $\varphi \vdash \psi$. Thus, we see that all axioms are arithmetically sound and that all rules preserve truth.

The first three axioms and first two rules are easily seen to be arithmetically sound. In particular, Axiom \ref{mon} --$\la n^\alpha \ra \varphi \vdash \la n^\beta \ra \varphi$ for $\alpha \geq \beta$-- expresses the property that smooth Turing progressions are monotone in the ordinal as expressed by Item \ref{noLabel} of Lemma \ref{theorem:basicFactsOfTuringProgressions}. On the other hand, Rule \ref{r:3} --if $\varphi\vdash {\psi}$, then $\la n^\al \ra \varphi\vdash \la n^\al \ra {\psi}$-- expresses the monotonicity of smooth Turing progressions in the base theory as reflected in Lemma \ref{theorem:TuringProgressionsMonotoneInBaseTheory}. For a proof of the soundness of the co-additivity axiom, Axiom \ref{coadditive}, we refer to Lemma 2.6 of \cite{Beklemishev:1995:LocalRefVSCon}. The remaining rule and the remaining two axioms are separately proven to be sound in the remainder of this subsection.
\end{proof}

We will start by looking at the remaining rule. Note that in proving the soundness we may use the arithmetical counterpart of the modal axioms and rules that we already have proven to be sound. Instead of talking every time about the arithmetical counterpart of such rules and axioms, we will for the sake of presentation simply speak about the modal rules and axioms. As such, our proofs below will seem an amalgamate of modal and arithmetical reasoning.

\begin{lemma}
Rule \ref{r:4} is sound w.r.t. the FTP interpretation. 
\end{lemma}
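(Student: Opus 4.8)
The plan is to decode both sides of the conclusion sequent into Turing progressions and show they are in fact provably equivalent theories, from which the required entailment follows immediately. Writing $T := \Th_\varphi$ and $U := \Th_\psi$, the induction hypothesis (soundness of the premise $\varphi \vdash \psi$) gives $\EA^+ \vdash \forall x\,(\Box_U(x) \to \Box_T(x))$, i.e. $U \subseteq T$. Unfolding the arithmetical interpretation, the left-hand side $\la n^\al \ra \varphi \land \la m^{\be+1}\ra\psi$ denotes $(T)_n^\al + (U)_m^{\be+1}$, whereas the right-hand side $\la n^\al \ra(\varphi \land \la m^{\be+1}\ra\psi)$ denotes $(T + (U)_m^{\be+1})_n^\al$. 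Since the FTP reading of the sequent asks for $\EA^+ \vdash \forall x\,(\Box_{(T+(U)_m^{\be+1})_n^\al}(x) \to \Box_{(T)_n^\al + (U)_m^{\be+1}}(x))$, the substantive task is to recover the outer progression over the combined base inside the theory that merely puts $(U)_m^{\be+1}$ on top of $(T)_n^\al$.

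The engine of the argument is the collapse of the tail of $(U)_m^{\be+1}$ into a single sentence of controlled complexity. By Item \ref{t3} of Lemma \ref{theorem:smallGoesInside} we have $(U)_m^{\be+1} \equiv U + \sigma$ with $\sigma := \con_m((U)_m^{\be})$. The essential use of the side condition $m < n$ is that $\sigma \in \Pi_{m+1} \subseteq \Sigma_{n+1}$, so that $\sigma$ is exactly the kind of sentence to which Item \ref{t2} of Lemma \ref{theorem:smallGoesInside} applies.

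With this in hand I would compute both theories modulo provable equivalence. For the right-hand side, using $U \subseteq T$ I get $T + (U)_m^{\be+1} \equiv T + U + \sigma \equiv T + \sigma$; invariance of progressions under $\EA^+$-provable equivalence of the base then yields $(T+(U)_m^{\be+1})_n^\al \equiv (T+\sigma)_n^\al$, and Item \ref{t2} rewrites this as $(T)_n^\al + \sigma$. For the left-hand side, $(T)_n^\al + (U)_m^{\be+1} \equiv (T)_n^\al + U + \sigma$, and since $U \subseteq T \subseteq (T)_n^\al$ (the latter by monotonicity in the ordinal, Item \ref{noLabel} of Lemma \ref{theorem:basicFactsOfTuringProgressions}), this simplifies to $(T)_n^\al + \sigma$ as well. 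Both theories therefore coincide with $(T)_n^\al + \sigma$ modulo provable equivalence, giving the two-sided, hence in particular the required one-sided, entailment.

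The main obstacle, and the precise point at which $m < n$ is indispensable, is the complexity bookkeeping in the middle step: one must ensure that $\con_m((U)_m^\be)$ genuinely lands in $\Sigma_{n+1}$ so that Item \ref{t2} is applicable, and that every equivalence above is obtained provably in $\EA^+$ rather than merely in the standard model. The latter is where the earlier machinery pays off, since Item \ref{t2} is itself established by reflexive induction; I would simply invoke it, together with the uniqueness of progressions modulo $\EA^+$-provable equivalence of the base, to keep the entire argument formalizable in $\EA^+$.
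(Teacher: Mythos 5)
Your argument is correct, and it rests on the same arithmetical ingredients as the paper's proof: Item \ref{t3} of Lemma \ref{theorem:smallGoesInside} to collapse $(U)_m^{\be+1}$ into $U+\sigma$ with $\sigma := \con_m\big((U)_m^\be\big)$, the complexity bookkeeping $m<n \Rightarrow \sigma \in \Sigma_{n+1}$, Item \ref{t2} to commute $\sigma$ with the $n$-progression, and monotonicity of progressions in the base theory. Where you genuinely differ is in the architecture. The paper runs a one-directional inclusion chain: it unfolds the right-hand theory into $\big(\T{\varphi}+\T{\psi}\big)_n^\al + \sigma$ and then finishes with the \emph{formalized deduction theorem} together with $\big(\T{\psi}+\T{\varphi}\big)_n^\al \subseteq \big(\T{\varphi}\big)_n^\al$, using that the left-hand theory proves $\sigma$ outright; the collapse of $\T{\psi}$ into $\T{\varphi}$ happens only at the level of progressions. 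You instead collapse the base theory first ($T+U\equiv T$, from the induction hypothesis) and compute \emph{both} sides down to the common theory $(T)_n^\al+\sigma$. This buys a provable equivalence of the two theories — strictly more than the rule requires, though the converse inclusion is easy anyway — and avoids the deduction theorem entirely, at the price of invoking invariance of progressions under provably equivalent bases. One attribution in your write-up should be repaired: that invariance is \emph{not} the fixed-point uniqueness result cited from Beklemishev (which concerns different solutions $\tau$ of the defining equation for one and the same base presentation); what you actually need is Lemma \ref{theorem:TuringProgressionsMonotoneInBaseTheory} applied in both directions — the same fact the paper invokes, via soundness of Rule \ref{r:3}, to get its step $(*)$. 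With that citation corrected, every step of your argument is available in the paper and formalizes in $\EA^+$.
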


\begin{proof}
Let us recall Rule \ref{r:4}:
\[
\mbox{If $\varphi \vdash \psi$ then $\la n^\al \ra \varphi \ \land \ \la m^{\be + 1} \ra \psi \, \vdash \, \la n^\al \ra \, \Big( \, \varphi \ \land \ \la m^{\be + 1} \ra \psi \, \Big) \ \ \ \text{ for } m < n$. 
}
\]

To prove the arithmetical soundness, we assume that provably in $\EA^+$ we have $\T{\psi} \subseteq \T{\varphi}$. Consequenty $(\T{\psi} + \T{\varphi}) \subseteq \T{\varphi}$ so that by Rule \ref{r:3} we obtain 
\[
(*) \ \ \ (\T{\psi} + \T{\varphi} )_n^\al \subseteq (\T{\varphi})_n^\al.
\]
Also, we have the following provably in $\EA^+$:
\[
\Big(\, \T{\varphi} \, + \,(\T{\psi})_m^{\be + 1}\, \Big)_n^\al \ \subseteq \ \Big(\, \T{\varphi} \, + \, \T{\psi} \, + \ \con_m \big(\, (\T{\psi})_m^{\be} \, \big) \,\Big)_n^\al \ ,
\]
and since $\con_m\big( \, (\T{\psi})_m^{\be}\, \big) \in \Pi^0_{m+1}$ and $\Pi^0_{m+1} \subset \Sigma^0_{n+1}$ we also have
\[
\Big( \, \T{\varphi} \, + \, \T{\psi} \, + \ \con_m\big(\, (\T{\psi})_m^{\be}\, \big) \, \Big)_n^\al \ \subseteq \ \big(\, \T{\varphi} \, + \, \T{\psi} \, \big)_n^\al \, + \ \con_m\big( \, (\T{\psi})_m^{\be}\, \big).
\]

Consequently, for any formula $\chi$ we have provably in $\EA^+$ that 
\[
\Box_{\Big(\, \T{\varphi} \, + \,(\T{\psi})_m^{\be + 1}\, \Big)_n^\al}\, \chi \ \ \longrightarrow \ \ \Box_{(\T{\varphi} \, + \, \T{\psi} )_n^\al \, + \ \con_m( (\T{\psi})_m^{\be})} \, \chi \ .
\]

Now, by the formalized deduction theorem together with $(*)$ we have that for any formula $\chi$ that if
\[
\Box_{(\T{\varphi} \, + \, \T{\psi} )_n^\al \, + \ \con_m( (\T{\psi})_m^{\be})} \, \chi \ ,
\]
then $\Box_{(\T{\varphi} )_n^\al} \big( \con_m( (\T{\psi})_m^{\be}) \to \chi \big)$ whence by monotonicity
\[
\Box_{( \T{\varphi} )_n^\al \ + \ (\T{\psi})_m^{\be + 1}} \Big(\, \con_m \big(\, (\T{\psi})_m^{\be}\, \big) \to \chi \Big).
\]
In particular we get $\Box_{(\T{\varphi} )_n^\al \ + \ (\T{\psi})_m^{\be + 1}} \, \chi$ as was to be shown.
\end{proof}

We will now consider Axiom \ref{omega}:
$\la (m + n)^\al \ra \, \varphi \, \vdash \, \la m^{e^n (\al)} \ra \, \varphi$. It is easy to see that we can reduce the arithmetical soundness of this axiom to the arithmetical soundness of $\la (n+1)^\al \ra \varphi \vdash \la n^{e(\al)} \ra \varphi$. Thus, we just check the soundness of this principle with the help of the following proposition:

\begin{proposition} \label{BoxIndm}
Let $\varphi(x)$ be any arithmetical formula. By $I_\varphi (\bar{n})$ we denote the formula $\varphi(0) \ \land \ \forall x \ \big(\, \varphi(x) \to \varphi(s(x)) \, \big) \ \to \ \varphi(\bar{n})$. For any extension $T$ of $\EA$, provably in $\EA^+$:
\[
\forall n \ \Box_{T} I_\varphi (\bar{n}) .
\]  
\end{proposition}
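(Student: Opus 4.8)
The plan is to reduce the $\Pi_2$ statement $\forall n\, \Box_T I_\varphi(\overline{n})$ to a purely bounded statement, to which the $\Delta_0$-induction available in $\EA^+$ can then be applied. The key observation is that each $I_\varphi(\overline{n})$, although an instance of induction, has a \emph{fixed, uniform} proof of length linear in $n$: writing $A := \varphi(0)\land\forall x(\varphi(x)\to\varphi(s(x)))$, one instantiates the second conjunct of $A$ successively at $0,\overline{1},\dots,\overline{n-1}$ and chains $n$ modus-ponens steps starting from the first conjunct $\varphi(0)$ to reach $\varphi(\overline{n})$, then discharges $A$. Since $\overline{n+1}$ is by definition the term $s(\overline{n})$, no arithmetical identities are needed to pass from $\varphi(s(\overline{n}))$ to $\varphi(\overline{n+1})$; the construction is pure first-order logic, so it goes through in any theory $T$ extending $\EA$.

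First I would observe that $n \mapsto \Gn{I_\varphi(\overline{n})}$ is an elementary function of $n$ (it is numeral substitution into a fixed template), so $\Box_T I_\varphi(\overline{n})$ is a genuine arithmetical formula in the single variable $n$; this is exactly what distinguishes the provable numeral instances from the (generally unprovable) universally quantified induction axiom $\forall n\, I_\varphi(n)$. Next I would fix an explicit elementary bound $g(n)$ — of the form $2^{p(n)}$ for a fixed polynomial $p$ depending on $|A|$ and $|\varphi|$ — on the G\"odel number of the canonical proof just described; such a term is available already with exponentiation, hence a fortiori in $\EA^+$. Using that $T$ is $\Delta_0$-presented so that $\text{Prf}_T$ is $\Delta_0$, the formula
\[
P(n)\ :=\ \exists y \leq g(n)\ \text{Prf}_T\big(y, \Gn{I_\varphi(\overline{n})}\big)
\]
is bounded.

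The core of the argument is then to prove $\forall n\, P(n)$ by $\Delta_0$-induction inside $\EA^+$. For the base case $P(0)$ I would exhibit the short logical proof of $A \to \varphi(0)$ (the first conjunct of $A$) and check that it fits under $g(0)$. For the inductive step $P(n) \to P(n+1)$ I would perform explicit proof-surgery: from a witnessing proof $y \leq g(n)$ of $A \to \varphi(\overline{n})$, together with the uniformly constructible short proof of the instantiation $A \to (\varphi(\overline{n}) \to \varphi(s(\overline{n})))$ obtained from the conjunct $\forall x(\varphi(x)\to\varphi(s(x)))$ of $A$, I assemble by a fixed propositional manipulation a proof of $A \to \varphi(\overline{n+1})$ and verify that its code stays below $g(n+1)$. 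Finally, since $P(n) \to \Box_T I_\varphi(\overline{n})$ by dropping the bound, the desired $\forall n\, \Box_T I_\varphi(\overline{n})$ follows.

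The main obstacle is the formalization in the inductive step: one must carry out the elementary proof-surgery and the verification of the size bound $g(n+1)$ entirely by bounded ($\Delta_0$) reasoning about coded syntax, and in particular produce the instantiation proof $A \to (\varphi(\overline{n})\to\varphi(s(\overline{n})))$ \emph{uniformly} in $n$ with an elementarily bounded code. These are routine but delicate syntactic verifications; the conceptual work is entirely in choosing the bounded reformulation $P(n)$, after which only $\Delta_0$-induction — and not the unavailable $\Sigma_1$-induction — is needed. I would stress that this is why the explicit bound $g$ cannot be dispensed with: $\Box_T I_\varphi(\overline{n})$ is $\Sigma_1$ and $\EA^+$ does not prove $\Sigma_1$-induction, so a naive induction on the unbounded statement — or an attempt via reflexive induction, which here would require reflecting a $\Sigma_1$ statement out of a box — does not go through.
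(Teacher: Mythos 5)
Your proposal is correct and follows essentially the same route as the paper: the paper also builds the canonical chained modus-ponens proof of $I_\varphi(\overline{n})$ by an explicit elementary recursion (its function $D(\Gn{\varphi},\overline{n})$, which appends a fixed block of instantiation and propositional steps at each stage) and then verifies ${\sf Prf}_T(D(\Gn{\varphi},\overline{n}), \Gn{I_\varphi(\overline{n})})$ for all $n$ by bounded induction, exactly the reduction to $\Delta_0$-reasoning you describe. The only cosmetic difference is that you bound the proof witness by an elementary term $g(n)$ and quantify $\exists y \leq g(n)$, whereas the paper supplies the witness directly as the value of the function $D$; both devices serve the same purpose of avoiding the unavailable $\Sigma_1$-induction.
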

\begin{proof}
Let $S_\varphi := \forall x \ \big(\, \varphi(x) \to \varphi(s(x)) \, \big)$, $S_{\varphi / n} := \varphi(\overline{n}) \to \varphi(\overline{n+1})$ and $P_\varphi := \varphi(0) \ \land \ S_\varphi$. Let $D(x,y)$ denote the recursive function such that given the G\"odel number of $\varphi$ is defined as follows: 
\begin{itemize}
 	\item $D(\Gn{\varphi}, 0) = \langle \, P_\varphi \to \ \varphi(0) \, \rangle$;
	\item \begin{tabbing}
			$D(\Gn{\varphi}, \overline{n+1}) = D(\Gn{\varphi}, \overline{n})^\frown \langle$ \= $S_\varphi \ \to \ S_{\varphi / n} $,\\ [0.25cm]
			\ \> $ (\, S_\varphi \, \to \, S_{\varphi / n} \, ) \ \to \ $ \\
			\ \>	 \hspace{1cm} $\big(\, P_\varphi \, \to \, (\, S_\varphi \, \to \, S_{\varphi / n} \, ) \, \big) $,\\[0.25cm]
			\ \> $P_\varphi \, \to \, (\, S_\varphi \, \to \, S_{\varphi / n} \, ) $, \\ [0.25cm]
			\ \> $P_\varphi \, \to \, S_\varphi$, \\ [0.25cm]
			\ \> $\big(P_\varphi \to ( S_\varphi \to  S_{\varphi / n}  )\big) \to $\\
			\ \> \hspace{1cm} $\big(\, (P_\varphi \to  S_\varphi) \to (P_\varphi \to S_{\varphi / n})\, \big)$, \\[0.25cm]
			\ \> $(P_\varphi \to  S_\varphi) \to (P_\varphi \to S_{\varphi / n})$, \\ [0.25cm]
			\ \> $P_\varphi \to S_{\varphi / n}$, \\[0.25cm]
			\ \> $P_\varphi \to S_{\varphi / n} \to$\\
			\ \> \hspace{1cm} $\big(\, (P_\varphi \to \varphi(\overline{n})) \to (P_\varphi \to \varphi(\overline{n+1})) \, \big)$, \\[0.25cm]
			\ \> $(P_\varphi \to \varphi(\overline{n})) \to (P_\varphi \to \varphi(\overline{n+1}))$, \\[0.25cm]
			\ \> $(P_\varphi \to \varphi(\overline{n+1}))\ \rangle$.
		   \end{tabbing}
	\end{itemize}
With this definition, we can easily check that provably in $\EA^+$.:
$$\forall n \ {\sf Prf}_T(\Gn{D(\varphi,\overline{n})},I_\varphi(\overline{n})) $$
and so,
$$\forall n \ \Box_{T} I_\varphi (\bar{n}) .$$
\end{proof}

\begin{lemma}
For any extension $T$ of $\EA^+$ any ordinal $\al \prec \varepsilon_0$ and natural number $n$, provably in $\EA^+$ we have:
\[
(T)_n^{e(\al)} \ \subseteq \ (T)_{n+1}^\al .
\]
\end{lemma}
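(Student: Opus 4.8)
The plan is to reason informally in $\EA^+$ and establish the inclusion by reflexive transfinite induction on $\al$, after first collapsing the goal to a single reflection-type claim. I would peel off the statement using Lemma \ref{tools}: if $\Box_{(T)_n^{e(\al)}}(x)$ and $e(\al)\succ 0$, then $\Box_T(\con_n((T)_n^\gamma)\to x)$ for some $\gamma\prec e(\al)$, and by provable monotonicity of the provability predicate this yields $\Box_{(T)_{n+1}^\al}(\con_n((T)_n^\gamma)\to x)$. Hence the lemma reduces to the key claim
\[
(\star)\qquad (T)_{n+1}^\al\vdash\con_n((T)_n^\gamma)\ \text{ for every }\gamma\prec e(\al),
\]
uniformly and provably in $\EA^+$; once $(\star)$ holds, combining it with the displayed implication gives $\Box_{(T)_{n+1}^\al}(x)$. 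The base case $\al=0$ is immediate since $e(0)=0$ and both sides are $T$.

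For the limit case I expect a clean argument directly from the reflexive induction hypothesis. If $\al$ is a limit and $\gamma\prec e(\al)=\sup_{\be\prec\al}e(\be)$, then $\gamma\prec e(\be)$ for some $\be\prec\al$, so the RIH already gives $(T)_{n+1}^\be\vdash\con_n((T)_n^\gamma)$. Since $(T)_{n+1}^{\be+1}\subseteq(T)_{n+1}^\al$ proves $\con_{n+1}((T)_{n+1}^\be)$, which gives $\con_n((T)_{n+1}^\be)$ by Item \ref{t4} of Lemma \ref{theorem:smallGoesInside}, and this combines with Items \ref{basic1} and \ref{basic2} of Lemma \ref{theorem:basicFactsOfTuringProgressions} to transfer $(\star)$ up to $\al$. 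No formalized induction is needed in this case.

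The successor case $\al=\eta+1$ is where the real work lies and is the main obstacle. Here $e(\eta+1)=\omega^{\eta+1}$ strictly dominates $e(\eta)$, so the single new axiom $\con_{n+1}((T)_{n+1}^\eta)$ must certify $n$-consistency of $(T)_n^\gamma$ all the way up to $\gamma\prec\omega^{\eta+1}$, an exponential jump beyond what the RIH alone provides. The plan is to bridge this with the formalized induction of Proposition \ref{BoxIndm}. Writing $U:=(T)_{n+1}^\eta$, the RIH gives $(T)_n^{e(\eta)}\subseteq U$, whence $(T)_{n+1}^{\eta+1}\vdash\con_{n+1}((T)_n^{e(\eta)})$ by Item \ref{basic1}. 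I would then run a formalized induction on a finite parameter $m$ whose invariant is the $(n{+}1)$-consistency statement $\con_{n+1}\big((U)_n^m\big)$ rather than a bare $\con_n$: the step is driven by Item \ref{t4} (to extract from the invariant the true $\Pi_{n+1}\subseteq\Sigma_{n+2}$ sentence $\con_n(\cdot)$) together with the level-$(n{+}1)$ analogue of Item \ref{t1} (to reabsorb that sentence while preserving $(n{+}1)$-consistency). Proposition \ref{BoxIndm} then supplies, provably in $\EA^+$, the boxed induction instance $\Box_U I_\theta(\bar m)$ discharging this into $\con_{n+1}$ of the $m$-th iterate for every $m$. The genuinely delicate point, which I expect to be the hardest, is matching these finite iterates to the intended ordinal scale: by the fine-structure behaviour of Turing progressions (Schmerl's formula, Proposition \ref{Schmerl}, item \ref{S2}) one has $(U)_n^m\equiv_n(T)_n^{e(\eta)\cdot(1+m)}$, so the reached ordinals $e(\eta)\cdot(1+m)$ exhaust $\omega^{\eta+1}=e(\eta+1)$ as $m\to\omega$; pushing this identification under the box and through $\con_n$ via Items \ref{p2} and \ref{basic2} is the fiddly part.

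Finally I note that the statement is precisely the inclusion half of Schmerl's formula, so a shorter route is available if one is willing to invoke Proposition \ref{Schmerl} directly. Its item \ref{S1} gives $(T)_{n+1}^\al\equiv_n(T)_n^{e(\al)}$ provably in $\EA^+$, i.e. the two theories share all $\Pi_{n+1}$ consequences. Since $(T)_n^{e(\al)}$ is axiomatized over $T\subseteq(T)_{n+1}^\al$ by the sentences $\con_n((T)_n^\gamma)$ with $\gamma\prec e(\al)$, each of which is $\Pi_{n+1}$ and is trivially a theorem of $(T)_n^{e(\al)}$, the relation $\equiv_n$ upgrades each such axiom to a theorem of $(T)_{n+1}^\al$; as $(T)_{n+1}^\al$ also proves $T$, it proves everything $(T)_n^{e(\al)}$ proves, yielding the inclusion. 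This last argument is elementary once Proposition \ref{Schmerl} is granted and formalizes routinely in $\EA^+$.
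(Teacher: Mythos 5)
Your overall skeleton matches the paper's proof: you reduce the inclusion via Lemma \ref{tools} to the claim that $(T)_{n+1}^\al$ proves $\con_n\big((T)_n^\gamma\big)$ for all $\gamma \prec e(\al)$, you run a reflexive transfinite induction, your limit case is in substance the paper's limit case, and you rightly identify the successor case as the crux and attack it with the formalized induction of Proposition \ref{BoxIndm}. The gap is in how you close the successor case, and, a fortiori, in your proposed shortcut: both lean essentially on Proposition \ref{Schmerl}, and Proposition \ref{Schmerl} is stated only for theories that are $\Pi_{n+1}$-axiomatized extensions of $\EA$. The lemma, however, is asserted for \emph{arbitrary} (\adjective presented) extensions $T$ of $\EA^+$, and that generality is genuinely used downstream: in the soundness proof of Axiom \ref{omega} the lemma is applied with $T = \Th_\varphi$ for arbitrary $\varphi \in \FLE$, and such theories are axiomatized by consistency statements $\con_m(\cdot)$ of complexity $\Pi_{m+1}$ with $m$ possibly much larger than $n$; e.g.\ for $\varphi = \la 5^1 \ra \top$ one has $\Th_\varphi = \EA^+ + \con_5(\EA^+)$, whose new axiom is $\Pi_6$, so the hypothesis of Proposition \ref{Schmerl} fails at exactly the levels $n < 5$ where the lemma must be invoked. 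For such $T$, neither Item \ref{S1} nor Item \ref{S2} of Proposition \ref{Schmerl} is available, so your identification $(U)_n^m \equiv_n (T)_n^{e(\eta)\cdot(1+m)}$, and the whole shortcut in your last paragraph, are unjustified precisely in the cases the paper needs. Note moreover that the half of Schmerl's formula you use is, in content, the very statement being proved, so when the hypothesis of Proposition \ref{Schmerl} fails you cannot hope to import it from the cited result; proving it is the task at hand.

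The paper closes the successor case without any appeal to Schmerl, and the trick is worth internalizing because it is exactly what removes the complexity assumption on $T$. Instead of your invariant $\con_{n+1}\big(((T)_{n+1}^\eta)_n^x\big)$, the paper takes $\theta(x) := \con_n\big((T)_n^{e(\eta)\cdot x}\big)$, so the ordinal bookkeeping is built into the invariant and no a posteriori matching of finite iterates with ordinals is required. The induction step inside $(T)_{n+1}^{\eta+1}$ then runs: from $\theta(p)$ and the axiom $\con_{n+1}\big((T)_{n+1}^\eta\big)$, push the $\Sigma_{n+2}$-sentence $\con_n\big((T)_n^{e(\eta)\cdot p}\big)$ inside via Lemma \ref{theorem:smallGoesInside} to get $\con_{n+1}\big(((T)_n^{e(\eta)\cdot p})_{n+1}^\eta\big)$, and then---this is the move your proposal avoids but which replaces Schmerl---apply the reflexive induction hypothesis \emph{to the shifted base theory} $(T)_n^{e(\eta)\cdot p}$, giving $((T)_n^{e(\eta)\cdot p})_n^{e(\eta)} \subseteq ((T)_n^{e(\eta)\cdot p})_{n+1}^\eta$; since the left-hand side equals $(T)_n^{e(\eta)\cdot(p+1)}$, Item \ref{basic1} of Lemma \ref{theorem:basicFactsOfTuringProgressions} yields $\con_{n+1}$, hence $\con_n$, of $(T)_n^{e(\eta)\cdot(p+1)}$, i.e.\ $\theta(p+1)$. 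This reading requires the reflexive induction statement to be uniform in the base theory (which the lemma's quantification over all extensions of $\EA^+$ provides), but it needs no assumption at all on the complexity of the axioms of $T$. If you rework your successor case with this invariant and this use of the RIH, your proposal becomes the paper's proof; as it stands, the Schmerl-dependent steps leave a genuine gap.
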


\begin{proof}
By reflexive transfinite induction on $\al$ with the base case being trivial. Thus, reasoning in $\EA^+$, the RIH is
\[
\Box_{\EA^+} \ \forall \, \be {\prec} \dot{\al} \ \Big(\, (T )_{n}^{e(\be)} \ \subseteq \  (T)_{n+1}^\be \, \Big)
\]
as R.I.H. We proceed by a case distinction on $\al$. 

{\bf If $\al \in \suc$}, i.e., $\al := \be + 1$, let $\chi$ be some arbitrary formula such that $\Box_{(T)_n^{e(\be +1 )}} \, \chi$. Note that $e(\be + 1) = e(\be) \cdot \omega$. Therefore we have 
\[
\Box_{(T)_n^{e(\be +1 )}} \, \chi \ \longrightarrow \  \exists \, m {\prec} \omega \ \Box_T \Big(\, \con_n \big(\, (T)_n^{e(\be) \cdot m}\, \big) \to \chi \, \Big),
\]
and by monotonicity of 
provability we obtain
\[
\exists \, m {\prec} \omega \ \Box_{(T)_{n+1}^{\beta +1}} \Big(\, \con_n \big(\, (T)_n^{e(\be) \cdot m}\, \big) \to \chi \, \Big).
\]
Let $\theta (x) := \con_n \big(\, (T)_n^{e(\be) \cdot \dot x} \, \big)$. Clearly, we are done once we show that 
\begin{equation}\label{equation:thetaUnderBox}
\forall \, m {\prec} \omega \ \Box_{(T)_{n+1}^{\beta +1}} \, \theta(\dot m).
\end{equation} 
We will prove \eqref{equation:thetaUnderBox} by applying Proposition \ref{BoxIndm}. Therefore, it suffices to show $\Box_{(T)_{n+1}^{\be + 1}} \Big( \ \theta(0) \, \wedge \, \forall x \ ( \, \theta (x) \, \to \, \theta ( s(x) )   \, ) \Big)$. Clearly $\Box_{(T)_{n+1}^{\be + 1}} \, \con_n (T)$, that is, $\Box_{(T)_{n+1}^{\be + 1}} \, \theta(0)$. Thus, we only need to show $\Box_{(T)_{n+1}^{\be + 1}} \forall x \ \big( \, \theta (x) \, \to \, \theta ( s(x) )   \, \big) $.

In order to prove this, we reason in ${(T)_{n+1}^{\be + 1}}$,--which gets us under the $\Box_{(T)_{n+1}^{\be + 1}}$--suppose that $\con_n ( (T)_n^{e(\be)\cdot p})$ and set out to prove $\con_n ( (T)_n^{e(\be)\cdot {(p+1)}})$. 
Since we reason in ${(T)_{n+1}^{\be + 1}}$ and assume $\con_n ( (T)_n^{e(\be)\cdot p})$, we also have 
\[
\con_n \big( \, (T)_n^{e(\be)\cdot p}\, \big) \, \land \, \con_{n+1} \big( \, (T)_{n+1}^\be \, \big).
\] 
By Lemma \ref{theorem:smallGoesInside} we can push the small consistency inside the bigger ones and obtain
\begin{equation}\label{equation:PuntoEducacion}
\con_{n+1} \Big(\, \big(\,(T)_n^{e(\be)\cdot p}\, \big)_{n+1}^\be \Big).
\end{equation}
Since we are reasoning under a box, we can now apply the RIH without a box. However, we apply the RIH to the base theory $(T)_n^{e(\be)\cdot p}$ so that we obtain 
\begin{equation}\label{equation:PuntoEdu}
\big(\,(T)_n^{e(\be)\cdot p}\, \big)_n^{e(\beta)} \ \subseteq \  \big(\,(T)_n^{e(\be)\cdot p}\, \big)_{n+1}^\beta.
\end{equation}
We note that $\big(\,(T)_n^{e(\be)\cdot p}\, \big)_n^{e(\beta)}\  = \ (T)_n^{e(\be)\cdot p + {e(\beta)}} \ = \ (T)_n^{e(\be)\cdot {(p + 1)}}$. Via this equation, combining \eqref{equation:PuntoEdu} with \eqref{equation:PuntoEducacion} and realizing that consistency of a theory yields the consistency of any of its subtheories, we obtain $\con_{n+1} \big(\, (T)_n^{e(\be)\cdot {(p + 1)}} \, \big)$. By monotonicity we conclude $\con_{n} \big(\, (T)_n^{e(\be)\cdot {(p + 1)}} \, \big)$ so that inside $(T)_{n+1}^{\beta +1}$ we have shown $\con_{n} \big(\, (T)_n^{e(\be)\cdot {p}} \, \big) \ \to \ \con_{n} \big(\, (T)_n^{e(\be)\cdot {(p + 1)}} \, \big)$ as was to be shown.\\
\smallskip


{\bf{In the case that $\al \in {\text{lim}}$}} we assume $\Box_{(T)_n^{e(\al)}} \chi$. Since $e$ is a continuous function, as before we have that  
\[
\exists \, \be {\prec} \al \ \Box_{T} \Big(\, \con_n \big(\, (T)_n^{e(\be)} \, \big) \to \chi \, \Big).
\] 
By monotonicity we obtain for this $\beta$ that $\Box_{(T)^\alpha_{n+1}} \Big(\, \con_n \big(\, (T)_n^{e(\be)} \, \big) \to \chi \, \Big)$. Since $\Box_{(T)_{n+1}^\al} \con_n \big(\, (T)_{n+1}^{\be} \, \big)$ by the RIH we get  $\Box_{(T)_{n+1}^\al} \con_n \big(\, (T)_n^{e(\be)} \, \big)$ so that $\Box_{(T)_{n+1}^\al} \chi$.

\end{proof}

With respect to Axiom \ref{MS1}, the soundness proof rests on Item \ref{S2} of Proposition \ref{Schmerl} (Schmerl's formula).

\begin{lemma} \label{equivUnderSmall}
For any theories $T$ and $U$ extending $\EA$, $0 < m$ and $\al < \varepsilon_0$, if, provably in $\EA^+$, $T \equiv_{n+m} U$, then
\[
\EA^+ \vdash \con_n \big(\, (T)_n^\al \, \big) \leftrightarrow \con_n \big(\, (U)_n^\al \, \big).
\]
\end{lemma}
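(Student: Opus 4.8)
The plan is to prove the biconditional by reflexive induction on $\al$, reasoning throughout in $\EA^+$. First note that since $0<m$ we have $n+m+1\geq n+2$, so the hypothesis $T\equiv_{n+m}U$ already yields $T\equiv_{n+1}U$, i.e.\ $T$ and $U$ prove the same $\Pi_{n+2}$-sentences; it is this stronger agreement, rather than mere $\equiv_n$, that the argument will consume. Since the hypothesis is symmetric in $T$ and $U$, and the reflexive induction hypothesis (RIH) will itself be the symmetric biconditional $\con_n((T)_n^{\dot\be})\leftrightarrow\con_n((U)_n^{\dot\be})$, it suffices in each step to establish one implication, say $\con_n((U)_n^\al)\to\con_n((T)_n^\al)$, the converse following by interchanging $T$ and $U$.

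For the base case $\al=0$ the claim is exactly Item \ref{p2} of Lemma \ref{theorem:basicFactsOfTuringProgressions} applied to $T\equiv_n U$. For $\al\succ0$ I would assume $\con_n((U)_n^\al)$ and take an arbitrary $\pi\in\Pi_{n+1}$ with $\Box_{(T)_n^\al}\pi$, aiming to derive ${\sf Tr}_{n+1}(\pi)$. Applying Lemma \ref{tools} gives some $\gamma\prec\al$ with $\Box_T\big(\con_n((T)_n^\gamma)\to\pi\big)$. The formula $\con_n((T)_n^\gamma)\to\pi$ is $\Pi_{n+2}$ (its antecedent is $\Pi_{n+1}$, so its negation is $\Sigma_{n+1}\subseteq\Pi_{n+2}$, and $\pi\in\Pi_{n+1}\subseteq\Pi_{n+2}$), so the agreement $T\equiv_{n+1}U$ transfers it to $\Box_U\big(\con_n((T)_n^\gamma)\to\pi\big)$, and by monotonicity of provability to $\Box_{(U)_n^\al}\big(\con_n((T)_n^\gamma)\to\pi\big)$. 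Next, from $\gamma\prec\al$ we have $\gamma+1\preceq\al$, so $\con_n((U)_n^\gamma)$ is a theorem of $(U)_n^\al$ (Item \ref{noLabel} of Lemma \ref{theorem:basicFactsOfTuringProgressions}), while the RIH, imported under $\Box_{(U)_n^\al}$ since $\EA^+\subseteq(U)_n^\al$, provides $\Box_{(U)_n^\al}\big(\con_n((U)_n^\gamma)\leftrightarrow\con_n((T)_n^\gamma)\big)$; together these give $\Box_{(U)_n^\al}\con_n((T)_n^\gamma)$ and hence $\Box_{(U)_n^\al}\pi$. As $\pi\in\Pi_{n+1}$, the assumption $\con_n((U)_n^\al)$ finally yields ${\sf Tr}_{n+1}(\pi)$, as required.

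The main obstacle, and the reason the hypothesis is $\equiv_{n+m}$ with $0<m$ rather than plain $\equiv_n$, is the complexity bookkeeping in the transfer step: the natural object to move from $T$ to $U$ is $\con_n((T)_n^\gamma)\to\pi$, which is $\Pi_{n+2}$ and not $\Pi_{n+1}$, so $\equiv_n$-agreement would not suffice and one genuinely needs agreement one level higher. The remaining care is the usual reflexive-induction discipline, namely that the induction hypothesis is available only under a $\Box_{\EA^+}$, which is harmless here since every theory in sight extends $\EA^+$ and so the hypothesis can be pulled under the relevant provability operator, together with the observation that Lemma \ref{tools} handles successor and limit $\al$ uniformly, so no separate case distinction on the shape of $\al$ is needed.
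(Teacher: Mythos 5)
Your proposal is correct and follows essentially the same route as the paper's own proof: reflexive induction on $\al$, decomposition via Lemma \ref{tools}, transfer of the $\Pi_{n+2}$-formula $\con_n((\cdot)_n^\gamma)\to\pi$ using the hypothesis (which is exactly where $0<m$ is consumed), importing the RIH under the relevant provability operator, and finishing with the assumed consistency statement. The only cosmetic difference is that you argue the direction with the roles of $T$ and $U$ interchanged relative to the paper and appeal to symmetry, whereas the paper writes out one direction and declares the other analogous.
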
 
\begin{proof}
Reasoning in $\EA^+$, suppose that $T \equiv_{n+m} U$. By reflexive transfinite induction on $\al$, assume as RIH $\Box_{\EA^+} \con_n \big(\, (T)_n^\be \, \big) \leftrightarrow \con_n \big(\, (U)_n^\be \, \big)$ for all $\be \prec \al$. \\

For the left-to-right implication, assume $\con_n \big(\, (T)_n^\al \, \big)$ and assume for $\pi \in \Pi^0_{n+1}$, $\Box_{(U)_n^\al} \pi$. Thus,
\[
\Box_{U} \, \Big( \, \con_n \big(\, (U)_n^\delta \, \big) \to \pi \, \Big)
\]
for $\delta \prec \al$. Since $\pi \in \Pi^0_{n+1}$ and $n < n + m$, by assumption,
\[
\Box_{T} \, \Big( \, \con_n \big(\, (U)_n^\delta \, \big) \to \pi \, \Big)
\]
and by monotonicity,
\[
\Box_{(T)_n^\al} \, \Big( \, \con_n \big(\, (U)_n^\delta \, \big) \to \pi \, \Big).
\]
Observe that
\[
\Box_{(T)_n^\al} \, \con_n \big(\, (T)_n^\delta \, \big),
\] 
and since by monotonicity the RIH holds provably in $(T)_n^\al$, we conclude that $\Box_{(T)_n^\al} \pi$. For the other other implication we reason analogously.

\end{proof}

The following propositions make use of the previous result in order to obtain conservativity results between {\sf INF}'s and {\sf MNF}'s.

\begin{proposition} \label{SoundSch1}
Given $\psi := \MNF{0}{k} \in {\sf MNF}$ and $A\in {\sf INF}$ such that $A = \mathcal{I}(\psi)$. We then have that provably in $\EA^+$:
\[
\Th_A \equiv_{n_0} (\EA^+)_{n_0}^{\al_0}.
\]
\end{proposition}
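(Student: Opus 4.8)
The plan is to argue by induction on the length $k+1$ of the monomial normal form $\psi = \MNF{0}{k}$, peeling off the leading modality at each step. The base case $k=0$ is immediate: here $\psi = \la n_0^{\al_0}\ra\top$ and, by Theorem \ref{MNFINC}, its associated worm is $A = \la n_0^{\al_0}\ra\top$ itself (since $\be_0 = \al_k = \al_0$), so that $\Th_A = (\EA^+)_{n_0}^{\al_0}$ and the equivalence holds outright.

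For the inductive step I write $\psi = \la n_0^{\al_0}\ra\top \wedge \psi'$ with $\psi' := \MNF{1}{k} \in {\sf MNF}$. By Theorem \ref{MNFINC} together with Corollary \ref{peelHead} we have $A = \la n_0^{\be_0}\ra A'$, where $A' := \mathcal{I}(\psi') = \la n_1^{\be_1}\ra\ldots\la n_k^{\be_k}\ra\top$ and $\be_0 = -1 + \frac{\al_0}{e^{n_1-n_0}(\al_1)}$, the division being exact thanks to the technical clause of Definition \ref{definition:MNFs}. By the arithmetical interpretation (Definition \ref{def}) we have $\Th_A = (\Th_{A'})_{n_0}^{\be_0}$, while the induction hypothesis applied to $\psi'$ yields $\Th_{A'} \equiv_{n_1} (\EA^+)_{n_1}^{\al_1}$.

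The crux, and the step I expect to demand the most care, is to promote this base-theory equivalence to an equivalence of the two $n_0$-progressions built on top of it, namely
\[
(\Th_{A'})_{n_0}^{\be_0} \ \equiv_{n_0}\ \big((\EA^+)_{n_1}^{\al_1}\big)_{n_0}^{\be_0}.
\]
This is where the hypotheses genuinely interact. Since the bases of an {\sf MNF} strictly increase, $m := n_1-n_0 > 0$, so $\Th_{A'} \equiv_{n_0+m} (\EA^+)_{n_1}^{\al_1}$ with $m>0$, exactly the shape required by Lemma \ref{equivUnderSmall}. The argument proceeds via Lemma \ref{tools}: provability of a $\Pi_{n_0+1}$-sentence $\pi$ in $(\Th_{A'})_{n_0}^{\be_0}$ is witnessed by provability in $\Th_{A'}$ of a conditional $\con_{n_0}((\Th_{A'})_{n_0}^{\gamma}) \to \pi$ with $\gamma \prec \be_0$, a sentence of complexity $\Pi_{n_0+2}$; as $n_0+2 \leq n_1+1$, the two $\equiv_{n_0+m}$-equivalent base theories agree on its provability, and inside the box Lemma \ref{equivUnderSmall} lets me swap $\con_{n_0}((\Th_{A'})_{n_0}^{\gamma})$ for $\con_{n_0}(((\EA^+)_{n_1}^{\al_1})_{n_0}^{\gamma})$. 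The trivial monotonicity direction of Lemma \ref{tools} then feeds this back into provability in $((\EA^+)_{n_1}^{\al_1})_{n_0}^{\be_0}$, and the symmetric reasoning gives the converse inclusion.

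Finally I would apply Item \ref{S2} of Proposition \ref{Schmerl} (Schmerl's formula) to the base $\EA^+$, which is $\Pi_1$- hence $\Pi_{n_0+1}$-axiomatized, with $\be = \al_1 \succ 0$ and parameters $m = n_1-n_0$, $\al = \be_0$, to obtain
\[
\big((\EA^+)_{n_1}^{\al_1}\big)_{n_0}^{\be_0} \ \equiv_{n_0}\ (\EA^+)_{n_0}^{\,e^{n_1-n_0}(\al_1)\cdot(1+\be_0)}.
\]
It then only remains to verify the ordinal identity $e^{n_1-n_0}(\al_1)\cdot(1+\be_0) = \al_0$, which is immediate from $1+\be_0 = \frac{\al_0}{e^{n_1-n_0}(\al_1)}$. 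Chaining the two displayed equivalences by transitivity of $\equiv_{n_0}$ yields $\Th_A = (\Th_{A'})_{n_0}^{\be_0} \equiv_{n_0} (\EA^+)_{n_0}^{\al_0}$, completing the induction.
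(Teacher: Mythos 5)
Your proof is correct and follows essentially the same route as the paper's: induction on $k$, peeling off the head modality, transferring the $\Pi_{n_0+2}\subseteq\Pi_{n_1+1}$ conditional obtained from Lemma \ref{tools} via the induction hypothesis, swapping consistency statements with Lemma \ref{equivUnderSmall}, and finishing with Schmerl's formula S2 and the ordinal identity $e^{n_1-n_0}(\al_1)\cdot(1+\be_0)=\al_0$. The only difference is presentational — you isolate the intermediate equivalence $(\Th_{A'})_{n_0}^{\be_0}\equiv_{n_0}((\EA^+)_{n_1}^{\al_1})_{n_0}^{\be_0}$ as an explicit step, which the paper proves inline.
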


\begin{proof}
The proof goes by induction on $k$, with the base case being trivial. For the inductive step, recall that if $A = \mathcal{I}(A)$ then $A := \la n_0^{\be_0} \ra \ldots \la n_k^{\al_k} \ra \top$ where $\be_i = -1 + \frac{\al_i}{e^{n_{i+1} - n_i}(\al_{i+1})}$. Assume for some $\pi \in \Pi_{n_0 + 1}^0$ that $\Box_{\Th_A} \pi$. Hence, we have that
\[
\Box_{\big( \, \ldots (\EA^+)_{n_{k+1}}^{\al_{k+1}} \ldots \, \big)_{n_1}^{\be_1}} \Big( \, \con_{n_0} \big(\,( \ldots (\EA^+)_{n_{k+1}}^{\al_{k+1}} \ldots )_{n_0}^{\delta} \, \big) \ \, \to \ \, \pi \, \Big)
\]
for some $\delta \prec \be_0$. Notice that $\Big( \, \con_{n_0} \big(\,( \ldots (\EA^+)_{n_{k+1}}^{\al_{k+1}} \ldots )_{n_0}^{\delta} \, \big) \ \, \to \ \, \pi \, \Big) \in \Pi_{n_0 + 2}^0$ whence also in $\Pi_{n_1 + 1}^0$ so that by the external induction on $k$ we get that 
\[
\Box_{(\EA^+)_{n_1}^{\al_1}} \Big( \, \con_{n_0} \big(\,( \ldots (\EA^+)_{n_{k+1}}^{\al_{k+1}} \ldots )_{n_0}^{\delta} \, \big) \ \, \to \ \, \pi \, \Big).
\] 

By monotonicity it follows that  
\[
\Box_{((\EA^+)_{n_1}^{\al_1})_{n_0}^{\be_0}} \Big( \, \con_{n_0} \big(\,( \ldots (\EA^+)_{n_{k+1}}^{\al_{k+1}} \ldots )_{n_0}^{\delta} \, \big) \ \, \to \ \, \pi \, \Big).
\]
By the external IH, we have that $( \ldots (\EA^+)_{n_{k+1}}^{\al_{k+1}} \ldots )_{n_1}^{\be_1}\equiv_{n_1}(\EA^+)_{n_1}^{\al_1}$, and since $n_1 > n_0$, we can apply Lemma \ref{equivUnderSmall} obtaining that
\[
\con_{n_0}  \big( \, ( \ldots (\EA^+)_{n_{k+1}}^{\al_{k+1}} \ldots )_{n_0}^{\delta} \, \big) \leftrightarrow \con_{n_0} \big( \, ((\EA^+)_{n_1}^{\al_1})_{n_0}^{\delta} \, \big).
\]
Thus, since $\delta \prec \be_0$,  $\Box_{((\EA^+)_{n_1}^{\al_1})_{n_0}^{\be_0}} \pi$. Being $\pi \in \Pi_{n_0 + 1}^0$, we can apply Schmerl's formula obtaining 
\[
\Box_{(\EA^+)_{n_0}^{e^{n_1 - n_0}(\al_1) \cdot (1 + \be_0) }} \pi.
\] 
Finally, since $\psi \equiv \mathcal{I}(\psi)$ i.e. $\psi \equiv A$, we know that $e^{n_1 - n_0}(\al_1) \cdot (1 + \be_0) = \al_0$, therefore $\Box_{(\EA^+)_{n_0}^{\al_0 }} \pi$.\\

For the other direction, the reasoning is analogous. Assume $\Box_{(\EA^+)_{n_0}^{\al_0 }} \pi$ for some $\pi \in \Pi_{n_0 + 1}^0$. By Schmerl's formula, $\Box_{((\EA^+)_{n_1}^{\al_1})_{n_0}^{\be_0}} \pi$ and by the formalized deduction theorem:
\[
\Box_{(\EA^+)_{n_1}^{\al_1}} \Big( \, \con_{n_0} \big( \, ((\EA^+)_{n_1}^{\al_1})_{n_0}^{\delta} \, \big) \to \pi \, \Big)
\]
for some $\delta \prec \be_0$. Since $\Big( \, \con_{n_0} \big( \, ((\EA^+)_{n_1}^{\al_1})_{n_0}^{\delta} \, \big) \to \pi \, \Big)$ is in $\Pi_{n_1 + 1}^0$, by the external IH we obtain that 
\[
\Box_{\big( \, \ldots (\EA^+)_{n_{k+1}}^{\al_{k+1}} \ldots \, \big)_{n_1}^{\be_1}} \Big( \, \con_{n_0} \big( \, ((\EA^+)_{n_1}^{\al_1})_{n_0}^{\delta} \, \big) \to \pi \, \Big)
\]
and by monotonicity:
\[
\Box_{\big( \,( \, \ldots (\EA^+)_{n_{k+1}}^{\al_{k+1}} \ldots \, )_{n_1}^{\be_1} \, \big)_{n_0}^{\be_0}} \Big( \, \con_{n_0} \big( \, ((\EA^+)_{n_1}^{\al_1})_{n_0}^{\delta} \, \big) \to \pi \, \Big).
\]
As we reasoned before, by external IH and Lemma \ref{equivUnderSmall} we know that
\[
\con_{n_0}  \big( \, ( \ldots (\EA^+)_{n_{k+1}}^{\al_{k+1}} \ldots )_{n_0}^{\delta} \, \big) \leftrightarrow \con_{n_0} \big( \, ((\EA^+)_{n_1}^{\al_1})_{n_0}^{\delta} \, \big)
\]
Thus, $\Box_{\big( \,( \, \ldots (\EA^+)_{n_{k+1}}^{\al_{k+1}} \ldots \, )_{n_1}^{\be_1} \, \big)_{n_0}^{\be_0}} \pi$ i.e. $\Box_{\T{A}} \pi$.

\end{proof}

\begin{lemma}
Given $\psi := \MNF{0}{k} \in {\sf MNF}$ and $A\in {\sf INF}$ such that $A = \mathcal{I}(\psi)$, provably in $\EA^+$:

$$ \Th_A \equiv \Th_{\psi} .$$ \label{SoundSch2}
\end{lemma}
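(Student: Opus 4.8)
The plan is to argue by an external induction on the length $k$ of $\psi$, leaning on the fact that Proposition \ref{SoundSch1} already pins down the $\Pi_{n_0+1}$-behaviour of $\Th_A$, so that the induction only has to propagate this through the higher complexity levels. The base case $k=0$ is immediate, since then $A=\psi=\la n_0^{\al_0}\ra\top$ and $\Th_A=\Th_\psi=(\EA^+)_{n_0}^{\al_0}$. For the inductive step write $\psi=\la n_0^{\al_0}\ra\top\wedge\psi'$ with $\psi':=\MNF{1}{k}\in{\sf MNF}$ and $A=\la n_0^{\be_0}\ra B$; by Corollary \ref{peelHead} we have $B=\mathcal I(\psi')$, so the induction hypothesis gives $\Th_B\equiv\Th_{\psi'}$ provably in $\EA^+$. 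Since $\Th_A=(\Th_B)_{n_0}^{\be_0}$ while $\Th_\psi=(\EA^+)_{n_0}^{\al_0}+\Th_{\psi'}$, applying Lemma \ref{theorem:TuringProgressionsMonotoneInBaseTheory} in both directions yields $\Th_A\equiv(\Th_{\psi'})_{n_0}^{\be_0}$, and it remains to establish, provably in $\EA^+$, that
\[
(\Th_{\psi'})_{n_0}^{\be_0}\ \equiv\ (\EA^+)_{n_0}^{\al_0}+\Th_{\psi'}.
\]

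For the inclusion $(\EA^+)_{n_0}^{\al_0}+\Th_{\psi'}\subseteq(\Th_{\psi'})_{n_0}^{\be_0}$ I would note that $\Th_{\psi'}$ sits inside $(\Th_{\psi'})_{n_0}^{\be_0}$ as its base theory (Item \ref{noLabel} of Lemma \ref{theorem:basicFactsOfTuringProgressions}), while $(\EA^+)_{n_0}^{\al_0}\subseteq(\Th_{\psi'})_{n_0}^{\be_0}$ follows from Proposition \ref{SoundSch1}: the non-logical axioms of $(\EA^+)_{n_0}^{\al_0}$ over $\EA^+$ are the statements $\con_{n_0}((\EA^+)_{n_0}^{\gamma})$ with $\gamma<\al_0$, which are $\Pi_{n_0+1}$ and hence transfer along $(\Th_{\psi'})_{n_0}^{\be_0}\equiv_{n_0}(\EA^+)_{n_0}^{\al_0}$.

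The real work, and where I expect the main obstacle, is the converse inclusion $(\Th_{\psi'})_{n_0}^{\be_0}\subseteq(\EA^+)_{n_0}^{\al_0}+\Th_{\psi'}$, since it forces us to replace the union-of-progressions base $\Th_{\psi'}$ by a single progression. The axioms of $(\Th_{\psi'})_{n_0}^{\be_0}$ over its base are the statements $\con_{n_0}((\Th_{\psi'})_{n_0}^{\gamma})$ for $\gamma<\be_0$, and I would show each is already provable in $(\EA^+)_{n_0}^{\al_0}$. By Proposition \ref{SoundSch1} applied to $\psi'$ and $B$ together with the induction hypothesis we have $\Th_{\psi'}\equiv_{n_1}(\EA^+)_{n_1}^{\al_1}$; since $n_1>n_0$, Lemma \ref{equivUnderSmall} gives
\[
\con_{n_0}\big((\Th_{\psi'})_{n_0}^{\gamma}\big)\ \leftrightarrow\ \con_{n_0}\big(((\EA^+)_{n_1}^{\al_1})_{n_0}^{\gamma}\big).
\]
Next, Schmerl's formula (Item \ref{S2} of Proposition \ref{Schmerl}) gives $((\EA^+)_{n_1}^{\al_1})_{n_0}^{\gamma}\equiv_{n_0}(\EA^+)_{n_0}^{e^{n_1-n_0}(\al_1)\cdot(1+\gamma)}$, and Item \ref{p2} of Lemma \ref{theorem:basicFactsOfTuringProgressions} upgrades this $\equiv_{n_0}$-relation to a full equivalence of the associated $n_0$-consistency statements.

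Finally, the ${\sf MNF}/{\sf INF}$ correspondence supplies $\al_0=e^{n_1-n_0}(\al_1)\cdot(1+\be_0)$, so for every $\gamma<\be_0$ we have $e^{n_1-n_0}(\al_1)\cdot(1+\gamma)<\al_0$, whence $\con_{n_0}((\EA^+)_{n_0}^{e^{n_1-n_0}(\al_1)\cdot(1+\gamma)})$ is literally an axiom of $(\EA^+)_{n_0}^{\al_0}$. Chaining these biconditionals, each provable in $\EA^+$ and hence available in $(\EA^+)_{n_0}^{\al_0}+\Th_{\psi'}$, closes the inclusion and the induction. The delicate points to watch are that every link in the chain is a genuine biconditional provable in $\EA^+$ rather than a mere $\equiv_{n_0}$-relation (which is exactly why Item \ref{p2} is invoked), and the verification of the ordinal inequality $e^{n_1-n_0}(\al_1)\cdot(1+\gamma)<\al_0$.
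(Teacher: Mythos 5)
Your proof is correct, and it rests on the same central ingredient as the paper's proof, namely Proposition \ref{SoundSch1}; but it is organized along a genuinely different decomposition. The paper treats the two inclusions asymmetrically: for $\Th_A \subseteq \Th_\psi$ it applies the formalized deduction theorem repeatedly to a given $\Th_A$-proof, extracting in one pass a conjunction of consistency statements $\con_{n_i}\big((\ldots)_{n_i}^{\delta_i}\big)$, $\delta_i \prec \be_i$, at \emph{all} levels $i < k$ simultaneously, and then verifies each conjunct inside $\Th_\psi$ using Proposition \ref{SoundSch1}; only for $\Th_\psi \subseteq \Th_A$ does it run an induction on $k$. You instead run a single induction on $k$ that peels one modality at a time: Corollary \ref{peelHead}, the induction hypothesis and Lemma \ref{theorem:TuringProgressionsMonotoneInBaseTheory} convert $\Th_A$ into $(\Th_{\psi'})_{n_0}^{\be_0}$, after which the whole lemma reduces to the two-theory comparison $(\Th_{\psi'})_{n_0}^{\be_0} \equiv (\EA^+)_{n_0}^{\al_0} + \Th_{\psi'}$, settled axiom-by-axiom: one inclusion by $\Pi_{n_0+1}$-transfer along the $\equiv_{n_0}$-relation coming from Proposition \ref{SoundSch1}, the other by chaining Lemma \ref{equivUnderSmall}, Schmerl's formula (Item \ref{S2} of Proposition \ref{Schmerl}), and Item \ref{p2} of Lemma \ref{theorem:basicFactsOfTuringProgressions}, together with the ordinal identity $\al_0 = e^{n_1-n_0}(\al_1)\cdot(1+\be_0)$ from Theorem \ref{MNFINC}. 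In effect you re-derive inside the induction step the consistency-statement equivalences that the paper delegates to (the proof of) Proposition \ref{SoundSch1} and to its terse ``easy induction'' remark. What your route buys is transparency of the $\EA^+$-formalization: every link is a quoted lemma, and the delicate upgrade from an $\equiv_{n_0}$-relation to a genuine biconditional between consistency statements is made explicit via Item \ref{p2}, which is exactly the point a careless argument would fumble. The cost is some duplication of machinery already embedded in Proposition \ref{SoundSch1}. One step you should make explicit: passing from ``all axioms of $(\Th_{\psi'})_{n_0}^{\be_0}$ are provable in $(\EA^+)_{n_0}^{\al_0}+\Th_{\psi'}$'' to the full inclusion of theories, provably in $\EA^+$, is smoothest via Lemma \ref{tools}, which reduces any proof in a progression to the base theory plus a single consistency axiom; with that remark added, your argument is complete.
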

\begin{proof}
For the left-to-right implication, we formalized the following reasoning in $\EA^+$. Consider $\chi$ such that $\Box_ {\Th_A} \chi$. Therefore, by successively applying formalized deduction theorem, 
\[
\Box_{(\EA^+)_{n_k}^{\al_k}} \Big(\, \bigwedge_{0 \leq i < k} \con_{n_i} \big( \, (\ldots (T)_{n_k}^{\al_k} \ldots )_{n_i}^{\delta_i} \, \big) \to \chi \, \Big) 
\]
for some $\delta_i \prec \be_i$. And by monotonicity,
\[
\Box_{\Th_\psi} \Big(\, \bigwedge_{0 \leq i < k} \con_{n_i} \big( \, (\ldots (T)_{n_k}^{\al_k} \ldots )_{n_i}^{\delta_i} \, \big) \to \chi \, \Big) . 
\] 
With the help of Proposition \ref{SoundSch1}, we finally get that $\Box_{\Th_{\psi}} \chi.$
Right-to-left implication follows by an easy induction on $k$, applying Proposition \ref{SoundSch1}.
\end{proof}

With this previous result, we are now ready to prove the soundness of Axiom \ref{MS1}. 

\begin{lemma}Axiom \ref{MS1} is sound.
\end{lemma}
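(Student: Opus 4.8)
The plan is to reduce the arithmetical equivalence expressed by Axiom~\ref{MS1} to the conservativity result of Lemma~\ref{SoundSch2} together with monotonicity of Turing progressions in the base theory (Lemma~\ref{theorem:TuringProgressionsMonotoneInBaseTheory}), so that no fresh reflexive induction is needed. Write $\psi := \MNF{0}{k}$, put $\gamma := e^{n_0 - n}(\al_0) \cdot (1 + \al)$, and let $\psi' := \la n^\gamma \ra \top \land \psi$ be the right-hand side of the axiom. Under the FTP interpretation the left-hand side denotes $(\Th_\psi)_n^\al$ and the right-hand side denotes $\Th_{\psi'}$, so the soundness of the equivalence amounts to proving, provably in $\EA^+$, that $(\Th_\psi)_n^\al \equiv \Th_{\psi'}$. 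The degenerate case $\al = 0$ is immediate: there $\gamma = e^{n_0-n}(\al_0)$ and, using the already-established soundness of Axiom~\ref{omega}, $(\EA^+)_n^{e^{n_0-n}(\al_0)} \subseteq (\EA^+)_{n_0}^{\al_0} \subseteq \Th_\psi$, whence both sides collapse to $\Th_\psi$. So I assume $\al \geq 1$ below.

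The first real step is to recognize that $\psi'$ is itself in ${\sf MNF}$: clauses (a) and (b) of Definition~\ref{definition:MNFs} hold since $\psi \in {\sf MNF}$ and $n < n_0$, and clause (c) holds because $\gamma = e^{n_0-n}(\al_0) \cdot (1+\al) = e^{n_0-n}(\al_0) \cdot (2 + \be)$ for a suitable $\be$ once $\al \geq 1$. The crux is then to compute its increasing normal form. Writing $A := \mathcal{I}(\psi) \in {\sf INF}$, Theorem~\ref{MNFINC} tells us $A$ has the same modality bases as $\psi$. Applying the same theorem to $\psi'$, the outermost exponent of $\mathcal{I}(\psi')$ is $-1 + \frac{\gamma}{e^{n_0-n}(\al_0)} = -1 + (1+\al) = \al$, while each subsequent exponent is computed from the same data as for $A$ and hence agrees with the corresponding exponent of $A$. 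Therefore $\mathcal{I}(\psi') = \la n^\al \ra A$, which indeed lies in ${\sf INF}$ because $n < n_0$ and $\al > 0$. I expect this ordinal-arithmetic identification to be the main obstacle, since it relies on the well-definedness of the divisions guaranteed by Item~\ref{item:TechnicalMNFclause} of Definition~\ref{definition:MNFs}.

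With the normal forms matched, the conclusion follows by invoking Lemma~\ref{SoundSch2} twice. Applied to the ${\sf MNF}$ $\psi'$ it gives $\Th_{\psi'} \equiv \Th_{\mathcal{I}(\psi')}$; since $\mathcal{I}(\psi') = \la n^\al \ra A$ this right-hand side is $(\Th_A)_n^\al$. Applied to $\psi$ it gives $\Th_A \equiv \Th_\psi$, and then Lemma~\ref{theorem:TuringProgressionsMonotoneInBaseTheory} used in both directions yields $(\Th_A)_n^\al \equiv (\Th_\psi)_n^\al$. Chaining these provable equivalences gives $\Th_{\psi'} \equiv (\Th_\psi)_n^\al$, which is precisely the arithmetical statement interpreting $\la n^\al \ra \psi \equiv \psi'$ under the FTP interpretation. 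This establishes the soundness of Axiom~\ref{MS1}.
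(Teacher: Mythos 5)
Your proof is correct and follows essentially the same route as the paper's own argument: both identify the right-hand side as an {\sf MNF} whose increasing normal form is $\la n^\al \ra \, \mathcal{I}(\psi)$, apply Lemma \ref{SoundSch2} twice, and transfer the equivalence through the progression via monotonicity in the base theory (what the paper calls ``necessitation''). Your explicit treatment of the degenerate case $\al = 0$ and of the ordinal-division computation is a welcome refinement of details the paper leaves implicit, but it does not change the underlying approach.
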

\begin{proof} Proof follows from Lemma \ref{SoundSch2} by means of the following reasoning within $\EA^+$. Let $\psi := \la n_0^{e^{n_0 - n} (\al_0) \cdot (1 + \al)} \ra \top \, \land \, \MNF{0}{k}$ and $\psi' := \MNF{0}{k} $. Since we have $\psi, \ \psi' \in {\sf MNF}$, let $A' = \mathcal{I}(\psi')$ and $A = \la n^\al \ra \, A' = \mathcal{I}(\psi)$. Thus, by Lemma \ref{SoundSch2} we have that
\[
\Th_{\psi} \equiv \Th_{A}
\]
and
\[
\Th_{\psi'} \equiv \Th_{A'}.
\]
Hence by necessitation,
\[
\Th_{A} \equiv \big( \, \Th_{\psi'} \, \big)_n^\al. 
\]
Therefore $\Th_{\psi} \equiv \big( \, \Th_{\psi'} \, \big)_n^\al$.

\end{proof}

\subsection{Conservativity, Uniqueness of {\sf MNF}'s and Completeness of $\TPr$} \label{boundCompl}

In this section, we shall prove the uniqueness of ${\sf MNF}$'s (Theorem \ref{MNFUNIQ}) together with some important facts that characterize the derivability in $\TPr$. With these results we shall show the completeness of our system in Theorem \ref{Compltnss}.\\

The following Corollary is a result of combining arithmetical soundness of $\TPr$ together with G\"odel's second incompleteness Theorem.

\begin{cor}
For any formula $\varphi \in \FLE$, $n < \omega$ and $\al < \varepsilon_0$, the sequent $\varphi \vdash \la n^\al \ra \varphi$ is not derivable in $\TPr$. \label{NonDerivable}
\end{cor}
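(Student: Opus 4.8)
The plan is to argue by contraposition, converting a hypothetical $\TPr$-derivation into an arithmetical statement that is ruled out by G\"odel's second incompleteness theorem. First a remark on the range of $\al$: by the convention $\la n^0 \ra \varphi = \varphi$, the sequent $\varphi \vdash \la n^0 \ra \varphi$ is merely an instance of the identity axiom, so the content of the corollary is for $\al \succ 0$, which I assume from now on.

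Suppose toward a contradiction that $\TPr$ derives $\varphi \vdash \la n^\al \ra \varphi$. By the soundness theorem (Theorem \ref{soundness}) together with the FTP interpretation, this yields $\EA^+ \vdash \forall x\,(\Box_{(\Th_\varphi)_n^\al}(x) \to \Box_{\Th_\varphi}(x))$, i.e. $\EA^+ \vdash (\Th_\varphi)_n^\al \subseteq \Th_\varphi$ in the notation of Section \ref{arithint}. Since $\al \succ 0$, the sentence $\con_n(\Th_\varphi)$ is, provably in $\EA^+$, a theorem of $(\Th_\varphi)_n^\al$: by Item \ref{t3} of Lemma \ref{theorem:smallGoesInside} (with base $\Th_\varphi$ and exponent $0$, noting $(\Th_\varphi)_n^0 = \Th_\varphi$) we have $(\Th_\varphi)_n^1 \equiv \Th_\varphi + \con_n(\Th_\varphi)$, and by Item \ref{noLabel} of Lemma \ref{theorem:basicFactsOfTuringProgressions} we have $(\Th_\varphi)_n^1 \subseteq (\Th_\varphi)_n^\al$; hence $\EA^+ \vdash \Box_{(\Th_\varphi)_n^\al}\con_n(\Th_\varphi)$. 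Instantiating the inclusion at $x = \Gn{\con_n(\Th_\varphi)}$ gives $\EA^+ \vdash \Box_{\Th_\varphi}\con_n(\Th_\varphi)$.

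Next I would descend from $n$-consistency to plain consistency. Iterating Item \ref{t4} of Lemma \ref{theorem:smallGoesInside} ($\con_{m+1} \to \con_m$) from $n$ down to $0$, and noting that $\con_0(\Th_\varphi)$ entails the ordinary consistency statement $\con(\Th_\varphi)$, one obtains $\EA^+ \vdash \Box_{\Th_\varphi}\con(\Th_\varphi)$. Since $\EA^+$ is sound, this true $\Sigma_1$-sentence reflects a genuine fact, namely $\Th_\varphi \vdash \con(\Th_\varphi)$.

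It then remains to observe that $\Th_\varphi$ is consistent, for then G\"odel's second incompleteness theorem (applicable because $\Th_\varphi$ is a $\Delta_0$-presented extension of $\EA$) gives $\Th_\varphi \not\vdash \con(\Th_\varphi)$, the desired contradiction. Consistency follows from soundness of $\Th_\varphi$, which I would prove by induction on $\varphi$: the base theory $\EA^+$ is sound; a union of sound theories is sound (the conjunction clause of the interpretation); and adjoining the true $\Pi_{n+1}$-sentences $\con_n((\Th_\psi)_n^\be)$ to a sound theory preserves soundness (the modality clause, using genuine well-foundedness below $\varepsilon_0$ in the metatheory). The step requiring the most care is precisely this soundness-to-consistency bookkeeping together with the shift between $\EA^+$-provable and actually true statements; once these are in place, the remaining manipulations are routine applications of the basic facts about Turing progressions collected in Section \ref{arithint}.
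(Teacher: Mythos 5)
Your proof is correct and follows essentially the same route as the paper: the paper's entire justification is the one-line remark that the corollary results from combining the arithmetical soundness of $\TPr$ (Theorem \ref{soundness}) with G\"odel's second incompleteness theorem, which is exactly the argument you spell out. Your extra details --- restricting to $\al \succ 0$ because of the convention $\la n^0 \ra \varphi = \varphi$, extracting $\Box_{\Th_\varphi}\con_n(\Th_\varphi)$ from the interpreted sequent, and the soundness-hence-consistency induction on $\varphi$ needed to make G\"odel's theorem applicable --- are correct fillings-in of what the paper leaves implicit.
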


\begin{proposition}
Let $\psi := \MNF{0}{k} \in {\sf MNF}$: \label{BoundsMNF} 
\begin{enumerate}
\item If $\psi \vdash \la m^\be \ra \top$ for some $\be > 0$, then $m \leq n_k$; \label{firstbound}
\item If $\psi \vdash \la n_i^\be \ra \top$ for $n_i \in {\sf N}\text{-}{\sf mod}(\psi)$, then $\be \leq \al_i$. \label{secondbound}
\end{enumerate}
\end{proposition}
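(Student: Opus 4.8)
The plan is to route everything through the arithmetical soundness of $\TPr$ (Theorem~\ref{soundness}) and then clash with G\"odel's second incompleteness theorem in its $n$-consistency form, namely that an $n$-consistent $\Delta_0$-presented theory extending $\EA^+$ never proves its own $n$-consistency. Throughout I write $\Th_\psi = (\EA^+)_{n_0}^{\al_0} + \cdots + (\EA^+)_{n_k}^{\al_k}$, and note that, as $\EA^+$ is $\Pi^0_1$-presented, every axiom of $\Th_\psi$ is either an axiom of $\EA^+$ or of the form $\con_{n_i}(\cdots)\in\Pi^0_{n_i+1}$; hence $\Th_\psi$ is $\Pi^0_{n_k+1}$-axiomatized and, being true in $\mathbb{N}$, is $p$-consistent for every $p$.

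For Item~\ref{firstbound}, I assume for contradiction that $\psi\vdash\la m^\al \ra\top$ with $\al>0$ while $m>n_k$. Soundness gives $\EA^+\vdash (\EA^+)_m^\al\subseteq\Th_\psi$, and since $\con_m(\EA^+)$ is an axiom of $(\EA^+)_m^1\subseteq(\EA^+)_m^\al$ we get $\Th_\psi\vdash\con_m(\EA^+)$. I would then push the axioms of $\Th_\psi$ inside this statement: reasoning in $\Th_\psi$, for $\pi\in\Pi^0_{m+1}$ with $\Box_{\Th_\psi}\pi$ the formalized deduction theorem yields $\Box_{\EA^+}\big((\sigma_1\wedge\cdots\wedge\sigma_l)\to\pi\big)$ for finitely many axioms $\sigma_j$ of $\Th_\psi$; as each $\sigma_j\in\Pi^0_{n_k+1}\subseteq\Pi^0_m$ this implication is still $\Pi^0_{m+1}$, so $\con_m(\EA^+)$ forces it to be true, and together with the $\sigma_j$ this yields ${\sf Tr}_{m+1}(\pi)$. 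Thus $\Th_\psi\vdash\con_m(\Th_\psi)$, contradicting G\"odel's theorem for the $m$-consistent $\Th_\psi$; hence $m\leq n_k$. This step is the uniform, infinitely-many-axioms version of Item~\ref{t1} of Lemma~\ref{theorem:smallGoesInside}.

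For Item~\ref{secondbound}, I assume $\psi\vdash\la n_i^\be\ra\top$ with $\be>\al_i$, so that $\Th_\psi\vdash\con_{n_i}\big((\EA^+)_{n_i}^{\al_i}\big)$, a $\Pi^0_{n_i+1}$-sentence. I split $\psi=\psi_{<i}\wedge\psi_{\geq i}$; the tail $\psi_{\geq i}=\MNF{i}{k}$ is again in ${\sf MNF}$, so Proposition~\ref{SoundSch1} and Lemma~\ref{SoundSch2} give $\Th_{\psi_{\geq i}}\equiv_{n_i}(\EA^+)_{n_i}^{\al_i}$. Every axiom of $\Th_{\psi_{<i}}$ is $\Pi^0_{n_j+1}$ with $n_j<n_i$, hence $\Sigma^0_{n_i+1}$, so adding $\Th_{\psi_{<i}}$ to both sides preserves $\equiv_{n_i}$ (by the deduction theorem a $\Pi^0_{n_i+1}$ goal over $W+U$ reduces to a $\Pi^0_{n_i+1}$ goal over $U$). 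Therefore $\Th_\psi\equiv_{n_i}U$ with $U:=\Th_{\psi_{<i}}+(\EA^+)_{n_i}^{\al_i}$, whence $U\vdash\con_{n_i}\big((\EA^+)_{n_i}^{\al_i}\big)$. Pushing the low-complexity axioms of $\Th_{\psi_{<i}}$ inside, exactly as in Item~\ref{firstbound}, upgrades this to $U\vdash\con_{n_i}(U)$, again contradicting G\"odel's theorem for the $n_i$-consistent $U$; hence $\be\leq\al_i$.

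The soundness translation and the complexity bookkeeping are routine. The main obstacle is the two ``small-goes-inside'' absorptions, where infinitely many low-complexity axioms must be folded into a single consistency statement: making this precise beyond the single-axiom Item~\ref{t1} needs the formalized deduction theorem together with the provable correctness of the truth predicate ${\sf Tr}$ at the relevant level, and it is precisely the inequalities $m>n_k$ and $n_j<n_i$ that keep every auxiliary implication inside $\Pi^0_{m+1}$, respectively $\Pi^0_{n_i+1}$.
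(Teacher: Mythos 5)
Your overall route --- arithmetical soundness plus G\"odel's second incompleteness theorem --- can in principle be made to work, but both of your ``small-goes-inside'' absorptions, which you yourself identify as the crux, contain a genuine gap. In each item you pass from ``$T$ proves the $n$-consistency of a base'' to ``$T \vdash \con_n(T)$'' for a theory $T$ (namely $\Th_\psi$, resp.\ your $U$) that has \emph{infinitely} many axioms beyond $\EA^+$ as soon as some exponent $\al_j$ is infinite. Since your deduction-theorem argument is carried out \emph{formalized inside} $T$ --- both $\pi$ and the $T$-proof of $\pi$ are quantified variables --- the extracted axioms $\sigma_1,\dots,\sigma_l$ are codes, not fixed sentences, so the step ``together with the $\sigma_j$ this yields ${\sf Tr}_{m+1}(\pi)$'' requires $T$ to prove the \emph{uniform} statement that every non-$\EA^+$ axiom of $T$ is true at the relevant level. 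Provable correctness of the truth predicate is available only for a fixed sentence (or uniformly in a parameter $\gamma$), so this uniform statement reduces to $\forall \gamma \prec \al_j \ \con_{n_j}\big( (\EA^+)_{n_j}^{\gamma} \big)$, which for limit $\al_j$ is equivalent over $\EA^+$ (by the finite-use property, Lemma \ref{tools}) to $\con_{n_j}\big( (\EA^+)_{n_j}^{\al_j} \big)$. But $\Th_\psi$ does not prove that sentence: its unprovability is exactly Item \ref{secondbound} with $\be = \al_j + 1$, i.e.\ the proposition under proof, and it is precisely the sort of self-consistency that G\"odel's theorem forbids; nor does it follow from your contradiction hypothesis, since $\con_m(\EA^+)$ only yields $\con_{n_j}$-statements up to $e^{m-n_j}(\be)$, while $\al_j$ may be vastly larger. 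A theory has each instance as an axiom, never the universal statement, so the intermediate claims $\Th_\psi \vdash \con_m(\Th_\psi)$ and $U \vdash \con_{n_i}(U)$ are unobtainable by this route.

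The repair is to apply compactness \emph{externally} rather than under the box. From $\Th_\psi \vdash \con_m(\EA^+)$, one single proof uses finitely many non-$\EA^+$ axioms; their conjunction $\tau$ is one fixed \emph{true} sentence of complexity $\Pi_{n_k+1} \subseteq \Sigma_{m+1}$, and $\EA^+ + \tau \vdash \con_m(\EA^+)$. Now the fixed-formula absorption, Item \ref{t1} of Lemma \ref{theorem:smallGoesInside}, gives $\EA^+ + \tau \vdash \con_m(\EA^+ + \tau)$, hence $\EA^+ + \tau \vdash \con(\EA^+ + \tau)$, contradicting G\"odel's theorem for the consistent (because true) theory $\EA^+ + \tau$. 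Item \ref{secondbound} is repaired similarly after your (correct) conservation step $\Th_\psi \equiv_{n_i} \Th_{\psi_{<i}} + (\EA^+)_{n_i}^{\al_i}$: a single proof of $\con_{n_i}\big( (\EA^+)_{n_i}^{\al_i} \big)$ there lies in $(\EA^+)_{n_i}^{\gamma+1} + \tau$ for some $\gamma \prec \al_i$ and some fixed true $\tau \in \Pi_{n_i} \subseteq \Sigma_{n_i+1}$; monotonicity (Lemma \ref{theorem:basicFactsOfTuringProgressions}) and Item \ref{t1} then produce a consistent theory proving its own consistency. Note finally how different this is from the paper's proof, which stays entirely inside $\TPr$: it uses the derived principles (PS1)/(PS2) to convert the assumed sequent into one of the form $\la n^1 \ra \psi \vdash \la n^2 \ra \psi$, and only then invokes G\"odel, packaged once and for all in Corollary \ref{NonDerivable}; that design is exactly what sidesteps the infinite-axiomatization issue your formalized absorption runs into.
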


\begin{proof}
For a proof of Item \ref{firstbound}, suppose that $\psi \vdash \la m^\be \ra \top$ for some $m > n_k$. Thus, we reason as follows:
\begin{tabbing}
$\la n_k^1 \ra \psi$ \= $\vdash \la n_k^1 \ra \la m^\be \ra \top$;\\[0.30cm]
\ \> $\vdash \Big( \, \la n_0^{\al_0 + e^{n_k - n0} (1)} \ra \top \ \land \ \ldots \ \land \ \la n_k^{\al_k + 1} \ra \top \, \Big) \ \land \ \la m^\be \ra \top$ by \hyperref[PS1]{(PS1)} ; \\[0.30cm]
\ \> $\vdash \la m^\be \ra \, \Big( \, \la n_0^{\al_0 + 1} \ra \top \ \land \ \ldots \ \land \ \la n_k^{\al_k + 1} \ra \top \, \Big)$; \\[0.30cm]
\ \> $\vdash \la n_0^{\al_0 + e^{m - n0} (\be)} \ra \top \ \land \ \ldots \ \land \ \la n_k^{\al_k + e^{m - n_k} (\be)} \ra \top$; \\[0.30cm]
\ \> $\vdash \la n_k^{e^{m - n_k} (\be)} \ra \psi$ by \hyperref[PS1]{(PS1)}.
\end{tabbing}
This yields a contradiction together with Axiom \ref{coadditive} and Corollary \ref{NonDerivable}. Analogously, for a proof of Item \ref{secondbound}, assume $\psi \vdash \la n_i^\be \ra \top$ for some $\be > \al_i$. For $i, \ 0 \leq i < k$, $\psi \vdash \la n_i^{\al_i + e^{n_{i+1}-n_i} (\al_{i+1})}\ra \top$, and so $\la n_i^1 \ra \psi \vdash \la n_i^{\al_i + e^{n_{i+1}-n_i} (\al_{i+1}) +1 }\ra \top$. We combine this with the following reasoning:
\[
\begin{array}{ll}
\la n_i^1 \ra \psi \\[0.20cm] 
& \hspace{-0.60cm} \vdash \Big( \, \la n_0^{\al_0 + e^{n_i - n_0} (1)} \ra \top \, \land \, \ldots \, \land \, \la n_i^{\al_i + e^{n_{i+1} - n_i} (\al_{i+1})} \ra \top \, \land \, \la n_{i+1}^{\al_{i+1}} \ra \top \, \land \, \ldots \\[0.20cm]
& \land \, \la n_k^{\al_k} \ra \top \, \Big) \, \land \, \la n_i^{\al_i + e^{n_{i+1}-n_i} (\al_{i+1}) +1 } \ra \top \ \text{by \hyperref[PS1]{(PS1)} and \hyperref[PS2]{(PS2)}; } \\[0.30cm]
& \hspace{-0.60cm} \vdash \Big(\, \la n_0^{\al_0 + e^{n_i - n_0} (1)} \ra \top \, \land \, \ldots \, \land \, \la n_i^{\al_i + e^{n_{i+1} - n_i} (\al_{i+1})} \ra \top \, \land \, \la n_{i+1}^{\al_{i+1}} \ra \top \, \land \, \ldots \\[0.20cm]
& \land \, \la n_k^{\al_k} \ra \top \, \Big) \ \land \la n_i^{\al_i + e^{n_{i+1}-n_i} (\al_{i+1}) \cdot 2 } \ra \top;\\[0.30cm]
&\hspace{-0.60cm} \vdash \la n_0^{\al_0 + e^{n_i - n_0} (2)} \ra \top \, \land \, \ldots \, \land \, \la n_i^{\al_i + e^{n_{i+1}-n_i} (\al_{i+1}) \cdot 2 } \ra \top \, \land \, \la n_{i+1}^{\al_{i+1}} \ra \top \ \land \, \ldots \\[0.20cm] 
&\land \, \la n_k^{\al_k} \ra \top \ \text{ by monotonicity}; \\ [0.30cm]
& \hspace{-0.60cm} \vdash \la n_i^2 \ra \psi \ \text{by \hyperref[PS1]{(PS1)} and \hyperref[PS2]{(PS2)}}.
\end{array}
\]
Which again, yields a contradiction. For $i = k$, we follow a similar strategy. First of all notice that 
\[
\la n_k^1 \ra \psi \vdash \la n_0^{\al_0 + e^{n_k - n_0} (1)} \ra \top \, \land \, \ldots \, \land \, \la n_k^{\be + 1} \ra \top,
\] 
and since $\be > \al_k$, $\be + 1  = \al_k + \gamma$ for some $\gamma \geq 2$. Thus, 
\[
\la n_k^1 \ra \psi \vdash \la n_0^{\al_0 + e^{n_k - n_0} (1)} \ra \top \, \land \, \ldots \, \land \, \la n_k^{\al_k + \gamma} \ra \top
\] 
and then we get that 
\[
\la n_k^1 \ra \psi \vdash \la n_0^{\al_0 + e^{n_k - n_0} (\gamma)} \ra \top \, \land \, \ldots \la n_k^{\al_k + \gamma} \ra \top.
\] 
Hence by \hyperref[PS1]{(PS1)}, $\la n_k^1 \ra \psi \vdash \la n_k^\gamma \ra \psi.$ 
\end{proof}

\begin{lemma}
For any $\psi := \MNF{0}{k} \in {\sf MNF}$: \label{monomialSeq}
\begin{enumerate}
\item If $\psi \vdash \la n_i^\delta \ra \top$ for $n_i \in {\sf N}\text{-}{\sf mod}(\psi)$, then $\la n_i^{\al_i} \ra \top \vdash \la n_i^\delta \ra \top$ \label{monomialSeqInMod};
\item If $\psi \vdash \la n^\gamma \ra \top$ for $n \not \in {\sf N}\text{-}{\sf mod}(\psi)$, then $\la n_j^{\al_j} \ra \top \vdash \la n^\gamma \ra \top$ and $\gamma \leq e^{n_j - n} (\al_j)$, where $n_j := \min\{  m \in {\sf N}\text{-}{\sf mod} : m \geq n \}$ \label{monomialSeqNotInMod}.
\end{enumerate}
\end{lemma}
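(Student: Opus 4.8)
The first item is immediate: if $\psi\vdash\la n_i^\delta\ra\top$ for $n_i\in{\sf N}\text{-}{\sf mod}(\psi)$, then Item \ref{secondbound} of Proposition \ref{BoundsMNF} gives $\delta\leq\al_i$, and Axiom \ref{mon} then yields $\la n_i^{\al_i}\ra\top\vdash\la n_i^\delta\ra\top$. For Item \ref{monomialSeqNotInMod} I may assume $\gamma>0$, the case $\gamma=0$ being trivial. First I would check that $n_j$ is well defined and that $n<n_j$: by Item \ref{firstbound} of Proposition \ref{BoundsMNF} we have $n\leq n_k$, so the set $\{m\in{\sf N}\text{-}{\sf mod}(\psi):m\geq n\}$ is nonempty and $n_j$ exists; since $n\notin{\sf N}\text{-}{\sf mod}(\psi)$ we get $n\neq n_j$, hence $n<n_j$. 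Writing $E:=e^{n_j-n}(\al_j)$, the sequent $\la n_j^{\al_j}\ra\top\vdash\la n^\gamma\ra\top$ will follow once the bound $\gamma\leq E$ is known: Axiom \ref{omega} gives $\la n_j^{\al_j}\ra\top\vdash\la n^{E}\ra\top$, Axiom \ref{mon} gives $\la n^{E}\ra\top\vdash\la n^\gamma\ra\top$, and Rule \ref{r:2} closes the argument. Thus everything reduces to proving $\gamma\leq E$.

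To establish $\gamma\leq E$ I would argue by contradiction through the arithmetical interpretation, assuming $\gamma>E$. By soundness (Theorem \ref{soundness}), $\psi\vdash\la n^\gamma\ra\top$ gives $\EA^+\vdash(\EA^+)_n^\gamma\subseteq\T{\psi}$; as $E\prec\gamma$, the sentence $\con_n\big((\EA^+)_n^{E}\big)$ is one of the axioms of $(\EA^+)_n^\gamma$, so $\T{\psi}\vdash\con_n\big((\EA^+)_n^{E}\big)$. Since $E=e^{n_j-n}(\al_j)$, Item \ref{S1} of Proposition \ref{Schmerl} yields $(\EA^+)_n^{E}\equiv_n(\EA^+)_{n_j}^{\al_j}$, whence Item \ref{p2} of Lemma \ref{theorem:basicFactsOfTuringProgressions} gives $\con_n\big((\EA^+)_n^{E}\big)\leftrightarrow\con_n\big((\EA^+)_{n_j}^{\al_j}\big)$. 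Finally, the tail $\MNF{j}{k}$ is itself in ${\sf MNF}$, so Proposition \ref{SoundSch1} together with Lemma \ref{SoundSch2} gives $\T{\MNF{j}{k}}\equiv_{n_j}(\EA^+)_{n_j}^{\al_j}$, hence $\equiv_n$ because $n\leq n_j$, and a second use of Item \ref{p2} of Lemma \ref{theorem:basicFactsOfTuringProgressions} lets me conclude $\T{\psi}\vdash\con_n\big(\T{\MNF{j}{k}}\big)$.

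It remains to turn this into a contradiction. I would split $\T{\psi}=\T{\MNF{j}{k}}+S$, where $S$ gathers the progressions $(\EA^+)_{n_i}^{\al_i}$ coming from the monomials with base $n_i<n$. As $n_i<n$, each axiom of $S$ has complexity $\Pi_{n_i+1}\subseteq\Sigma_{n+1}$, so $S$ is $\Sigma_{n+1}$-axiomatized and is provable in $\T{\psi}$. Absorbing $S$ by means of Item \ref{t1} of Lemma \ref{theorem:smallGoesInside} (with $T=\T{\MNF{j}{k}}$) then gives $\T{\psi}\vdash\con_n\big(\T{\MNF{j}{k}}\big)\rightarrow\con_n(\T{\psi})$, and combined with the previous paragraph this forces $\T{\psi}\vdash\con_n(\T{\psi})$. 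This contradicts Gödel's second incompleteness theorem in its $n$-consistency form (the same fact underlying Corollary \ref{NonDerivable}), since $\T{\psi}$ is a sound, recursively axiomatized extension of $\EA^+$; hence $\gamma\leq E$, as required. I expect the main obstacle to be exactly this last absorption step: Item \ref{t1} of Lemma \ref{theorem:smallGoesInside} is phrased for a single $\Sigma_{n+1}$-sentence, whereas $S$ is the union of infinitely many small progressions, so the claim that absorbing all of $S$ does not raise the $n$-consistency strength of $\T{\MNF{j}{k}}$ must be formalized with care. A convenient route is to first observe, via Item \ref{t5} of Lemma \ref{theorem:smallGoesInside} and the ordinal-monotonicity of Item \ref{noLabel} of Lemma \ref{theorem:basicFactsOfTuringProgressions}, that $S\subseteq(\EA^+)_{n-1}^{\al_0}$, reducing the problem to uniformly absorbing a single $\Pi_n$-axiomatized progression.
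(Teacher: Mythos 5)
Your Item \ref{monomialSeqInMod} and your reduction of Item \ref{monomialSeqNotInMod} to the ordinal bound $\gamma \leq e^{n_j-n}(\al_j)$ coincide with the paper's proof. For the bound itself, however, the paper never leaves the calculus: from $\gamma > e^{n_j-n}(\al_j)$ it derives $\psi \vdash \la n^{e^{n_j-n}(\al_j)\cdot 2}\ra \top$ and then, by the same (PS1)/(PS2)-style manipulations as in Proposition \ref{BoundsMNF}, the sequent $\la n^1 \ra \psi \vdash \la n^2 \ra \psi$, which together with Axiom \ref{coadditive} contradicts Corollary \ref{NonDerivable}; the only arithmetical input is that corollary. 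Your detour through the FTP interpretation is a genuinely different route, and it is correct up to and including the conclusion $\T{\psi} \vdash \con_n\big(\T{\MNF{j}{k}}\big)$: soundness, S\ref{S1}, Propositions \ref{SoundSch1} and \ref{SoundSch2}, and Item \ref{p2} of Lemma \ref{theorem:basicFactsOfTuringProgressions} are all applied legitimately (the tail $\MNF{j}{k}$ is indeed in $\sf MNF$, and $\equiv_{n_j}$ entails $\equiv_n$).

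The gap is exactly where you suspected it, and your proposed repair does not close it. The implication you want provably in $\T{\psi}$, namely $\con_n\big(\T{\MNF{j}{k}}\big)\to\con_n(\T{\psi})$, quantifies over all formalized $\T{\psi}$-proofs, and such a proof may cite arbitrarily many (even nonstandardly many) axioms of $S$. Running the argument behind Item \ref{t1} in that setting forces $\T{\psi}$ to verify, via ${\sf Tr}_n$, that \emph{every} axiom of $S$ is true, i.e.\ $\forall \be \prec \al_i \ \con_{n_i}\big((\EA^+)_{n_i}^{\be}\big)$ for each $i<j$. For limit $\al_i$ this uniform statement is (by Lemma \ref{tools}) equivalent over $\EA^+$ to $\con_{n_i}\big((\EA^+)_{n_i}^{\al_i}\big)$, a sentence $\T{\psi}$ does not prove: that the remaining monomials fall short of the $n_i$-consistency of $(\EA^+)_{n_i}^{\al_i}$ is precisely the ``new information'' content of Item \ref{item:TechnicalMNFclause} of Definition \ref{definition:MNFs}, and its provability would wreck the uniqueness and completeness theorems this lemma feeds into. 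Replacing $S$ by the single progression $(\EA^+)_{n-1}^{\al_0}$ changes nothing: uniformly absorbing an infinitely axiomatized progression is a uniform reflection principle over it, and is just as unavailable. What does work is to de-formalize: $\T{\psi} \vdash \con_n\big(\T{\MNF{j}{k}}\big)$ is a real-world derivability fact, so the proof uses only finitely many axioms $\sigma_1,\ldots,\sigma_m$ of $S$, each a true sentence in $\Pi_n \subseteq \Sigma_{n+1}$. Setting $U:=\T{\MNF{j}{k}}+\sigma_1+\ldots+\sigma_m$, finitely many applications of Item \ref{t1} of Lemma \ref{theorem:smallGoesInside} give $\EA^+ \vdash \con_n\big(\T{\MNF{j}{k}}\big) \land \sigma_1 \land \ldots \land \sigma_m \to \con_n(U)$, whence $U \vdash \con_n(U)$ and so $U \vdash \con(U)$, contradicting G\"odel's second incompleteness theorem for the sound, recursively axiomatized theory $U$. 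With that replacement your argument goes through, though it remains much heavier than the paper's purely syntactic derivation.
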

\begin{proof}
For a proof of Item \ref{monomialSeqInMod}, suppose $\psi \vdash \la n_i^\delta \ra \top$ for some $n_i \in {\sf N}\text{-}{\sf mod}(\psi)$. Thus, by Proposition \ref{BoundsMNF} Item \ref{secondbound}, $\delta \leq \al_i$. Thus, $\la n_i^{\al_i} \ra \top \vdash \la n_i^\delta \ra \top$. For Item \ref{monomialSeqNotInMod}, it suffices to check  $\gamma \leq e^{n_j - n} (\al_j)$ for $n_j := \min\{  m \in {\sf N}\text{-}{\sf mod} : m \geq n \}$. Assume $\psi \vdash \la n^\gamma \ra \top$ for some $\gamma > e^{n_j - n} (\al_j) $. Then, $\psi \vdash \la n^{e^{n_j - n} (\al_j) \cdot 2} \ra \top$, so we can reason as follows:
\[
\begin{array}{ll}
\la n^1 \ra \psi \\[0.20cm] 
&\hspace{-0.60cm} \vdash \la n^{e^{n_j - n} (\al_j) \cdot 2 + 1} \ra \top; \\[0.30cm]

&\hspace{-0.60cm} \vdash \Big( \, \la n_0^{\al_0 + e^{n - n_0} (1)} \ra \top \, \land \, \ldots \, \land \, \la n^{e^{n_j - n} (\al_j) \cdot 2} \ra \top \, \land \, \la n_j^{\al_j} \ra \top \, \land \, \ldots  \, \\[0.20cm]
& \land \ \la n_k^{\al_k} \ra \top \, \Big) \, \land \, \la n^{e^{n_j - n} (\al_j) \cdot 2 + 1} \ra \top ; \\[0.30cm]

&\hspace{-0.60cm} \vdash \Big( \, \la n_0^{\al_0 + e^{n - n_0} (1)} \ra \top \, \land \, \ldots \, \land \, \la n^{e^{n_j - n} (\al_j) \cdot 2} \ra \top \, \land \, \la n_j^{\al_j} \ra \top \, \land \, \ldots  \\[0.20cm]
& \land \ \la n_k^{\al_k} \ra \top \, \Big) \, \land \, \la n^{e^{n_j - n} (\al_j) \cdot 3} \ra \top ; \\[0.30cm]

&\hspace{-0.60cm} \vdash \la n_0^{\al_0 + e^{n - n_0} (2)} \ra \top \, \land \, \ldots \, \land \, \la n^{e^{n_j - n} (\al_j) \cdot 3} \ra \top \, \land \, \la n_j^{\al_j} \ra \top \, \land \, \ldots \, \land \, \la n_k^{\al_k} \ra \top; \\[0.30cm]

&\hspace{-0.60cm} \vdash \la n^2 \ra \psi.
\end{array}
\]

This reason is analogous to the one used in proof of Proposition \ref{BoundsMNF} Item \ref{secondbound}, yielding a contradiction Corollary \ref{NonDerivable}.
\end{proof}

\begin{theorem}
\label{MNFUNIQ}
Given $\psi_1 , \psi_2 \in {\sf MNF}$, if $\psi_1 \equiv \psi_2$ then $\psi_1 = \psi_2$.
\end{theorem}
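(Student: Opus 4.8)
The plan is to reconstruct, purely from the $\TPr$-provable consequences of a monomial normal form, both its set of bases and the exponent sitting at each base; since a $\sf MNF$ is as a syntactic object just the increasing conjunction of its monomials, this will force $\psi_1 = \psi_2$. For a $\sf MNF$ $\psi$ and a base $m < \omega$, I would let $\al_\psi(m)$ be the largest $\be$ with $\psi \vdash \la m^\be \ra \top$, putting $\al_\psi(m) := 0$ if no positive such $\be$ exists; this maximum exists because, by Axiom \ref{mon} and cut, the derivable exponents at a fixed base form a downward-closed set, bounded by Proposition \ref{BoundsMNF} and Lemma \ref{monomialSeq}. The first observation is that $\al_\psi$ is an invariant of the $\equiv$-class: if $\psi_1 \equiv \psi_2$ then by Rule \ref{r:2} the two formulas prove exactly the same monomials, so $\al_{\psi_1} = \al_{\psi_2}$. (We may assume all exponents of a $\sf MNF$ are positive, since $\la n^0 \ra \top$ is just $\top$ by the stated convention.)

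Next I would compute $\al_\psi$ explicitly for $\psi := \MNF{0}{k}$. If $m = n_i \in {\sf N\text{-}mod}(\psi)$, conjunction elimination (Axiom \ref{conjel}) gives $\psi \vdash \la n_i^{\al_i} \ra \top$, and item \ref{secondbound} of Proposition \ref{BoundsMNF} supplies the matching upper bound, so $\al_\psi(n_i) = \al_i$. If $m \notin {\sf N\text{-}mod}(\psi)$ but $m < n_k$, set $n_j := \min\{ n_i : n_i \geq m \}$; then $\psi \vdash \la n_j^{\al_j} \ra \top \vdash \la m^{e^{n_j - m}(\al_j)} \ra \top$ by Axiom \ref{omega}, while item \ref{monomialSeqNotInMod} of Lemma \ref{monomialSeq} bounds the exponent above by the same ordinal, so $\al_\psi(m) = e^{n_j - m}(\al_j)$. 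Finally $\al_\psi(m) = 0$ whenever $m > n_k$, by item \ref{firstbound} of Proposition \ref{BoundsMNF}.

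The crucial step is to read off ${\sf N\text{-}mod}(\psi)$ from $\al_\psi$ alone. Using the values just computed together with the composition law $e^{a+b} = e^a \circ e^b$ for hyper-exponentials, one verifies that for every base $m \notin {\sf N\text{-}mod}(\psi)$ the \emph{smooth} relation $\al_\psi(m) = e(\al_\psi(m+1))$ holds (including trivially for $m > n_k$, where $e(0) = 0$). At a genuine base $m = n_i$ with $i < k$, by contrast, the technical clause \ref{item:TechnicalMNFclause} of Definition \ref{definition:MNFs} forces $\al_i = e^{n_{i+1} - n_i}(\al_{i+1}) \cdot (2 + \be)$ for some $\be$, whereas $e(\al_\psi(n_i + 1)) = e^{n_{i+1} - n_i}(\al_{i+1})$; hence $\al_\psi(n_i) = e(\al_\psi(n_i+1)) \cdot (2 + \be) > e(\al_\psi(n_i+1))$, and for the top base one has $\al_\psi(n_k) > 0 = e(\al_\psi(n_k+1))$. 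This yields the characterization
\[
m \in {\sf N\text{-}mod}(\psi) \iff \al_\psi(m) > e\big(\al_\psi(m+1)\big),
\]
which expresses ${\sf N\text{-}mod}(\psi)$ entirely in terms of the invariant $\al_\psi$.

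To conclude, from $\psi_1 \equiv \psi_2$ I obtain $\al_{\psi_1} = \al_{\psi_2}$, hence ${\sf N\text{-}mod}(\psi_1) = {\sf N\text{-}mod}(\psi_2)$ by the displayed equivalence, and $\al_{\psi_1}(m) = \al_{\psi_2}(m)$ for each $m$ in this common set of bases. Since a $\sf MNF$ is precisely the conjunction of the monomials $\la m^{\al_\psi(m)} \ra \top$ with $m \in {\sf N\text{-}mod}(\psi)$ arranged by increasing base, the two formulas are literally identical, so $\psi_1 = \psi_2$. I expect the main obstacle to be the middle step --- verifying the smooth relation for the gap-bases and the strict jump at genuine bases --- because this is exactly where the precise admissible form $e^{n_{i+1} - n_i}(\al_{i+1}) \cdot (2 + \be)$ of the exponents, rather than their mere positivity, is indispensable, and where the bookkeeping with the composition law for $e$ must be carried out carefully.
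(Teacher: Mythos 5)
Your proof is correct, and it takes a genuinely different route from the paper's, even though both arguments draw on the same auxiliary results (Proposition \ref{BoundsMNF} and Lemma \ref{monomialSeq}). The paper matches monomials pairwise, by induction from the largest base downward: Items \ref{firstbound} and \ref{secondbound} of Proposition \ref{BoundsMNF} force the top monomials of $\psi_1$ and $\psi_2$ to coincide; at each later step a divergence of bases is refuted because the bound from Item \ref{monomialSeqNotInMod} of Lemma \ref{monomialSeq} contradicts clause (c) of Item \ref{item:TechnicalMNFclause} in Definition \ref{definition:MNFs}; and the possibility that one normal form is strictly shorter than the other ($j<k$) requires a separate closing argument via Theorem \ref{MNFINC} and Corollary \ref{NonDerivable}. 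You instead build a complete $\equiv$-invariant: the map $m \mapsto \al_\psi(m)$ of maximal derivable exponents, computed in closed form ($\al_i$ at bases, $e^{n_j-m}(\al_j)$ at gaps below $n_k$, $0$ above $n_k$), together with the observation that the genuine bases are exactly the jump positions where $\al_\psi(m) > e(\al_\psi(m+1))$ --- this is where clause (c) enters for you, doing the same work as in the paper's contradiction step. Your route buys uniformity (normal forms of different lengths need no separate treatment, since the invariant is defined on all of $\omega$, so the paper's final appeal to Theorem \ref{MNFINC} and Corollary \ref{NonDerivable} disappears) and a slightly stronger conclusion, namely an explicit recipe reconstructing a formula in ${\sf MNF}$ from the set of monomials it derives; the price is the closed-form computation at every gap position, which the paper avoids by inspecting only positions where the two normal forms disagree. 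One presentational point: downward closure plus an upper bound does not by itself yield a \emph{largest} derivable exponent, so $\al_\psi(m)$ should officially be introduced as a supremum; that it is attained only follows from your second paragraph, where each upper bound is itself shown to be derivable. Your explicitly flagged assumption that all exponents are positive is indeed needed (it guarantees $e^{n_{i+1}-n_i}(\al_{i+1})>0$, hence the strict jump at genuine bases), and it is legitimate under the paper's convention that $\la n^0 \ra \varphi$ is $\varphi$.
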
 

\begin{proof}
Right-to-left direction is trivial. For the left-to-right direction, assume that $\psi_1 \equiv \psi_2$, where $\psi_1$ and $\psi_2$ are of the form  $\MNF{0}{k}$ and $\la m_0^{\be_0} \ra \top \, \land \, \ldots \, \land \, \la m_j^{\be_j} \ra \top$ respectively. W.l.o.g assume $j \leq k$. We show by induction on $j-i$ that $\la m_{j-i}^{\be_{j-i}} \ra \top = \la n_{k-i}^{\al_{k-i}} \ra \top $. For the base case, by a combination of Items \ref{firstbound} and \ref{secondbound} of Proposition \ref{BoundsMNF}, together with the assumption $\psi_1 \equiv \psi_2$, we have that $\la n_k^{\al_k} \ra \top = \la m_j^{\be_j} \ra \top$. Assume as IH that $\la m_{j-i}^{\be_{j-i}} \ra \top = \la n_{k-i}^{\al_{k-i}} \ra \top $. Again, if $m_{j -(i+1)} = n_{k - (i + 1)}$, by Proposition \ref{BoundsMNF} Item \ref{secondbound}, we are done. Thus, assume w.l.o.g that $m_{j -(i+1)} < n_{k - (i + 1)}$. Thus, by Lemma \ref{monomialSeq} Item \ref{monomialSeqNotInMod} we have that $\la m_{j -{i}}^{\be_{j-i}} \ra \top \vdash \la n_{k - (i + 1)}^{\al_{k - (i + 1)}} \ra \top$ \footnote{Note that since $m_{j - (i+1)} < n_{k - (i + 1)} < m_{j-i}$, and so $n_{k - (i + 1)} \not \in {\sf N}\text{-}{\sf mod}(\psi_2)$}, but by IH $\al_{k - (i + 1)} < e^{m_{k-i} - n_{k - (i+1)}} (\be_{j-i}) = e^{n_{k-i} - n_{k - (i+1)}} (\al_{k - i})$, contradicting the definition of ${\sf MNF}$. Hence, we have that for $i \leq j$, $\la m_{j-i}^{\be_{j-i}} \ra \top = \la n_{k-i}^{\al_{k-i}} \ra \top$. Moreover, if $j = k$, then clearly $\psi_1 = \psi_2$. Suppose $j < k$, since $\psi_1 \equiv \psi_2$, then $\psi_2 \vdash \psi_1$ and by Theorem \ref{MNFINC} we know that there is $A \in \WO$ such that $A \equiv \psi_2$ and $A \vdash \la n_0^\delta \rangle A$ for some $\delta > 0$, against Corollary \ref{NonDerivable}.
\end{proof}

\begin{lemma}
Let $\varphi \in \mathbb{F}_{< n}$ for some $n$, and $\psi \in {\sf MNF}$ such that $\varphi \equiv \psi$. Then $\psi \in \mathbb{F}_{<n}$.
\end{lemma}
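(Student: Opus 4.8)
The plan is to reduce the statement to a purely combinatorial fact about which modality bases survive normalization, and then to appeal to the uniqueness of monomial normal forms. First I would record the trivial but needed observation that $\mathbb{F}_{<n}$ is exactly the set of formulas all of whose modalities have base below $n$; unwinding the defining clauses of $\mathbb{F}_{<n}$ against those of ${\sf N}\text{-}{\sf mod}$ gives $\chi \in \mathbb{F}_{<n}$ iff ${\sf N}\text{-}{\sf mod}(\chi) \subseteq \{\,m : m<n\,\}$. So it suffices to control the base set of the normal form.

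The core of the argument is the claim that normalization never manufactures a new base: for every $\chi \in \FLE$ there is an ${\sf MNF}$ $\psi_\chi$ with $\chi \equiv \psi_\chi$ and ${\sf N}\text{-}{\sf mod}(\psi_\chi) \subseteq {\sf N}\text{-}{\sf mod}(\chi)$. I would prove this by induction on $\chi$, following step by step the induction establishing Theorem \ref{MNFTHEOREM}. The case $\chi = \top$ is immediate. For $\chi = \chi_1 \wedge \chi_2$ the equivalent ${\sf MNF}$ is produced, via Corollary \ref{theorem:conjunctionOfMonomialsHasMNF}, by iterating Lemma \ref{conNF}; an inspection of that lemma shows its output $\psi'$ satisfies ${\sf N}\text{-}{\sf mod}(\psi') \subseteq \{n\} \cup {\sf N}\text{-}{\sf mod}(\psi)$, so the iterated construction stays within the union ${\sf N}\text{-}{\sf mod}(\chi_1) \cup {\sf N}\text{-}{\sf mod}(\chi_2) = {\sf N}\text{-}{\sf mod}(\chi)$. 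For $\chi = \la m^\be \ra \chi'$, the induction hypothesis yields $\psi_{\chi'} \in {\sf MNF}$ with ${\sf N}\text{-}{\sf mod}(\psi_{\chi'}) \subseteq {\sf N}\text{-}{\sf mod}(\chi')$, and putting the modality up front is handled by Theorem \ref{theorem:MNFsClosedUnderOrdinalModalities}; Proposition \ref{modNF} records the base set of the result explicitly, giving in every one of its three cases ${\sf N}\text{-}{\sf mod}(\psi_\chi) \subseteq \{m\} \cup {\sf N}\text{-}{\sf mod}(\psi_{\chi'}) \subseteq \{m\} \cup {\sf N}\text{-}{\sf mod}(\chi') = {\sf N}\text{-}{\sf mod}(\chi)$. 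This closes the induction.

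With the claim established the lemma follows at once. From $\varphi \in \mathbb{F}_{<n}$ we get ${\sf N}\text{-}{\sf mod}(\varphi) \subseteq \{\,m : m<n\,\}$, whence ${\sf N}\text{-}{\sf mod}(\psi_\varphi) \subseteq \{\,m : m<n\,\}$, i.e. $\psi_\varphi \in \mathbb{F}_{<n}$. Since $\psi_\varphi \equiv \varphi \equiv \psi$ and both $\psi_\varphi$ and $\psi$ are in ${\sf MNF}$, the uniqueness theorem, Theorem \ref{MNFUNIQ}, forces $\psi_\varphi = \psi$, so $\psi \in \mathbb{F}_{<n}$, as desired. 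I expect the only genuine work to be the careful base-bookkeeping at each normalization step, and this is largely pre-packaged: Proposition \ref{modNF} was stated precisely so as to expose the base set of the output (and if one prefers to stay within the main line one can instead just reinspect the proof of Theorem \ref{theorem:MNFsClosedUnderOrdinalModalities}, noting that Proposition \ref{theorem:PseudoSchmerlOne}, Proposition \ref{theorem:smallModalityOverEqualMNF} and Axiom \ref{MS1} only recombine already-present bases). The one point needing attention is the degenerate configurations that fall outside the hypotheses $k\geq 1$ and $0<\al$ of Proposition \ref{modNF} — a single monomial or $\top$ in the tail, or a zero exponent — but these reduce to the two-modality lemma, to co-additivity (Axiom \ref{coadditive}), or to trivial simplifications, none of which introduces a fresh base.
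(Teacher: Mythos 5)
Your proof is correct, and every result you invoke (Theorem~\ref{MNFTHEOREM}, Lemma~\ref{conNF}, Theorem~\ref{theorem:MNFsClosedUnderOrdinalModalities}, Proposition~\ref{modNF}, Theorem~\ref{MNFUNIQ}) appears in the paper \emph{before} this lemma, so there is no circularity. The comparison with the paper is necessarily lopsided: the paper's entire proof is the phrase ``Easy induction on $\psi$,'' i.e.\ a structural induction on the given ${\sf MNF}$, with no indication of how the induction hypothesis $\varphi \in \mathbb{F}_{<n}$ is brought to bear. Your route differs in two respects. First, you induct on $\varphi$ rather than on $\psi$, establishing the stronger invariant that the normalization procedure of Section~5 never manufactures a modality base absent from the input; this base-bookkeeping is where the real content of the lemma lies, and it is precisely what the paper's one-liner leaves unstated (any induction on $\psi$ would still need some such fact at the conjunction stage, since conjuncts in $\mathbb{F}_{<n}$ do not obviously keep their joint consequences inside $\mathbb{F}_{<n}$ without it). Second, you transfer the invariant to the \emph{given} $\psi$ via the uniqueness theorem, Theorem~\ref{MNFUNIQ}. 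That step is legitimate --- the lemma sits immediately after Theorem~\ref{MNFUNIQ} --- but is slightly heavier than needed: once you have an ${\sf MNF}$ $\psi_\varphi$ of $\varphi$ with all bases below $n$, you could instead observe that if the given $\psi$ contained a monomial $\la n_k^{\al_k} \ra \top$ with $n_k \geq n$ and $\al_k > 0$, then $\psi_\varphi \vdash \la n_k^{\al_k} \ra \top$ would contradict Item~\ref{firstbound} of Proposition~\ref{BoundsMNF}; both tools ultimately rest on Corollary~\ref{NonDerivable}, so nothing is saved in terms of dependence on arithmetical soundness, but the latter keeps the argument closer to a direct analysis of $\psi$. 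In short, your proposal is a complete and sound proof of a statement the paper only gestures at, at the mild cost of routing through uniqueness and through the fine-tuning Proposition~\ref{modNF}, neither of which the paper's sketch appears to have intended.
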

\begin{proof}
Easy induction on $\psi$.
\end{proof}

\begin{proposition}
Given $\psi := \MNF{0}{k} \in {\sf MNF}$, provably in TSC: \label{Conservativity}
$$\MNF{0}{k} \equiv_{n} \bigwedge_{0 \leq i \leq j} \la n_i^{\al_i} \ra \top$$
where $n_j := \min\{ m \in {\sf N}\text{-}{\sf mod} : m \geq n \}$.
\end{proposition}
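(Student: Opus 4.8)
The plan is to reduce the claimed $\equiv_n$ to a comparison of the monomial consequences of base at most $n$ of the two formulas, and then to read off those consequences from the characterizations already established. First I would record two structural facts. Write $\chi := \bigwedge_{0 \leq i \leq j}\la n_i^{\al_i}\ra\top$. Since $n_0 < \ldots < n_{j-1} < n \leq n_j < \ldots < n_k$, the conjuncts of $\chi$ form an initial segment of $\psi$; and because the conditions of Definition \ref{definition:MNFs} witnessing $\psi \in {\sf MNF}$ for the indices $0 \leq i < j$ are exactly those needed to witness $\chi \in {\sf MNF}$ (now with $\la n_j^{\al_j}\ra\top$ as the seed monomial), we obtain $\chi \in {\sf MNF}$. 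Moreover $\psi \vdash \chi$ by conjunction elimination and introduction, so every consequence of $\chi$ is a consequence of $\psi$, which already yields one inclusion of $\equiv_n$.

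Next I would reduce everything to single monomials. By Theorem \ref{MNFTHEOREM} and the lemma immediately preceding this proposition, any $\chi' \in \mathbb{F}_{<n+1}$ is equivalent to an ${\sf MNF}$ lying in $\mathbb{F}_{<n+1}$, that is, to a conjunction of monomials $\la m^\delta\ra\top$ with $m \leq n$; and $\psi \vdash \chi'$ holds iff $\psi$ proves each of these conjuncts. Hence $\psi \equiv_n \chi$ reduces to the statement that for every monomial $\la m^\delta\ra\top$ with $m \leq n$ we have $\psi \vdash \la m^\delta\ra\top$ iff $\chi \vdash \la m^\delta\ra\top$. To decide such sequents I would use the exact criterion for an ${\sf MNF}$ to prove a single monomial: if $m = n_i \in {\sf N}\text{-}{\sf mod}(\psi)$, then Axiom \ref{mon} with conjunction elimination gives $\psi \vdash \la n_i^\delta\ra\top$ for $\delta \leq \al_i$, and Proposition \ref{BoundsMNF}(\ref{secondbound}) gives the converse, so $\psi \vdash \la m^\delta\ra\top$ iff $\delta \leq \al_i$; if $m \notin {\sf N}\text{-}{\sf mod}(\psi)$, put $p := \min\{m' \in {\sf N}\text{-}{\sf mod}(\psi) : m' \geq m\}$, so that Axiom \ref{omega} yields $\la p^{\al_p}\ra\top \vdash \la m^{e^{p-m}(\al_p)}\ra\top$ and hence, with Axiom \ref{mon}, $\psi \vdash \la m^\delta\ra\top$ for all $\delta \leq e^{p-m}(\al_p)$, while Lemma \ref{monomialSeq}(\ref{monomialSeqNotInMod}) supplies the matching upper bound.

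Finally I would compare these criteria for $m \leq n$, for both $\psi$ and $\chi$. The key observation is that when $m \leq n$ the relevant base is always at most $n_j$: if $m \in {\sf N}\text{-}{\sf mod}(\psi)$ then $m = n_i$ with $i \leq j$, and otherwise $p = \min\{m' \in {\sf N}\text{-}{\sf mod}(\psi) : m' \geq m\} \leq n_j$, since $n_j \geq n \geq m$ is itself a candidate. In either case the data deciding whether $\psi \vdash \la m^\delta\ra\top$ (the exponent $\al_i$, or else the pair $p,\al_p$) lie within the prefix $\{n_0,\ldots,n_j\} = {\sf N}\text{-}{\sf mod}(\chi)$, and the minimum defining $p$ is unchanged when recomputed inside $\chi$, because no base of $\psi$ exceeding $n_j$ can be the least base $\geq m$. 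Applying the same criterion to the ${\sf MNF}$ $\chi$ therefore returns the identical answer, giving $\psi \vdash \la m^\delta\ra\top$ iff $\chi \vdash \la m^\delta\ra\top$ and so $\psi \equiv_n \chi$. The main obstacle is precisely this invariance step: checking that discarding the high monomials $\la n_{j+1}^{\al_{j+1}}\ra\top,\ldots,\la n_k^{\al_k}\ra\top$ leaves every base-$\leq n$ monomial consequence intact, which is where the sharp bounds of Proposition \ref{BoundsMNF} and Lemma \ref{monomialSeq}, together with the fact that $\chi$ is again an ${\sf MNF}$, are all needed. (If $n > n_k$ the set defining $n_j$ is empty; then $\chi = \psi$ and the claim is trivial, so one assumes $n \leq n_k$.)
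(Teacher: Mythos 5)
Your proposal is correct and takes essentially the same approach as the paper: both reduce an arbitrary consequence $\chi \in \mathbb{F}_{<n+1}$ to its equivalent ${\sf MNF}$ (a conjunction of monomials of base at most $n$, via Theorem \ref{MNFTHEOREM} and the lemma preceding the proposition) and then invoke Lemma \ref{monomialSeq} together with Proposition \ref{BoundsMNF} to conclude that every such monomial consequence is already derivable from the prefix $\bigwedge_{0 \leq i \leq j} \la n_i^{\al_i} \ra \top$. Your reformulation as matching exact derivability criteria for $\psi$ and for the prefix (plus the observation that the prefix is itself an ${\sf MNF}$) is a mild repackaging of the paper's one-directional transfer argument rather than a different route.
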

\begin{proof}
Left-to-right sequent is easy derivable. For right-to-left sequent, assume $\psi \vdash \chi$ for some $\chi \in \mathbb{F}_{<n+1}$. We prove our result by induction on $\chi$. Being base and conjunctive cases straightforward, we check the case $\chi := \la m^\be \ra \chi'$. Thus, there is $\psi' \in {\sf MNF}$ such that $\psi' \equiv \chi$. Moreover, by previous Lemma, since $\chi \in \mathbb{F}_{< n + 1}$, then $\psi' \in \mathbb{F}_{< n + 1}$. Let $\psi' := \la m_0^{\be_0} \ra \top \ \land \ \ldots \ \land \ \la m_l^{\be_l} \ra \top$ where $m_i \leq n \leq n_j$. By assumption $\psi \vdash \la m_0^{\be_0} \ra \top \ \land \ \ldots \ \land \ \la m_l^{\be_l} \ra \top$. Notice that for each $\la m_i^{\be_i} \ra \top$, either $m_i \in {\sf N}\text{-}{\sf mod}(\psi)$, and thus by Lemma \ref{monomialSeq} Item \ref{monomialSeqInMod}, $\la m_i^{\al_i} \ra \top$ occurs in $\psi$ and $\la n_{i'}^{\al_{i'}} \ra \top \vdash \la m_i^{\be_i} \ra \top$, or $m_i \not \in {\sf N}\text{-}{\sf mod}(\psi)$ and by Lemma \ref{monomialSeq} Item \ref{monomialSeqNotInMod} there is $\la n_{i'}^{\al_{i'}} \ra \top$ occurring in $\psi$ with $i' \leq j$, such that $\la n_{i'}^{\al_{i'}} \ra \top \vdash \la m_i^{\be_i} \ra \top$. Hence,
$$\bigwedge_{0 \leq i \leq j} \la n_i^{\al_i} \ra \top \vdash \psi'$$
that is,
$$\bigwedge_{0 \leq i \leq j} \la n_i^{\al_i} \ra \top \vdash \chi .$$
\end{proof}

\subsection{Arithmetical Completeness}

With the help of some of the results established in the previous subsection, we can state the following characterization of the derivability between formulas in ${\sf MNF}$. Furthermore, we can provide some derivable principles in $\TPr$, that together with the arithmetical soundness, will be useful in order to prove the arithmetical completeness.

\begin{lemma} \label{boundsImplyGodel}
Let $\psi := \MNF{0}{k} \in {\sf MNF}$. The following is derivable in $\TPr$:
\begin{enumerate}
\item If $\psi \vdash \la m^\be \ra \top$, $m > n_k$ and $\be > 0$, then $\la n_k^1 \ra \psi \vdash \la n_k^{e^{m - n_k} (\be)} \ra \psi$; \label{BIG1}
\item If $\psi \vdash \la m^\be \ra \top$, $\be > e^{n_i - m} (\al_i)$ and $n_i := \min \{ n \in {\sf N}\text{-}{\sf mod}(\psi_0) : n \geq m  \}$, then $\la n_i^1 \ra \psi \vdash \la n_i^2 \ra \psi$. \label{BIG2}
\end{enumerate}
\end{lemma}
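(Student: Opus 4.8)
The plan is to observe that the two principles are nothing but the derivation chains that already appear---as the contradiction-generating cores---inside the proofs of Proposition~\ref{BoundsMNF} and Lemma~\ref{monomialSeq}, now isolated as positive $\TPr$-derivabilities. Both items are pure syntactic derivations; the only ingredients are the Schmerl scheme (Axiom~\ref{MS1}), its corollaries \hyperref[PS1]{(PS1)} and \hyperref[PS2]{(PS2)}, co-additivity (Axiom~\ref{coadditive}), monotonicity (Axiom~\ref{mon}), Rule~\ref{r:3}, and the hyper-exponential identities $e^{a}\circ e^{b}=e^{a+b}$ and $1+e^{c}(\delta)=e^{c}(\delta)$ for $c\geq 1$, $\delta>0$ (the latter since such an $e^{c}(\delta)$ is a limit). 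I would \emph{not} invoke Corollary~\ref{NonDerivable} anywhere: that corollary is what later turns these derivabilities into contradictions, but here the derivabilities themselves are the end product.

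For Item~\ref{BIG1} I would reproduce the five-line chain from the proof of Proposition~\ref{BoundsMNF}, Item~\ref{firstbound}. Starting from $\la n_k^1 \ra \psi$, I first push the head modality inside by \hyperref[PS1]{(PS1)} and co-additivity, rewriting $\la n_k^1 \ra \psi$ as a conjunction of monomials; I then adjoin the monomial $\la m^\be \ra \top$, available because $\la n_k^1 \ra \psi \vdash \psi \vdash \la m^\be \ra \top$ by Proposition~\ref{prop}, Item~\ref{prop3} and the hypothesis. As $m>n_k$ exceeds every base occurring and all exponents are now successors, Proposition~\ref{prop}, Item~\ref{prop6} lets me fold $\la m^\be \ra$ back to the front; \hyperref[PS1]{(PS1)} then distributes it over the smaller modalities, with $1+e^{m-n_j}(\be)=e^{m-n_j}(\be)$ cleaning up the exponents. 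A final application of \hyperref[PS1]{(PS1)} together with $e^{n_k-n_j}\circ e^{m-n_k}=e^{m-n_j}$ recognises the result as $\la n_k^{e^{m-n_k}(\be)} \ra \psi$, which is the claim.

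For Item~\ref{BIG2} I would split on whether $m$ is itself a base. If $m=n_i$, then $\be>e^{n_i-m}(\al_i)=\al_i$ and the statement is precisely the chain in the proof of Proposition~\ref{BoundsMNF}, Item~\ref{secondbound}, which I reproduce: for $i<k$ it delivers $\la n_i^1 \ra \psi \vdash \la n_i^2 \ra \psi$ directly, while for $i=k$ it delivers $\la n_k^1 \ra \psi \vdash \la n_k^\gamma \ra \psi$ with $\gamma\geq 2$, whence $\la n_k^1 \ra \psi \vdash \la n_k^2 \ra \psi$ by monotonicity. If $m<n_i$, I run the analogue of the chain in the proof of Lemma~\ref{monomialSeq}, Item~\ref{monomialSeqNotInMod}: after rewriting $\la n_i^1 \ra \psi$ into monomial form by \hyperref[PS1]{(PS1)} and \hyperref[PS2]{(PS2)}, I adjoin the derived monomial obtained from $\la n_i^1 \ra \la m^\be \ra \top \equiv \la m^{\be+e^{n_i-m}(1)} \ra \top \land \la n_i^1 \ra \top$ and use this surplus at base $m$ to bump the exponent at $n_i$, collapsing back to $\la n_i^2 \ra \psi$ via \hyperref[PS1]{(PS1)}, \hyperref[PS2]{(PS2)} and monotonicity.

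The routine part is the ordinal bookkeeping with hyper-exponentials, but the genuine obstacle is the sub-case $m<n_i$ of Item~\ref{BIG2}: there one must produce the \emph{out-front} modality $\la n_i \ra$ (a base of $\psi$) rather than $\la m \ra$, whereas the surplus driving the argument lives at the non-base $m$, which does not even occur in the monomial form of $\la n_i^1 \ra \psi$. The crux is therefore to feed that surplus in through the hypothesis and migrate it from base $m$ up to base $n_i$, keeping everything inside a single conjunction that \hyperref[PS1]{(PS1)} and \hyperref[PS2]{(PS2)} can fold back into $\la n_i^2 \ra \psi$; the boundary case $i=k$, where there is no $n_{i+1}$ and one only reaches $\la n_k^\gamma \ra \psi$ with $\gamma\geq 2$, is then absorbed by a final monotonicity step.
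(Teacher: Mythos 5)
Your reading of the lemma is the right one, and two of the three cases are handled exactly as the paper (implicitly) intends: Item \ref{BIG1} is the five-line chain from the proof of Proposition \ref{BoundsMNF}, Item \ref{firstbound} (modulo first weakening the limit exponents $\al_j + e^{n_k-n_j}(1)$ to $\al_j+1$ before invoking Proposition \ref{prop}, Item \ref{prop6} --- a step the paper's chain makes explicit and your phrase ``all exponents are now successors'' glosses over), and the sub-case $m=n_i$ of Item \ref{BIG2} is the chain from Proposition \ref{BoundsMNF}, Item \ref{secondbound}, with your monotonicity step correctly absorbing the boundary case $i=k$. You are also right that a vacuous appeal to Corollary \ref{NonDerivable} would be useless here, since Theorem \ref{Compltnss} needs to replay these chains under the FTP interpretation.

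The gap is the sub-case $m<n_i$ of Item \ref{BIG2}, the very step you call the crux: migrating the surplus from base $m$ up to base $n_i$ is not merely difficult, it is impossible in $\TPr$. Expanding with \hyperref[PS1]{(PS1)} and \hyperref[PS2]{(PS2)}, the $n_i$-conjunct of $\la n_i^1 \ra \psi$ is $\la n_i^{\al_i+e^{n_{i+1}-n_i}(\al_{i+1})} \ra \top$ (resp.\ $\la n_k^{\al_k+1} \ra \top$ when $i=k$), while that of $\la n_i^2 \ra \psi$ is $\la n_i^{\al_i+e^{n_{i+1}-n_i}(\al_{i+1})\cdot 2} \ra \top$ (resp.\ $\la n_k^{\al_k+2} \ra \top$); so your derivation would have to strictly increase what is derivable at base $n_i$ using only extra conjuncts of base $m<n_i$. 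No tool of the calculus does this: Axioms \ref{omega} and \ref{MS1}, \hyperref[PS1]{(PS1)}, \hyperref[PS2]{(PS2)} and Rule \ref{r:4} all transfer strength from higher bases to lower ones, never upward. To see that no combination evades this, truncate the interpretation: let every modality of base $\leq m$ denote the identity (so $\la j^\gamma \ra \chi$ denotes $\Th_\chi$ for $j \leq m$) and keep the FTP reading above $m$. Every axiom and rule of $\TPr$ remains sound (instances whose bases all exceed $m$ are literally standard FTP instances, and the remaining instances collapse to trivial inclusions), and the hypothesis $\psi \vdash \la m^\be \ra \top$ becomes the trivially valid inclusion $\EA^+ \subseteq \Th_\psi$; but the conclusion would then assert that a sound, consistent theory of the form $(U)^1_{n_i}$ proves its own $n_i$-consistency, where $U$ is the truncated theory of $\psi$, contradicting G\"odel's second incompleteness theorem. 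Hence no $\TPr$-derivation from the hypothesis can reach the conclusion at base $n_i$.

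What the chain of Lemma \ref{monomialSeq}, Item \ref{monomialSeqNotInMod} actually yields in this sub-case is $\la m^1 \ra \psi \vdash \la m^2 \ra \psi$: the G\"odel-contradicting pair lives at base $m$, not at base $n_i$, and that version serves the completeness argument equally well. As literally stated, Item \ref{BIG2} with $m \notin {\sf N}\text{-}{\sf mod}(\psi)$ holds only vacuously (by Lemma \ref{monomialSeq} its hypothesis is never derivable) --- precisely the reading you, rightly for the paper's purposes, set out to avoid. So either prove this sub-case with the conclusion read at base $m$, or fall back on the vacuous argument; the upward migration you propose is not available.
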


\begin{proposition} \label{CharacterizatioMNFDeriv}
For any $\psi_0, \ \psi_1 \in {\sf MNF}$, where $\psi_0 := \MNF{0}{k}$ and $\psi_1 := \la m_0^{\be_0} \ra \top \ \land \ \ldots \ \land \ \la m_j^{\be_j} \ra \top$. $\psi_0 \vdash \psi_1$ iff the following holds:
\begin{enumerate}
\item $m_j \leq n_k$;
\item For any $m_l \in {\sf N}\text{-}{\sf mod}(\psi_1)$ and $n_i := \min \{ n \in {\sf N}\text{-}{\sf mod}(\psi_0) : n \geq m_l  \}$, $\be_l \leq e^{n_i - m_l} (\al_i)$.
\end{enumerate}

\end{proposition}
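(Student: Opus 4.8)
The plan is to prove the two implications separately, in each case reducing the statement about the full normal forms to statements about individual monomials. Since $\psi_1 := \la m_0^{\be_0}\ra\top \land \ldots \land \la m_j^{\be_j}\ra\top$, repeated use of Rule \ref{r:1} together with conjunction elimination (Axiom \ref{conjel}) shows that $\psi_0 \vdash \psi_1$ is equivalent to $\psi_0 \vdash \la m_l^{\be_l}\ra\top$ holding for every $l \leq j$. I will therefore work throughout with a single monomial on the right. As is implicit in the definition of {\sf MNF}, I assume all occurring exponents are positive; a conjunct $\la m_l^0\ra\top$ is just $\top$, is derivable trivially, and influences neither of the two stated conditions.

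For the left-to-right direction I would assume $\psi_0 \vdash \psi_1$ and read off the two conditions from the bounds already established. Condition (1) follows because $\psi_0 \vdash \la m_j^{\be_j}\ra\top$ with $\be_j > 0$ and $m_j = \max {\sf N}\text{-}{\sf mod}(\psi_1)$, so Proposition \ref{BoundsMNF}, Item \ref{firstbound}, gives $m_j \leq n_k$ and hence $m_l \leq n_k$ for all $l$. For condition (2) I would fix $m_l \in {\sf N}\text{-}{\sf mod}(\psi_1)$ and set $n_i := \min\{n \in {\sf N}\text{-}{\sf mod}(\psi_0) : n \geq m_l\}$, which exists by condition (1). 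If $m_l \in {\sf N}\text{-}{\sf mod}(\psi_0)$ then $n_i = m_l$ and Proposition \ref{BoundsMNF}, Item \ref{secondbound}, gives $\be_l \leq \al_i = e^{n_i - m_l}(\al_i)$; if $m_l \notin {\sf N}\text{-}{\sf mod}(\psi_0)$ then Lemma \ref{monomialSeq}, Item \ref{monomialSeqNotInMod}, gives $\be_l \leq e^{n_i - m_l}(\al_i)$ directly. Either way condition (2) is met.

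For the right-to-left direction I would assume conditions (1) and (2) and, by the reduction above, derive $\psi_0 \vdash \la m_l^{\be_l}\ra\top$ for each $l$. Fixing $l$ and again setting $n_i := \min\{n \in {\sf N}\text{-}{\sf mod}(\psi_0) : n \geq m_l\}$ (nonempty since $m_l \leq m_j \leq n_k$), I would first extract the conjunct $\psi_0 \vdash \la n_i^{\al_i}\ra\top$ by Axiom \ref{conjel}. Writing $n_i = m_l + (n_i - m_l)$ and applying Axiom \ref{omega} gives $\la n_i^{\al_i}\ra\top \vdash \la m_l^{e^{n_i - m_l}(\al_i)}\ra\top$, which is trivial in the degenerate case $m_l = n_i$ where $e^0$ is the identity. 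Finally, since $\be_l \leq e^{n_i - m_l}(\al_i)$ by condition (2), monotonicity (Axiom \ref{mon}) gives $\la m_l^{e^{n_i - m_l}(\al_i)}\ra\top \vdash \la m_l^{\be_l}\ra\top$, and chaining these with the cut Rule \ref{r:2} finishes the derivation.

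The substantive difficulty is not located in this argument but in the results it consumes: the lower-bound halves of Proposition \ref{BoundsMNF} and Lemma \ref{monomialSeq} rest on Corollary \ref{NonDerivable}, which itself draws on arithmetical soundness and G\"odel's second incompleteness theorem. Granting those, the only points demanding care are the bookkeeping ones — verifying that the minimal base $n_i \geq m_l$ always exists (which is exactly what condition (1) secures) and treating the degenerate case $m_l = n_i$ uniformly so that the hyper-exponential $e^{n_i - m_l}(\al_i)$ collapses correctly to $\al_i$.
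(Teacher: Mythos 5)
Your proof is correct, and it follows exactly the route the paper intends: the paper itself states this proposition without an explicit proof, presenting it as a consequence of "the results established in the previous subsection," which are precisely the ingredients you use (Proposition~\ref{BoundsMNF} and Lemma~\ref{monomialSeq} for the left-to-right direction, and Axioms~\ref{conjel}, \ref{mon}, \ref{omega} with the rules for the right-to-left direction). Your explicit handling of the degenerate case $m_l = n_i$ and of the existence of the minimum $n_i$ fills in bookkeeping the paper leaves implicit.
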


With these two results, we are ready now to prove completeness of $\TPr$:

\begin{theorem} \label{Compltnss}
For any $\varphi, \psi \in \FLE$, if $\EA^+ \vdash \  \Th_{\psi} \subseteq \Th_{\varphi} $, then the sequent $\varphi \vdash \psi$ is derivable in $\TPr$. 
\end{theorem}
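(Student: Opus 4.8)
The plan is to establish the contrapositive, after first reducing both formulas to monomial normal form and then reducing the right-hand side to a single monomial. First I would use Theorem \ref{MNFTHEOREM} to pick $\varphi', \psi' \in {\sf MNF}$ with $\varphi \equiv \varphi'$ and $\psi \equiv \psi'$. By the already-proven soundness (Theorem \ref{soundness}) these $\TPr$-equivalences are arithmetically sound, so $\EA^+ \vdash \Th_\varphi \equiv \Th_{\varphi'}$ and $\EA^+ \vdash \Th_\psi \equiv \Th_{\psi'}$; hence the hypothesis $\EA^+ \vdash \Th_\psi \subseteq \Th_\varphi$ transfers to $\EA^+ \vdash \Th_{\psi'} \subseteq \Th_{\varphi'}$, and by transitivity (Rule \ref{r:2}) it suffices to derive $\varphi' \vdash \psi'$. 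Writing $\psi' = \la m_0^{\be_0}\ra\top \land \ldots \land \la m_j^{\be_j}\ra\top$, conjunction elimination (Axiom \ref{conjel}) gives $\psi' \vdash \la m_l^{\be_l}\ra\top$, so by soundness $\EA^+ \vdash \Th_{\la m_l^{\be_l}\ra\top} \subseteq \Th_{\psi'} \subseteq \Th_{\varphi'}$ for each $l$. If I can show each $\varphi' \vdash \la m_l^{\be_l}\ra\top$, then Rule \ref{r:1} reassembles $\varphi' \vdash \psi'$. Thus everything reduces to a monomial faithfulness claim: for $\varphi' := \MNF{0}{k} \in {\sf MNF}$ and a monomial $\la m^\be\ra\top$ with $\be > 0$, if $\EA^+ \vdash \Th_{\la m^\be\ra\top} \subseteq \Th_{\varphi'}$ then $\varphi' \vdash \la m^\be\ra\top$.

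For this claim I would argue by contraposition, so assume $\varphi' \not\vdash \la m^\be\ra\top$. By Lemma \ref{monomialSeq} together with Proposition \ref{BoundsMNF}, this happens exactly when either $m > n_k$, or $m \le n_k$ and $\be > e^{n_i - m}(\al_i)$, where $n_i := \min\{\, n \in {\sf N}\text{-}{\sf mod}(\varphi') : n \ge m \,\}$. In the second case put $\gamma := e^{n_i - m}(\al_i)$, and in the first case put $\gamma := 0$; in both cases $\gamma < \be$. The witnessing sentence will be $\chi := \con_m\big((\EA^+)_m^{\gamma}\big) \in \Pi_{m+1}$. Since $\gamma < \be$ we have $(\EA^+)_m^{\gamma + 1} \subseteq (\EA^+)_m^\be = \Th_{\la m^\be\ra\top}$, so $\Th_{\la m^\be\ra\top} \vdash \chi$; moreover $\chi$ is true, as all the theories involved are sound extensions of $\EA^+$. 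The argument then comes down to showing $\Th_{\varphi'} \not\vdash \chi$: granting this, in the standard model $\Box_{\Th_{\la m^\be\ra\top}}(\chi)$ holds while $\Box_{\Th_{\varphi'}}(\chi)$ fails, so $\Th_{\la m^\be\ra\top} \subseteq \Th_{\varphi'}$ is false in $\mathbb{N}$ and therefore not provable in the sound theory $\EA^+$, contradicting the hypothesis.

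The crux, and the place where I expect the real work, is establishing $\Th_{\varphi'} \not\vdash \chi$. For the case $m \le n_k$ I would use the conservation result of Proposition \ref{Conservativity} to replace $\Th_{\varphi'}$ by its $\Pi_{m+1}$-equivalent truncation $\bigwedge_{0 \le l \le i} \la n_l^{\al_l}\ra\top$, and then, combining Schmerl's formula from Proposition \ref{Schmerl} with the conservation techniques underlying Proposition \ref{SoundSch1} (now applied at level $m$ rather than at the least base), identify the exact $m$-consistency strength of $\Th_{\varphi'}$ as $\EA^+ \vdash \Th_{\varphi'} \equiv_m (\EA^+)_m^{\gamma}$. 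Since $\chi \in \Pi_{m+1}$, the assumption $\Th_{\varphi'} \vdash \chi$ would then give $(\EA^+)_m^{\gamma} \vdash \con_m\big((\EA^+)_m^{\gamma}\big)$, contradicting Gödel's second incompleteness theorem — precisely the phenomenon already isolated in Corollary \ref{NonDerivable}.

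The boundary case $m > n_k$ (where $\gamma = 0$ and $\chi = \con_m(\EA^+)$) is where the conservation computation degenerates, and I expect this to be the most delicate point. Here $\Th_{\varphi'}$ still has nontrivial $\Pi_{m+1}$-consequences (its axioms $\con_{n_l}(\cdot)$ lie in $\Pi_{n_l+1} \subseteq \Pi_{m}$), so one cannot simply claim $\Th_{\varphi'} \equiv_m \EA^+$. Instead I would argue directly that a sound, $\Pi_{n_k+1}$-axiomatized extension $U \supseteq \EA^+$ cannot prove $\con_{n_k+1}(\EA^+)$: if it did, then, provably, $U$ would also prove that each of its extra $\Pi_{n_k+1}$-axioms is true, whence $\con_{n_k+1}(\EA^+) \to \con_{n_k+1}(U)$ and so $U \vdash \con_{n_k+1}(U)$, against Gödel. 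Since $m > n_k$ means $m \ge n_k+1$, Item \ref{t4} of Lemma \ref{theorem:smallGoesInside} gives $\con_m(\EA^+) \to \con_{n_k+1}(\EA^+)$, so $\Th_{\varphi'} \not\vdash \con_m(\EA^+)$ as required. This completes the contrapositive in both cases, and hence the proof.
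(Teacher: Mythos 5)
Your overall skeleton --- reduce both sides to {\sf MNF}, split the right-hand side into monomials, contrapose, and convert the violated derivability bounds into a G\"odel-style contradiction --- matches the paper's proof. The genuine gap is in your main case $m \leq n_k$: the intermediate claim $\EA^+ \vdash \Th_{\varphi'} \equiv_m (\EA^+)_m^{\gamma}$, with $\gamma = e^{n_i - m}(\al_i)$, is simply false whenever $\varphi'$ contains monomials with base strictly below $m$, and the {\sf MNF} condition guarantees such monomials are never redundant. Concretely, take $\varphi' := \la 0^{\omega \cdot 2} \ra \top \land \la 1^1 \ra \top \in {\sf MNF}$ and $m = n_i = 1$, so $\gamma = \al_1 = 1$. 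By Lemma \ref{SoundSch2}, $\Th_{\varphi'} \equiv \big( (\EA^+)_1^1 \big)_0^1$, which proves the $\Pi_1$ (hence $\Pi_2$) sentence $\con_0\big( (\EA^+)_1^1 \big)$; but $(\EA^+)_1^1$ cannot prove that sentence, by G\"odel's second incompleteness theorem. So exactly the direction of $\Pi_{m+1}$-conservativity your argument needs --- that the $\Pi_{m+1}$-consequences of $\Th_{\varphi'}$ are contained in those of $(\EA^+)_m^{\gamma}$ --- fails; nor can this be repaired by adjusting the exponent, since the $\Pi_2$-consequences of $\Th_{\varphi'}$ lie strictly between those of $(\EA^+)_1^1$ and $(\EA^+)_1^2$, so no progression over $\EA^+$ matches them. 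The conservativity of Proposition \ref{SoundSch1} holds only at the \emph{least} base $n_0$, where the higher monomials can be absorbed via Schmerl's formula; above $n_0$ the low-base monomials contribute axioms of low complexity but high consistency strength. Note also that Proposition \ref{Conservativity} is a statement about $\TPr$-derivable consequences, not arithmetical $\Pi_{m+1}$-consequences, so it cannot be invoked for the truncation step as you describe.

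The paper escapes this trap by running the progression over $\Th_{\varphi}$ itself rather than over $\EA^+$: Lemma \ref{boundsImplyGodel} turns the violated bound into a $\TPr$-derivation $\la n_i^1 \ra \varphi \vdash \la n_i^2 \ra \varphi$, and soundness then converts the assumed inclusion $\Th_{\psi} \subseteq \Th_{\varphi}$ into $(\Th_{\varphi})_{n_i}^2 \subseteq (\Th_{\varphi})_{n_i}^1$, i.e.\ $(\Th_{\varphi})_{n_i}^1$ proves its own $n_i$-consistency; the low-base monomials are automatically carried along inside the base theory, which is exactly what your truncation loses. Your boundary case $m > n_k$ also has a smaller, repairable flaw: from $U \vdash \con_{n_k+1}(\EA^+)$ and the fact that $U$ proves each of its $\Pi_{n_k+1}$-axioms true \emph{individually}, you cannot conclude that $U$ proves the universal statement that \emph{all} of its axioms are true, which is what the implication $\con_{n_k+1}(\EA^+) \to \con_{n_k+1}(U)$ requires; for $\Th_{\varphi'}$ with a limit exponent $\al_l$ that universal statement amounts to $\con_{n_l}\big( (\EA^+)_{n_l}^{\al_l} \big)$ and is itself unprovable in $\Th_{\varphi'}$. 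This case can be fixed by compactness: some finite subtheory $\EA^+ + \sigma_1 + \ldots + \sigma_r$ already proves $\chi$, and iterating Item \ref{t1} of Lemma \ref{theorem:smallGoesInside} over the finitely many $\sigma_i$ yields the G\"odel contradiction. The case $m \leq n_k$, however, needs a genuinely different argument, such as the paper's.
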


\begin{proof} 
W.l.o.g., let $\varphi,\ \psi \in {\sf MNF}$, such that $\varphi := \MNF{0}{k}$ and $\psi := \la m_0^{\be_0} \ra \top \ \land \ \ldots \ \land \ \la m_j^{\be_j} \ra \top$. By contraposition, assume $\varphi \not \vdash \psi$. Hence by Theorem \ref{CharacterizatioMNFDeriv}, either:
\begin{itemize} 
\item $n_k < m_j$ or 
\item There is $m_l \in {\sf N}\text{-}{\sf mod}(\psi)$ and $n_i := \min \{ n \in {\sf N}\text{-}{\sf mod}(\varphi) : n \geq m_l  \}$, such that  $\be_l > e^{n_i - m_l} (\al_i)$.
\end{itemize}

Reasoning in $\EA^+$, towards a contradiction assume that $\Th_{\psi} \ \subseteq \ \Th_{\varphi}$. If $n_k < m_j$, observe that $(\EA^+)_{m_j}^{\be_j} \ \subseteq \ \Th_{\psi}$ and so, in particular  we have that $(\EA^+)_{m_j}^{\be_j} \ \subseteq \ \Th_{\varphi}$. Thus, applying Proposition \ref{boundsImplyGodel} Item \ref{BIG1} together with arithmetical soundness we obtain that $(\Th_{\varphi} )_{n_k}^{e^{m_j - n_k} (\be)}  \ \subseteq \ \big(\, \Th_{\varphi} + \con_{n_k} ( \Th_{\varphi} ) \, \big)$. Therefore,  
\[
\EA^+ \vdash \Box_{(\Th_{\varphi})_{n_k}^1} \con_{n_k} \big( \, (\Th_{\varphi})_{n_k}^1 \, \big).
\]
For the second case, we reason analogously but using Proposition \ref{boundsImplyGodel} Item \ref{BIG2}. 
\end{proof}

\section{Miscellanea}

In this section we collect some topics that we consider worth mentioning. We would like to include as a side note to this work.

\subsection{Modal Schmerl}

In this subsection we shall present a modal formulation of the Schmerl results introduced in Proposition \ref{Schmerl}. We give a modal proof without using the completeness result. A related point to consider is that under the arithmetical interpretation, this formulation and the Schmerl principles are not exactly the same.

\begin{proposition} For any $\varphi \in \mathbb{F}_{< n+2}$, the following principles are derivable $\TPr$:
\begin{enumerate}
\item $\la (n+1)^\al \ra \, \varphi \equiv_n \la n^{e(\al)} \ra \, \varphi$; \label{ModalSCH1}
\item $\la n^\al \ra \, \la (n+1)^\be \ra \, \varphi \equiv_n \la n^{e(\be) \cdot (1 + \al)} \ra \, \varphi$. \label{ModalSCH2}
\end{enumerate}
\end{proposition}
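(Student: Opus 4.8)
The plan is to first observe that item \ref{ModalSCH1} is the instance $\al = 0$ of item \ref{ModalSCH2}. Indeed, by the convention $\la n^0 \ra \varphi = \varphi$ and since $e(\be)\cdot(1+0) = e(\be)$, item \ref{ModalSCH2} read at $\al = 0$ becomes $\la (n+1)^\be \ra \varphi \equiv_n \la n^{e(\be)} \ra \varphi$, which is exactly item \ref{ModalSCH1} after renaming $\be$ to $\al$. So it suffices to prove item \ref{ModalSCH2} uniformly in $\al$. By Theorem \ref{MNFTHEOREM} together with the Lemma preceding Proposition \ref{Conservativity} I may assume $\varphi \in {\sf MNF}$: passing to an equivalent ${\sf MNF}$ preserves membership in $\mathbb{F}_{<n+2}$, and by Rule \ref{r:3} and the implication $\equiv\ \Rightarrow\ \equiv_n$ it changes neither side modulo $\equiv_n$.

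The next step is to reduce the claimed $\equiv_n$ to the agreement on monomial consequences. Any $\chi \in \mathbb{F}_{<n+1}$ is equivalent to an ${\sf MNF}$ of the form $\bigwedge_l \la m_l^{\gamma_l} \ra \top$ with each $m_l \leq n$, and by Axiom \ref{conjel} and Rule \ref{r:1} a formula derives $\chi$ iff it derives each conjunct; hence two formulas are $\equiv_n$ precisely when, for every $m \leq n$ and every $\gamma$, they agree on the derivability of $\la m^\gamma \ra \top$. For a formula $\Psi \in {\sf MNF}$ this is governed by Proposition \ref{CharacterizatioMNFDeriv} (via Proposition \ref{BoundsMNF} and Lemma \ref{monomialSeq}): $\Psi \vdash \la m^\gamma \ra \top$ iff $m$ does not exceed the largest base of $\Psi$ and $\gamma \leq e^{q-m}(\delta)$, where $\la q^\delta \ra \top$ is the monomial of $\Psi$ with $q = \min\{\, b \in {\sf N}\text{-}{\sf mod}(\Psi) : b \geq m \,\}$. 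Thus the whole statement reduces to computing the ${\sf MNF}$ of each side and reading off these bounds for all $m \leq n$.

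The computation itself is carried out with Schmerl's axiom (Axiom \ref{MS1}), Pseudo Schmerl (Proposition \ref{theorem:PseudoSchmerlOne} and Corollary \ref{PS2}) and Proposition \ref{modNF}. In the guiding case $\varphi = \top$, Schmerl's axiom yields $\la n^\al \ra \la (n+1)^\be \ra \top \equiv \la n^{e(\be)\cdot(1+\al)} \ra \top \land \la (n+1)^\be \ra \top$, and Proposition \ref{Conservativity} discards the conjunct $\la (n+1)^\be \ra \top$ modulo $\equiv_n$ (its base $n+1$ exceeds $n$), leaving exactly $\la n^{e(\be)\cdot(1+\al)} \ra \top$; matching the remaining bounds for $m<n$ uses only the composition law $e^{a}\circ e^{b} = e^{a+b}$. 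For general $\varphi \in {\sf MNF}$ one first brings $\la (n+1)^\be \ra \varphi$ into ${\sf MNF}$ by pulling $\la (n+1)^\be \ra$ over the smaller modalities of $\varphi$ with \hyperref[PS1]{(PS1)} (merging with a possible base-$(n+1)$ monomial of $\varphi$ via co-additivity), then prepends $\la n^\al \ra$ using \hyperref[PS1]{(PS1)}, \hyperref[PS2]{(PS2)} and Axiom \ref{MS1}, and finally compares the result against the ${\sf MNF}$ of $\la n^{e(\be)\cdot(1+\al)} \ra \varphi$ delivered by Proposition \ref{modNF}.

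I expect the main obstacle to be the bookkeeping in this last comparison, precisely when $\varphi$ already carries a modality of base $n$ or $n+1$: then the two leading modalities do not merely prepend but merge, via co-additivity respectively \hyperref[PS2]{(PS2)}, and one must check that the exponent of the surviving base-$n$ monomial, together with the perturbed exponents of the base-$<n$ monomials, turns out identical on both sides. The crucial identity behind this matching is that pulling $\la n^\al \ra$ over $\la (n+1)^\be \ra$ produces the Schmerl exponent $e^{(n+1)-n}(\be)\cdot(1+\al) = e(\be)\cdot(1+\al)$, which is exactly what appears on the right-hand side; everything else is repeated use of $e^{a}\circ e^{b} = e^{a+b}$ and the defining relation of ${\sf MNF}$ in Item \ref{item:TechnicalMNFclause} of Definition \ref{definition:MNFs}.
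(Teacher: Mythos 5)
Your reductions are sound and are essentially the ones the paper itself uses: Item \ref{ModalSCH1} as the $\al = 0$ instance of Item \ref{ModalSCH2}; replacing $\varphi$ by an equivalent ${\sf MNF}$ (legitimate by Theorem \ref{MNFTHEOREM}, Rule \ref{r:3}, and the Lemma preceding Proposition \ref{Conservativity}); and the observation that $\equiv_n$ amounts to agreement on consequences of the form $\la m^\gamma \ra \top$ with $m \leq n$, which Proposition \ref{CharacterizatioMNFDeriv} turns into a comparison of bounds. The guiding case $\varphi = \top$ is also correct. However, the proposal stops exactly where the content of the proposition begins: the general case, where $\varphi$ carries monomials of base $n$ and $n+1$, is only announced as ``bookkeeping'' and never carried out, and that bookkeeping is not routine --- it \emph{is} the statement being proved.

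Two concrete problems. First, the procedure as described does not run: pulling $\la (n+1)^\be \ra$ over $\varphi$ with \hyperref[PS1]{(PS1)} and co-additivity generally yields a conjunction of monomials that is \emph{not} in ${\sf MNF}$ (take $\varphi = \la n^{e(\delta)\cdot 2} \ra \top \land \la (n+1)^\delta \ra \top$ with $\delta = \be = 1$: the base-$n$ exponent becomes $\omega\cdot 3$, which is not a multiple of $e(\delta+\be) = \omega^2$), so \hyperref[PS2]{(PS2)}, which requires an ${\sf MNF}$ argument, cannot be applied next; one must first re-normalize via Lemma \ref{conNF}, and this changes exponents. Second, and more seriously, before such normalization the two sides produce genuinely different ordinals: a monomial of base $n_i < n$ acquires the tail $e^{n+1-n_i}(\be) + e^{n-n_i}(\al)$ on the left but $e^{n-n_i}\big(e(\be)\cdot(1+\al)\big)$ on the right; already for $n_i = n-1$ and $\al = \be = 1$ these are $\omega^\omega + \omega$ versus $\omega^{\omega\cdot 2}$. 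The two sides agree modulo $\equiv_n$ only because ${\sf MNF}$-normalization truncates both tails to $e^{n-n_i}(E)$ for a \emph{common} surviving base-$n$ exponent $E$, and proving that $E$ is indeed common and that the truncations line up is precisely the missing argument; the identity $e^{(n+1)-n}(\be)\cdot(1+\al) = e(\be)\cdot(1+\al)$ and the composition law $e^a \circ e^b = e^{a+b}$ do not supply it. Note that the paper sidesteps this symmetric computation entirely: it first derives the full sequent $\la n^\al \ra \la (n+1)^\be \ra \varphi \vdash \la n^{e(\be)\cdot(1+\al)} \ra \varphi$, which settles one half of $\equiv_n$ for all consequences at once, and for the converse half it expands only the left-hand side into monomials, discards the base-$(n+1)$ monomial using Proposition \ref{Conservativity}, and checks that the right-hand side derives what remains. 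As it stands, your proposal has a genuine gap at its core.
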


\begin{proof}
We only check Item \ref{ModalSCH2} since Item \ref{ModalSCH1} follows from Item \ref{ModalSCH2} by taking $\al = 0$. Let $\psi \in {\sf MNF}$ such that $\psi \equiv \varphi$. First we prove that the sequent 
\[
\la n^\al \ra \, \la (n+1)^\be \ra \, \varphi \vdash \la n^{e(\be) \cdot (1 + \al)} \ra \, \varphi
\] 
is derivable. By Theorem \ref{MNFTHEOREM} and monotonicity we have that:
\[
\begin{array}{ll}
\la n^\al \ra \, \la (n+1)^\be \ra \, \varphi \vdash &\Big( \, \bigwedge_{0 \leq i \leq k}  \, \la n_i^{\al_i + e^{n - n_i} ( e(\be) \cdot (1+\al)) } \ra \top  \, \Big) \ \land \\ [0.30cm]
& \hspace{0.60cm} \la n^{\gamma + e(\be) \cdot (1+\al)} \ra \top \ \land \ \la (n+1)^{\delta} \ra \top 
\end{array}
\]
where each $\la n_i^{\al_i} \ra \top$ occurs in $\psi$, and  $\gamma$ and $\delta$ might be $0$. Thus, by \hyperref[PS1]{(PS1)} we get that $\la n^\al \ra \, \la (n+1)^\be \ra \, \varphi \vdash \la n^{e(\be) \cdot (1 + \al)} \ra \, \varphi$.\\ 

For the other direction, consider $\chi \in \mathbb{F}_{n+1}$ such that $\la n^\al \ra \, \la (n+1)^\be \ra \, \varphi \vdash \chi$. Therefore, by \hyperref[PS1]{(PS1)} and \hyperref[PS2]{(PS2)} get the following:
\[
\begin{array}{ll}
\Big(\, \bigwedge_{0 \leq i \leq k} \la n_i^{\al_i + e^{n+1 - n_i} (\delta + \be) + e^{n - n_i} (\al) } \ra \top \, \Big) \ \land & \\ [0.30cm]
\hspace{1.20cm} \la n^{\gamma + e(\delta + \be) + e(\delta + \be) \cdot \al } \ra \top \ \land \  \la (n+1)^{\delta + \be} \ra \top&\vdash \chi
\end{array}
\] 
where again, each $\la n_i^{\al_i} \ra \top$ occurs in $\psi$, and  $\gamma$ and $\delta$ might be $0$. Thus,
\[
\la n_0^{\kappa_0} \ra \top \, \land \, \ldots \, \land \, \la n_k^{\kappa_k } \ra \top \, \land  \, \la n^{\gamma + e(\delta + \be) \cdot (1 +\al) } \ra \top \, \land \,  \la (n+1)^{\delta + \be} \ra \top \vdash \chi
\]
where $\kappa_k := \al_k + e^{n - n_k} \big(\, \gamma + e(\delta + \be) \cdot (1 +\al)\, \big)$ and $\kappa_i := \al_i + e^{n_{i+1} - n_i} (\kappa_{i+1})$. We can easily check that there is $\{ n_0, \ldots , n_j \} \subseteq {\sf N}\text{-}{\sf mod}\big(\, \la n_0^{\kappa_0} \ra \top \, \land \, \ldots \, \land \, \la n_k^{\kappa_k } \ra \top \, \big)$ \footnote{Observe that this subset may be empty.} such that:
\[
\la n_0^{\kappa_0} \ra \top \, \land \, \ldots \, \land \, \la n_j^{\kappa_j } \ra \top \, \land  \, \la n^{\gamma + e(\delta + \be) \cdot (1 +\al) } \ra \top \, \land \,  \la (n+1)^{\delta + \be} \ra \top \vdash \chi
\]
and $\la n_0^{\kappa_0} \ra \top \, \land \, \ldots \, \land \, \la n_j^{\kappa_j } \ra \top  \, \land  \, \la n^{\gamma + e(\delta + \be) \cdot (1 +\al) } \ra \top \, \land \,  \la (n+1)^{\delta + \be} \ra \top \in {\sf MNF}$. Let $\chi' \in {\sf MNF}$ such that $\chi' \equiv \chi$. Then, with the help of Proposition \ref{Conservativity} we get that:
\[
\la n_0^{\kappa_0} \ra \top \, \land \, \ldots \, \land \, \la n_j^{\kappa_j } \ra \top  \, \land  \, \la n^{\gamma + e(\delta + \be) \cdot (1 +\al) } \ra \top \vdash \chi'.
\]
Finally, notice that 
\[
\la n^{e(\be) \cdot (1 + \al)} \ra \, \psi \vdash \la n_0^{\kappa_0} \ra \top \, \land \, \ldots \, \land \, \la n_k^{\kappa_k } \ra \top \, \land  \, \la n^{\gamma + e(\delta + \be) \cdot (1 +\al) } \ra \top .
\]
This way, $\la n^{e(\be) \cdot (1 + \al)} \ra \, \psi \vdash \la n_0^{\kappa_0} \ra \top \, \land \, \ldots \, \land \, \la n_j^{\kappa_j } \ra \top  \, \land  \, \la n^{\gamma + e(\delta + \be) \cdot (1 +\al) } \ra \top $ and so 
\[
\la n^{e(\be) \cdot (1 + \al)} \ra \, \psi \vdash \chi'
\] 
i.e. $\la n^{e(\be) \cdot (1 + \al)} \ra \, \varphi \vdash \chi$.
\end{proof}

In the proof of the previous Proposition the assumption of $\varphi \in \mathbb{F}_{<n+2}$ is used to bound the greatest monomial occurring in the corresponding {\sf MNF} of $\varphi$. As we know, an homologous condition is present in the original arithmetical formulation. With the help of Corollary \ref{NonDerivable} it is easy to find some examples where the previous result does not hold when dropping this assumption. Consider the following counterexample: let $\varphi := \la 0^{\omega^\omega \cdot 2} \ra \top \, \land \, \la 2^1 \ra \top$. Thus, 
\[
\la 0^1 \ra \, \la 1^1 \ra \, \varphi \equiv \la 0^{\omega^{\omega \cdot 2} \cdot 2 } \ra \top \, \land \, \la 1^{\omega \cdot 2} \ra \top \, \land \, \la 2^1 \ra \top.
\] 
On the other hand, 
\[
\la 0^{\omega \cdot 2} \ra \, \varphi \equiv \la 0^{\omega^{\omega + 1} \cdot 2} \ra \top \, \land \, \la 2^1 \ra \top.
\] 
Therefore we have that $\la 0^1 \ra \, \la 1^1 \ra \, \varphi \vdash \la 0^{\omega^{\omega \cdot 2} \cdot 2 } \ra \top$ but $\la 0^{\omega \cdot 2} \ra \, \varphi \not \vdash \la 0^{\omega^{\omega \cdot 2} \cdot 2 } \ra \top $.

\subsection{Admissibility of Infinitary Rules}

In previous versions of this work, we gave a gave a presentation of $\TPr$ with the following infinitary rule:
$$\text{If for all } \be < \lambda \in \lim, \  \varphi \vdash \la n^\be \ra \, \psi \text{ then } \varphi \vdash \la n^\lambda \ra \, \psi .$$

Although the rule is arithmetically valid and allows us to give more simple version of some of the axioms, with the current presentation of $\TPr$ is not necessary the inclusion of this rule.

\section*{Acknowledgements}

The authors wish to thank Lev Beklemishev and Andr\'{e}s Cord\'{o}n Franco for their technical suggestions during the writting of this work.

\nocite{*}
\bibliographystyle{plain}
\bibliography{ref.bib}

\end{document}